\setlist[enumerate, 1]{label=(\roman*)}
\setlist[enumerate, 2]{label=(\alph*)}
\setlist[enumerate, 3]{label=(\arabic*)}
\theoremstyle{plain}
\newtheorem{theorem}{Theorem}[section]
\theoremstyle{definition}
\newtheorem{definition}[theorem]{Definition}
\theoremstyle{plain}
\newtheorem{lemma}[theorem]{Lemma}
\theoremstyle{plain}
\newtheorem{proposition}[theorem]{Proposition}
\theoremstyle{plain}
\newtheorem{corollary}[theorem]{Corollary}
\theoremstyle{definition}
\theoremstyle{remark}
\newtheorem{remark}[theorem]{Remark}
\theoremstyle{definition}
\newtheorem{assumption}[theorem]{Assumption}
\numberwithin{equation}{section}
\renewcommand{\theequation}{\arabic{section}.\arabic{equation}}
\newcommand{\calH}{\mathcal{H}} 
\newcommand{\bbI}{\mathbb{I}} 
\newcommand{\calE}{\mathcal{E}} 
\newcommand{\bbG}{\mathcal{G}} 
\newcommand{\bbK}{\mathcal{K}} 
\newcommand{\calR}{\mathcal{R}} 
\newcommand{\scrH}{\mathscr{H}} 
\newcommand{\sfH}{{\mathsf{H}}} 
\newcommand{\calF}{\mathscr{E}} 
\newcommand{\scrR}{\mathscr{R}} 
\newcommand{\scrG}{\mathscr{G}} 
\newcommand{\subD}{\pa} 
\newcommand{\subDF}{\pa^\mathrm{F}} 
\newcommand{\subDG}{\pa^\mathrm{G}} 
\newcommand{\subDL}{\pa^\ell} 
\newcommand{\Hdav}{{(\HH^1(\Omega))_\mathrm{av}^*}} 
\newcommand{\Havd}{{(\Hav^1(\Om))^*}} 
\newcommand{\calP}{\mathcal{P}} 
\newcommand{\AClocZ}{\ACloc^2([0,\infty);\calH_m)}
\newcommand{\bfu}{\boldsymbol{u}} 
\newcommand{\bfv}{\boldsymbol{v}} 
\newcommand{\bfmu}{\boldsymbol{\mu}} 
\newcommand{\olbfu}{\overline{\boldsymbol{u}}} 
\newcommand{\olz}{\overline{z}}
\newcommand{\olu}{\overline{u}}
\newcommand{\whbfu}{\widehat{\boldsymbol{u}}} 
\newcommand{\whz}{\widehat{z}}
\newcommand{\whu}{\widehat{u}}
\newcommand{\ulbfu}{\underline{\boldsymbol{u}}} 
\newcommand{\ulu}{\underline{u}}
\newcommand{\whbfmu}{\widehat{\bfmu}} 
\newcommand{\whE}{\widehat{E}} 
\newcommand{\Om}{\Omega} 
\newcommand{\n}{\mathrm{n}} 
\newcommand{\fraka}{\mathfrak{a}} 
\newcommand{\calECH}{\mathcal{E}_\mathrm{GL}} 
\newcommand{\ve}{\varepsilon} 
\newcommand{\vek}{{\ve_k}} 
\newcommand{\calRCH}{\calR_\mathrm{CH}}
\newcommand{\calRvCH}{\calR_\mathrm{vCH}}
\newcommand{\calRmAC}{\calR_\mathrm{mAC}}
\providecommand{\subjclass}[1]
{{\small\text{\textit{MSC 2020: }} #1}
}
\providecommand{\keywords}[1]
{{\small\text{\textit{Keywords and phrases: }} #1}
}
\title{Well-posedness and relaxation in a simplified model for viscoelastic phase separation via Hilbertian gradient flows
}
\author{Moritz Immanuel Gau\footnote{Weierstrass Institute for Applied Analysis and Stochastics, Mohrenstraße 39, 10117 Berlin, Germany, 
\\Email: \texttt{moritzimmanuel.gau@wias-berlin.de}; \texttt{katharina.hopf@wias-berlin.de}} \and Katharina Hopf$^*$}
\date{}
\begin{document}
\maketitle

\begin{abstract}
    This article is concerned with a gradient-flow approach to a 
    Cahn--Hilliard model for viscoelastic phase separation
    introduced by Zhou et al.\ (Phys.\ Rev.\ E, 2006) in its variant with constant mobility.
    By means of time-incremental minimisation and generalised contractivity estimates, we establish the global well-posedness 
    of the Cauchy problem for moderately regular initial data. For general finite-energy data we obtain the existence of gradient-flow solutions and a stability estimate of weak--strong type. 
    We further study the asymptotic behaviour for relaxation time and bulk modulus depending on a small parameter. 
    Depending on the scaling, we recover the Cahn--Hilliard, the mass-conserving Allen--Cahn or the viscous Cahn--Hilliard equation.
    A challenge in the well-posedness analysis is the failure of semiconvexity of the appropriate driving functional, which is caused by a phase-dependence of the bulk modulus.
\end{abstract}

\keywords{Cahn--Hilliard equation, viscoelastic phase separation, gradient systems, Cauchy problem, convexity, uniqueness, minimising movements, relaxation limit.
}

\subjclass{
35M33, 
35G50, 
35A15, 
47H05, 
35B40, 
35Q92, 
76A10. 
}

\section{Introduction}

Phase separation in a homogeneous binary mixture can occur when the system is quenched to low temperature in a way that a state close to one of the pure phases becomes energetically favourable.
If the constituents of the mixture have different intrinsic time and length scales, the unmixing process can exhibit dynamical asymmetry, a phenomenon typical for {\em viscoelastic phase separation} (VPS) in polymer--solvent mixtures.
A first phenomenological continuum model for VPS in polymer solutions that adheres to thermodynamic principles was introduced by Zhou et al.~\cite{Zhou_2006}, building on earlier modelling work by Tanaka and co-workers. For further background on the modelling, we refer to~\cite{Tanaka_2022} and the references therein.

In this paper, we study a gradient-flow model for viscoelastic phase separation in polymer solutions, which corresponds to a version of the dissipative system in~\cite[Equation (9)]{Zhou_2006} with a constant mobility (normalised to 1).
Given a bounded domain $\Om\subset\R^d$, $1\le d\le 3$, the equations read as follows:
\begin{subequations}
	\makeatletter
	\def\@currentlabel{\textsf{VPS}}
	\makeatother
	\label{EQ:VPS_original}
	\renewcommand{\theequation}{\textsf{VPS}.\alph{equation}}
	\begin{align}\label{EQ:VPS_original_volume_equation}
		\left\{\begin{alignedat}{3}
			\dot u&=\divv\Bigparen{\nabla\Bigparen{\frac{\delta\calF}{\delta u}-A(u)q}}\quad&&\text{in }(0,\infty)\times\Om,\\
			\dot q&=-\frac{1}{\tau(u)}q-A(u)\divv\Bigparen{\nabla\Bigparen{\frac{\delta\calF}{\delta u}-A(u)q}}\quad&&\text{in }(0,\infty)\times\Om.
		\end{alignedat}\right.
	\end{align}
	Here, $\frac{\delta\calF}{\delta u}$ denotes the variational derivative with respect to $u$ of the total free energy
	\begin{equation}\label{EQ:energy_original}
		\calF(u,q)=\int_\Om\Bigparen{\frac{1}{2}\abs{\nabla u}^2+F(u)}\dd x+\frac{1}{2}\int_\Om q^2\dd x,
	\end{equation}
	where, for simplicity, fixed parameters are normalised.
	Denoting by $\n:\boundary{\Om}\to S^{d-1}$ the outward unit normal to $\boundary{\Om}$, system \eqref{EQ:VPS_original_volume_equation} is complemented by no-flux boundary conditions
	\begin{equation}
		\nabla\Bigparen{\frac{\delta\calF}{\delta u}-A(u)q}\cdot\n=\nabla u\cdot\n=0\quad\text{on }(0,\infty)\times\boundary{\Om}
	\end{equation}
	and initial values $u(0)=u^0$, $q(0)=q^0$.
\end{subequations}
The function $u:[0,\infty)\times\Om\to\R$ denotes the \textit{phase field variable}, modelling the concentration of the polymer molecules (i.e.\ $u\approx0$ indicates solvent-rich regions and $u\approx1$ polymer-rich regions). The other unknown is the bulk stress $qI_d$, where we seek for the scalar variable $q:[0,\infty)\times\Om\to\R$, hereafter referred to as the \textit{scalar bulk stress}. Moreover, $A$ models the bulk modulus, $\tau$ denotes the relaxation time associated with the bulk stress quantity $q$, and we let $F$ be a regular potential with $f:=F'$. A typical choice is the double well potential $F(u)=u^2(u-1)^2$.
Our particular interest lies in the case of phase-dependent $A$ and $\tau$, which can cause a dynamic asymmetry between solvent-rich and polymer-rich regions.
This is reflected in \eqref{EQ:VPS_original} through the concentration flux -- which in addition to the gradient of the chemical potential $\nabla(\frac{\delta\calF}{\delta u})$ has the extra term $\nabla(A(u)q)$ -- and through the relaxation rate $\frac{1}{\tau(u)}$ of the scalar bulk stress.
These simplified systems are able to reproduce key features of VPS related to dynamic asymmetry like \textit{volume shrinking} and \textit{phase inversion}, as confirmed by various numerical simulations~\cite{Zhou_2006, Ding_Liu_Liu_2020, Spiller_et_al_2021, Brunk_Egger_Habrich_Lukacova_2024}. 

This paper is devoted to a rigorous gradient-flow formulation 
of~\eqref{EQ:VPS_original} and the development of a well-posedness theory for the  associated Cauchy problem, as well as to the derivation of relaxation limits, assuming that $A=A_\ve$ and $\tau=\tau_\ve$ depend on a small parameter $0<\ve\ll1$.
In this context, let us mention the related \textit{viscous Cahn--Hilliard equation} (vCH), introduced by Novick-Cohen~\cite{Cohen_1988} to model polymeric systems with viscous effects and analysed by Elliott and Stuart~\cite{Elliott_Stuart_1996}.
Mathematically, vCH serves as a homotopy interpolating between the Cahn--Hilliard equation (CH) and the mass-conserving Allen--Cahn equation (mAC) proposed by Rubinstein--Sternberg~\cite{Rubinstein_Sternberg_1992}. 
The three models CH, vCH, and mAC will reappear in the relaxation limit.

\subsection{Related literature}\label{SUBSEC:literature}

We briefly review the literature on the analysis of related VPS models and the gradient-flow theory relevant to our study, and discuss our contribution in this context.

The existence of global-in-time weak solutions in 2D for a stress-diffusive version of the full fluid VPS system in~\cite{Zhou_2006} -- involving the incompressible Navier--Stokes equations and an evolution equation for the stress tensor -- was established by Brunk and Luk{\'a}{\v c}ov{\'a}-Medvid'ov{\'a} in~\cite{Brunk_Lukacova_PartII_2022} both in the case of regular  mobilities and smooth potentials as well as in the case of degenerate mobilities and logarithmic potentials by means of the Faedo--Galerkin technique. (See also~\cite{Brunk_Lukacova_PartI_2022} for a previous result.)
In the subsequent articles~\cite{Brunk_Lukacova_RelativeEnergyWeakStrongUniqueness_2023,Brunk_2023}, a relative entropy method was used to establish weak--strong stability estimates (for $d\in\{2,3\}$), which are of conditional type in the 3D case.
An error analysis via relative entropies of a structure-preserving discrete finite-element scheme for the simplified VPS model without hydrodynamic transport (still in the presence of bulk stress diffusion) was performed in~\cite{Brunk_Egger_Habrich_Lukacova_2024} under the assumption of a smooth solution to the continuous problem. 
We would also like to point out the work by Colli et al.~\cite{Colli_et_al_1999}, who considered a non-isothermal phase-field system with memory, where temperature accounts for an extra contribution in the concentration flux. Despite the different physical context, this model has a parabolic--ODE structure at the PDE level that is related to that of~\eqref{EQ:VPS_original} (cf.~\eqref{EQ:VPS_new_variables} below), even though confined to a phase-independent constant coupling.

Gradient-flow methods have proved a powerful tool in the analysis of Cahn--Hilliard equations and related systems. 
The most classical result concerns the global well-posedness of the Cauchy problem for semiconvex, lower semicontinuous driving functionals (or ``energies") in Hilbert spaces~(\cite{Brezis_OperateursMaximauxMonotones_1973},\cite{Komura_1967}), see also~\cite[Introduction]{Muratori_Savare_2020} for a short review, or~\cite{Ambrosio_Brue_Semola_LecturesOptimalTransport_2024}. 
In~\cite{Rossi_Savare_2006}, Rossi and Savar\'e relax the hypothesis of semiconvexity using, as a key ingredient, Hilbert space-valued Young measures: under a compactness condition on the energy, they develop an existence theory, via minimising movements, for a generalisation of the gradient-flow equation involving a closure of the subdifferential,  provided that this limiting subdifferential is either convex-valued or satisfies a suitable chain rule.
In general, this leads to a very weak solution concept, where uniqueness may not be expected. The dissipation mechanism in their approach is still the classical inner product of the underlying Hilbert space.
An extension to state-dependent dissipation in the context of quasi-linear non-degenerate Cahn-Hilliard--Allen-Cahn systems was carried out by Heida~\cite{Heida_2015}. Again, this work  focusses on the existence of generalised solutions.
For an extension of the variational methods in~\cite{Rossi_Savare_2006} to a Banach space setting and some refinements, we refer to~\cite{Mielke_Rossi_Savare_2013} and to the review in~\cite{Mielke_lectureNotes_2023} for further literature.
For some recent developments in the Hilbertian setting, see e.g.~\cite{HS_2024_Mullins-Sekerka} in the context of sharp interface evolutions.
Closer in spirit to PDE analysis, there is also an increasing body of literature using gradient-flow techniques for the construction of global weak solutions to Cahn--Hilliard type models both in the non-degenerate case, see e.g.~\cite{Rocca_Scala_2017, Garcke_Knopf_2020} for constant mobility, and in the case of a degenerate mobility~\cite{LMS_2012}.
Note that, in the degenerate case, the gradient structure is of metric type based on generalised Wasserstein distances.

While, at a purely formal level, the gradient-flow structure of versions of the simplified model~\cite[Equation (9)]{Zhou_2006} was exploited in~\cite{Hopf_2024}, to the best of the authors' knowledge, the present work is the first to develop a rigorous gradient-flow interpretation for such a system. At the PDE level, our results imply an (unconditional) well-posedness theory without stress diffusive modification in a setting of moderately low regularity.
With regard to methodology, a specific feature of the present problem lies in the simultaneous failure of compactness and semiconvexity of the energy, which renders the analysis non-standard.
However, observing that semiconvexity only fails to a limited extent, determined by the state-dependence of the bulk modulus, we are able to leverage ideas from~\cite[Section~4]{Ambrosio_Gigli_Savare_GradientFlows_2005} and~\cite{Brezis_OperateursMaximauxMonotones_1973}, extending them to a ``slightly non-semiconvex" situation. This is enabled by combining gradient-flow with PDE methods, which provide the required extra regularity. 
Outside the semiconvex case, a systematic application of the gradient-flow theory to the study of (unconditional) uniqueness properties and stability estimates does not appear to have been explored extensively in the literature. In a very broad sense, our approach may be seen as reminiscent to the Wasserstein gradient-flow problem~\cite{BMZ_2023}, where a singular cross-diffusive perturbation is introduced that destroys uniform displacement convexity of the energy, but where exponential convergence rates to equilibrium can still be recovered under a smallness condition. 
Let us mention that an analysis of the gradient structure for VPS in the case of a degenerate mobility will be part of a future work.
Our results on the relaxation limit furnish a rigorous link between~\eqref{EQ:VPS_original} and classical models for phase separation, the Cahn--Hilliard, the viscous Cahn--Hilliard, and the mass-preserving Allen--Cahn equation and extend the latter two to dynamically asymmetric versions. 

\subsection{Outline}
The manuscript is structured as follows.
Section~\ref{SEC:the_model} begins with a general introduction to gradient systems with a Hilbert space structure (Subsection~\ref{SUBSEC:basic_concepts}) and an outline of our strategy (Subsection~\ref{SUBSEC:strategy}). The remaining part (Subsection~\ref{SUBSEC:main_results}) is devoted to introducing our set-up, the general hypotheses as well as the main results. 
In Section~\ref{SEC:preliminaries}, we establish fundamental functional analytic results including estimates reminiscent to those in a semiconvex setting and prove a chain rule.
These results will be crucial in the subsequent analysis. Section~\ref{SEC:well_posedness} is devoted to the proofs of the main results regarding the well-posedness properties of \eqref{EQ:VPS_original}.
Finally, in Section~\ref{SEC:relaxation_limit} we prove the main results concerning the relaxation limit.

\section{Main results}\label{SEC:the_model}

\subsection{Basic concepts for Hilbertian gradient systems}\label{SUBSEC:basic_concepts}
We briefly review basic facts and notions, confining ourselves to the aspects most relevant to the present application. For further developments, we refer the interested reader to~\cite{Mielke_Rossi_Savare_2013, Peletier_lectureNotes_2014, Mielke_Montefusco_Peletier_2021, Mielke_lectureNotes_2023, Schmeller_Peschka_2023}.

\begin{definition}[Gradient system]\label{DEF:gradient_system}
    We call a triple $(\calH_m,\calE,\calR)$ a \textit{(Hilbertian) gradient system} if
    \begin{enumerate}
        \item $\calH_m$ is an affine space over a Hilbert space denoted by $H$,
        \item $\calE:\calH_m\to[0,+\infty]$ a functional with proper domain $\dom(\calE):=\set{\bfu\in\calH_m}{\calE(\bfu)<\infty}$,
        \item $\calR:\dom(\calE)\times H\to[0,\infty)$ can be represented as $\calR(\bfu;\bfv)=\frac{1}{2}\product{\bbG(\bfu)\bfv}{\bfv}_{H^*,H}$ for a family of linear, bounded, symmetric and positive definite operators $\bbG(\bfu):H\to H^*$, $\bfu\in\dom(\calE)$.
    \end{enumerate}
\end{definition}

We immediately see that $\bfv\mapsto\calR(\bfu;\bfv)$ is a positive bounded quadratic form with Fr\'echet differential $\D_{\bfv}\calR(\bfu;\bfv)=\bbG(\bfu)\bfv$ for $(\bfu,\bfv)\in\dom(\calE)\times H$.

Gradient systems are used to describe dissipative evolution equations. The affine space $\calH_m$ then represents the state space, $\calE$ is the driving functional, which in our present model will be the physically relevant free energy, and $\calR$ is called the dissipation potential, describing the dissipation mechanism of the system. The evolution equation associated to a gradient system $(\calH_m,\calE,\calR)$ is the \textit{gradient-flow equation}
\begin{align}\label{EQ:gfe_general_definition}
	-\bbG(\bfu(t))\dot\bfu(t)\in\subDG\calE(\bfu(t))\quad\text{for a.e. }t\in(0,\infty)\tag{\textsf{GFE}}
\end{align}
for absolutely continuous functions $\bfu:(0,\infty)\to\calH_m$, where $\subDG\calE$ denotes the Gateaux subdifferential
\begin{equation}\label{EQ:gateaux_subdifferential_definition}
	\subDG\calE(\bfu)=\biggset{\bfmu\in H^*}{\forall\bfv\in H:\liminf_{t\searrow0}\frac{\calE(\bfu+t\bfv)-\calE(\bfu)}{t}\geq\product{\bfmu}{\bfv}_{H^*,H}}.
\end{equation}
Note that, for $\bfu\in H$ fixed, $\subDG\calE(\bfu)\subset H^*$ is a closed and convex (possibly empty) subset. 
In classical applications, \eqref{EQ:gfe_general_definition} is often formulated using the Fr\'echet subdifferential
\begin{equation}\label{EQ:frechet_subdifferential_definition}
    \subDF\calE(\bfu)=\biggset{\bfmu\in H^*}{\liminf_{\bfv\to\bfu}\frac{\calE(\bfv)-\calE(\bfu)-\product{\bfmu}{\bfv-\bfu}_{H^*,H}}{\norm{\bfu-\bfv}_H}\geq0}.
\end{equation}
However, the analysis of the present manuscript relies on the more general version~\eqref{EQ:gateaux_subdifferential_definition}, which reduces to~\eqref{EQ:frechet_subdifferential_definition} if the state $\bfu$ enjoys some extra regularity of (cf.~Proposition~\ref{PRO:subdifferential} (ii)).

\begin{definition}[Gradient-flow solution]\label{DEF:gf_solution}
    Let $(\calH_m,\calE,\calR)$ be a gradient system. We call an absolutely continuous curve $\bfu:(0,\infty)\to\calH_m$ a \textit{gradient-flow solution}, in short \textit{GF solution}, if it satisfies \eqref{EQ:gfe_general_definition}.
\end{definition}

A desirable feature of gradient-flow solutions, which holds true in the smooth case, is the \textit{energy--dissipation balance} (EDB). A pair $(\bfu,\bfmu)\in\AClocZ\times\Lloc^2([0,\infty);H^*)$ is said to satisfy the EDB if
\begin{align}\label{EQ:edb_general_definition}
	\calE(\bfu(t))+\int_s^t\bigparen{\calR(\bfu(r);\dot\bfu(r))+\calR^*(\bfu(r);\bfmu(r))}\dd r=\calE(\bfu(s)),\quad0\leq s\leq t<\infty,\tag{\textsf{EDB}}
\end{align}
where $\calR^*(\bfu;\bfmu):=\sup_{\bfv\in H}\bigparen{\product{\bfmu}{\bfv}_{H^*,H}-\calR(\bfu;\bfv)}$ denotes the Legendre--Fenchel conjugate of $\calR$, and $\ACloc^2(I;\calH_m)$ for a general interval $I\subseteq[0,\infty)$ is defined as
\begin{align*}
    \ACloc^2(I;\calH_m):=\bigset{\bfu:I\to\calH_m\text{ locally absolutely continuous}}{\dot\bfu\in \Lloc^2(I;H)}.
\end{align*}
For instance, \eqref{EQ:edb_general_definition} may play the role of the second law of thermodynamics in isothermal systems, so as in our model \eqref{EQ:VPS_original}. From a technical point of view, \eqref{EQ:edb_general_definition} (or \eqref{EQ:edi_general_definition} below) is a practical identity to derive a priori estimates in certain scenarios, as in the proof of Theorems~\ref{TH:existence_extended_initials}~and~\ref{TH:relaxation_limit}.

To ensure \eqref{EQ:edb_general_definition}, one typically shows a chain rule of the following type (cf., e.g.~\cite[$(2.\mathrm{E}_4)$]{Mielke_Rossi_Savare_2013} or~\cite[Definition 3.3.1]{Mielke_2016}): If $\bfu\in\AClocZ$ and $\bfmu\in\Lloc^2([0,\infty);H^*)$ are such that $\bfmu(t)\in\subDG\calE(\bfu(t))$ for almost all $t\in(0,\infty)$, then $\calE\circ\bfu\in\AC([0,\infty);\R)$ and
\[\ddd{t}(\calE\circ\bfu)(t)=\product{\bfmu(t)}{\dot\bfu(t)}_{H^*,H}\quad\text{for a.e. }t\in(0,\infty).\]
Thus, if $\bfu$ additionally satisfies~\eqref{EQ:gfe_general_definition}, then such a chain rule can be applied to $\bfmu:=-\bbG(\bfu)\dot\bfu$ and then Legendre--Fenchel equivalences~\cite[Theorem 3.32]{Rindler_2018}
\begin{equation}\label{EQ:fenchel_equivalences}
    \product{\bbG(\bfu)\dot\bfu}{\dot\bfu}_{H^*,H}=\calR(\bfu;\dot\bfu)+\calR^*(\bfu;\bfmu)=\product{\bfmu}{\bbK(\bfu)\bfmu}_{H^*,H}
\end{equation}
imply \eqref{EQ:edb_general_definition}.

In the absence of smoothness and convexity, an appropriate chain rule may not always be established. However, in such cases, it is often still possible to prove the weaker \textit{energy-dissipation inequality} (EDI)
\begin{align}\label{EQ:edi_general_definition}
	\calE(\bfu(t))+\int_0^t\bigparen{\calR(\bfu(r);\dot\bfu(r))+\calR^*(\bfu(r);\bfmu(r))}\dd r\leq\calE(\bfu(0)),\quad t\in[0,\infty).\tag{\textsf{EDI}}
\end{align}
It is worth noting that, if a suitable chain rule holds after all, the formulations \eqref{EQ:gfe_general_definition}, \eqref{EQ:edb_general_definition} and \eqref{EQ:edi_general_definition} are in fact equivalent; see, e.g., \cite[Theorem 3.3.1]{Mielke_2016}. This is known as the \textit{energy-dissipation principle}, which is based on ideas from De Giorgi~\cite{DeGiorgi_1980, DeGiorgi_1993}.

\subsection{Our strategy}\label{SUBSEC:strategy}

\paragraph{Motivation}
If $A$ and $\tau$ are constant, we can apply the well-known theory of gradient flows of $\lambda$-convex ($\lambda\in\mathbb R$), lower semicontinuous functionals in Hilbert spaces mentioned in Subsection~\ref{SUBSEC:literature}.
This guarantees global-in-time existence and uniqueness of gradient-flow solutions, also in the sense of evolution variational inequalities (EVI), and the associated $\lambda$-contractive continuous semiflow extends to data in the closure of $\dom(\mathcal E)$.
To apply the theory to the present situation, consider
\begin{equation}\label{EQ:hilbert_space_definition_constant_case}
	\sfH:=\Bigset{(u,q)\in\Havd\times(\HH^1(\Om))^*}{Au+q\in\LL^2(\Om)}
\end{equation}
(see \eqref{EQ:averaged_spaces_definition} -- \eqref{EQ:dual_average_isoiso} for details to $\Hav^1(\Om)$ and $\Havd$) and endow $\sfH$ with the scalar product
\[
    \bigparen{(u_1,q_1),(u_2,q_2)}_\sfH:=\product{u_1}{(-\Delta)^{-1}u_2}_{\Havd,\Hav^1(\Om)}+\tau(Au_1+q_1,Au_2+q_2)_{\LL^2(\Om)}.  
 \]
Moreover, put $\scrH_m:=\Hdav+(m,0)$ for some $m\in\R$. Then, under suitable conditions on $F$ (for instance as in Assumption~\ref{ASS:general}), the energy $\calF$ defined in \eqref{EQ:energy_original} takes proper values on the dense subset $(\Hav^1(\Om)+m)\times\LL^2(\Om)\subset\scrH_m$ and is semiconvex with respect to $\norm{\cdot}_\sfH$.
With the (state-independent) dissipation potential $\scrR(\bfu;\bfv):=\frac{1}{2}\norm{\bfv}_\sfH^2=\frac{1}{2}\product{\bbI_\sfH\bfv}{\bfv}_{\sfH^*,\sfH}$, the associated gradient-flow equation of $(\scrH_m,\calF,\scrR)$ becomes $-\bbI_\sfH\dot\bfu\in\subDG\calF(\bfu)$, which, at the PDE level, coincides with~\eqref{EQ:VPS_original}. Existence and semi-contractivity of the gradient flow defined for initial values in the entire space $\scrH_m$ is then a consequence of the classical theory.

\paragraph{Change of variables in phase-dependent case}

For phase-dependent $A$ and $\tau$,  the problem still exhibits a formal gradient structure with driving functional $\calF$ and dissipation mechanism 
 \[\scrG(u)=\begin{pmatrix}
    (-\Delta)^{-1}+A^2(u)\tau(u)&A(u)\tau(u)
    \\A(u)\tau(u)&\tau(u)
\end{pmatrix}.\]
However, the question of determining an appropriate functional setting in the variables $(u,q)$ is more involved, since the regularity of the cross terms (corresponding to the off-diagonal terms in $\scrG(u)$) now depends on that of $u$.
To cope with such coupling, we consider the new variable $z:=q+K(u)$, where $K'=A,K(0)=0$, i.e.\ we introduce the change of variables $\Psi(u,z):=(u,z{-}K(u))$. This leads to the new gradient system $(\calH_m,\calE,\calR)$ with the pulled-back driving functional $\calE(\bfu)=\calF(\Psi(\bfu))$, $\bfu:=(u,z)$, and dissipation mechanism $\bbG(\bfu)=\D\Psi(\bfu)^T\scrG(u)\D\Psi(\bfu)$, which amounts to
\begin{align}\label{EQ:dissipation_mechanism_new_variables}
	\bbG(\bfu)=\begin{pmatrix}
		(-\Delta)^{-1}&0\\
        0&\tau(u)
    \end{pmatrix}.
\end{align}
Note that the dissipation mechanism in the new variables is diagonal and induces a simple functional setting $\Havd\times\LL^2(\Omega)$ whenever $\tau(u)\sim 1$.

In PDE form, the associated gradient-flow equation $\bbG(\bfu)\dot\bfu\in -\subDG\calE(\bfu)$ reads as
\begin{equation*}\tag{$\mathsf{VPS}'$}\label{EQ:VPS_new_variables}
	\left\{\begin{alignedat}{3}
		\PDElineIn{\dot u}{\divv\bigparen{\nabla\bigparen{-\Delta u+f(u)-A(u)(z-K(u))}}}{(0,\infty)\times\Om},\\
		\PDElineIn{\dot z}{-\frac{1}{\tau(u)}(z-K(u))}{(0,\infty)\times\Om},\\
		\PDElineOn{0}{\nabla u\cdot\n=\nabla\bigparen{-\Delta u+f(u)-A(u)(z-K(u))}\cdot\n}{(0,\infty)\times\boundary{\Om}}.
	\end{alignedat}
	\right.
\end{equation*}
Thus, instead of developing directly a solution theory for \eqref{EQ:VPS_original} in the variables $(u,q)$, we seek a pair $(u,z)$ satisfying system \eqref{EQ:VPS_new_variables}.
The challenge is that, in contrast to the constant-coefficient case mentioned above, we loose the semiconvexity of the energy.
However, as we will show in Section~\ref{SEC:preliminaries}, under extra regularity assumptions on $(u,z)$, we can derive inequalities that mimic the semiconvex case: crucially, we derive a subgradient estimate of the form
\begin{align*}
	\calE(\bfv)\ge \calE(\bfu)+\product{\delta\calE(\bfu)}{\bfv-\bfu}_{H^*,H}&+\lambda(\bfu)\norm{\bfv-\bfu}_H^2,
    \\&\mathbin{\phantom{+}}\lambda(u,z)=-C(1+\norm{A'}_\infty\norm{z-K(u)}_{\LL^\infty(\Om)})^2
\end{align*}
for $\bfu\in\dom(\subDG\calE)$, $\subDG\calE(\bfu)=\{\delta\calE(\bfu)\}$, and an analogous semi-monotonicity estimate for $\subDG\calE(\bfu)$.
To exploit such ``almost $\lambda$-convexity", we show that the flow propagates a certain mild regularity using the ODE in the second line in~\eqref{EQ:VPS_new_variables}.
The extra regularity will ensure that, for solution trajectories $\bfu=\bfu(t)$, the function $t\mapsto\lambda(\bfu(t))$ is locally integrable in $[0,\infty)$, which turns out to be sufficient for establishing key properties such as local Cauchy estimates at the approximate level as well as the chain rule.

\subsection{Function spaces, hypotheses, and main results}\label{SUBSEC:main_results}

\subsubsection{General notations and hypotheses}\label{SUBSUBSEC:hyp_not}
We now render precise the functional framework for the gradient system $(\calH_m,\calE,\calR)$ induced by the transformation $\Psi$. First, we define the averaged spaces
\begin{align}\label{EQ:averaged_spaces_definition}
	\begin{aligned}
		\Hav^1(\Om)&=\bigset{\phi\in\HH^1(\Om)}{\textstyle\int_\Om\phi\dd x=0},\\
		\Hdav&=\bigset{\phi\in(\HH^1(\Om))^*}{\product{\phi}{1}_{(\HH^1(\Om))^*,\HH^1(\Om)}=0},
	\end{aligned}
\end{align}
and equip them with
\begin{align}\label{EQ:averaged_spaces_inner_products}
    \begin{aligned}
        (\phi,\psi)_{\Hav^1(\Om)}&:=\int_\Om\nabla\phi\cdot\nabla\psi\dd x,\\
        (\phi,\psi)_\Hdav&:=\bigparen{(-\Delta)^{-1}(\phi\vert_{\Hav^1(\Om)}),(-\Delta)^{-1}(\psi\vert_{\Hav^1(\Om)})}_{\Hav^1(\Om)}.
    \end{aligned}
\end{align}
Here, $-\Delta:\Hav^1(\Om)\to\Havd$ denotes the Neumann Laplacian, defined as
\[\product{-\Delta\phi}{\psi}_{\Havd,\Hav^1(\Om)}:=\int_\Om\nabla\phi\cdot\nabla\psi\dd x=(\phi,\psi)_{\Hav^1(\Om)},\]
which in fact agrees with the Riesz isomorphism of $\Hav^1(\Om)$. Due to Poincar\'e-Wirtinger's inequality, \eqref{EQ:averaged_spaces_inner_products} define inner products on the respective spaces, equivalent to the ones induced by the standard inner products of $\HH^1(\Om)$ and $(\HH^1(\Om))^*$.

We then define the affine state space as $\calH_m:=(\Hdav+m)\times\LL^2(\Om)$ for $m\in\R$ fixed and identify the associated linear space $H=\Havd\times\LL^2(\Om)$. We justify this by the map
\begin{equation}\label{EQ:dual_average_isoiso}
    \Hdav\to\Havd,\qquad\phi\mapsto\phi\vert_{\Hav^1(\Om)},
\end{equation}
which is an isometric isomorphism by the above choice~\eqref{EQ:averaged_spaces_inner_products} of inner products. Then, $H^*=\Hav^1(\Om)\times\LL^2(\Om)$. We will use the `bold font' notation $\bfu=(u,z)$ and $\bfv=(v,y)$ for elements $(u,z)$, $(v,y)$ in $\calH_m$ or $H$, and $\bfmu=(\mu,\xi)$ for $(\mu,\xi)\in H^*$. Since $\calH_m$ is contained in the Hilbert space $\Havd\times\LL^2(\Om)$, we have the following closedness property for $\ACloc^2(I;\calH_m)$ for closed intervals $I\subseteq[0,\infty)$ due to \cite[Sections 9.1, 11.2]{Ambrosio_Brue_Semola_LecturesOptimalTransport_2024}: If $(\bfu_n)_{n\in\N}\in\ACloc^2(I;\calH_m)$ satisfies $\bfu_n(t)\to\bfu(t)$ in $\calH_m$ for any $t\in I$ and $\sup_{n\in\N}\norm{\dot\bfu_n}_{\LL^2(I;H)}<\infty$, then $\bfu\in\ACloc^2(I;\calH_m)$ and $\dot\bfu_n\weakto\dot\bfu$ in $\LL^2(I;H)$. We occasionally use the notation $\LL^p(0,T;\calH_m)$ for the space of functions $\bfu$ in $\LL^p(0,T;(\HH^1(\Om))^*\times\LL^2(\Om))$ with $\bfu(t)\in\calH_m$ for almost all $t$.\\

We define the driving functional $\calE:\calH_m\to[0,+\infty]$ via
\begin{equation}\label{EQ:banach_energy_functional}
	\calE(u,z)=
    \begin{cases}
        \int_\Om\bigparen{(\frac12\abs{\nabla u}^2+F(u))+\frac12(z-K(u))^2}\dd x
        &\text{if }(u,z)\in(\Hav^1(\Om){+}m)\times\LL^2(\Om)
        \\+\infty&\text{otherwise}.
    \end{cases}
\end{equation}
In Proposition~\ref{PRO:subdifferential}, we show that $\subDG\calE(\bfu)$  contains at most one element, namely
\begin{equation}\label{EQ:subdifferential_of_energy}
	\subDG\calE(u,z)=\Braces{\begin{pmatrix}
			-\Delta u+f(u)-A(u)(z-K(u))-\fraka(u,z)\\
			z-K(u)
		\end{pmatrix}}
\end{equation}
for regular enough $\bfu\in\calH_m$, where $\fraka(u,z)=\avint{_\Om}(f(u)-A(u)(z-K(u)))\dd x$.

According to \eqref{EQ:dissipation_mechanism_new_variables}, $\calR$ is induced by
\[\bbG(\bfu):H\to H^*,\qquad\bbG(\bfu)\bfv:=\bigparen{(-\Delta)^{-1}v,\tau(u)y}\]
for $\bfu=(u,z)\in\dom(\calE)$, which indeed is linear, bounded, symmetric and positive definite under assumption~\eqref{EQ:tau_lower_upper_bound} below. The inverse $\bbK(\bfu):=(\bbG(\bfu))^{-1}$ is given by
\[\bbK(\bfu):H^*\to H,\qquad\bbK(\bfu)\bfmu=\Bigparen{-\Delta\mu,\frac{1}{\tau(u)}\xi},\]
so that the Legendre--Fenchel conjugate of $\calR$ (cf.~\cite[Section 3.2]{Peletier_lectureNotes_2014}) can be expressed as $\calR^*(\bfu;\bfmu)=\frac12\product{\bfmu}{\bbK(\bfu)\bfmu}_{H^*,H}$ for $(\bfu,\bfmu)\in\dom(\calE)\times H^*$.

Throughout the rest of this manuscript, we impose the following hypotheses on the data:
\begin{assumption}\label{ASS:general}
	We let $\Om\subset\R^d$ ($d\in\{1,2,3\}$) be a bounded domain that is either convex or has a $\CC^{1,1}$-regular boundary.
    For the potential $F\in\CC^2(\R)$ and its derivative $f:=F'$, we assume $\inf f'\geq-\beta$ for some $\beta\in(0,\infty)$, which equivalently means that $F$ can be decomposed as
	\begin{equation}\label{EQ:decomposition_F}
		F(u)=h(u)-\frac{\beta}{2}u^2
	\end{equation}
	for a convex function $h\in\CC^2(\R)$.
    In the dimensions $d=2,3$, we additionally suppose the polynomial growth
	\begin{equation}\label{EQ:growth_condition_f}
		\abs{f(u)}\leq c_1(\abs{u}^p+1)
	\end{equation}
	for all $u\in\R$, some constant $c_1\in(0,\infty)$, and with exponent
	\[\begin{cases}
		p\in[1,\infty)&\text{for }d=2,\\
		p\in[1,5]&\text{for }d=3.
	\end{cases}\]
	Observe that this also implies
	\begin{equation}\label{EQ:growth_condition_F}
		\abs{F(u)}\leq c_2(\abs{u}^{p+1}+1)
	\end{equation}
	for any $u\in\R$ and some constant $c_2\in(0,\infty)$.
    Finally, we assume $A,\tau\in\WW^{1,\infty}(\R)$ (bounded and Lipschitz continuous) and
	\begin{subequations}
		\begin{align}
			A_*&\leq A(u)\leq A^*,\\
			\tau_*&\leq\tau(u)\leq\tau^*\label{EQ:tau_lower_upper_bound}
		\end{align}
	\end{subequations}
	for all $u\in\R$, where $A_*,A^*,\tau_*,\tau^*\in(0,\infty)$ are fixed.
\end{assumption}

We point out that the proofs in Sections~\ref{SEC:well_posedness}~and~\ref{SEC:relaxation_limit} are presented for the 2D/3D setting to avoid repeated dimension-specific case distinctions; the results, however, remain valid in 1D.
The point is that the $p$-growth is relevant only in 2D and 3D, ensuring $F(u)\in\LL^1(\Om)$ for $u\in\HH^1(\Om)$ via the Sobolev embedding $\HH^1(\Om)\hookrightarrow\LL^p(\Om)$, while in 1D we have $\HH^1(\Om)\hookrightarrow\CC(\varcl{\Om})$, which immediately yields $f(u),F(u)\in\CC(\varcl{\Om})$.

Note that condition \eqref{EQ:growth_condition_F} particularly implies $\dom(\calE)=(\Hav^1(\Om)+m)\times\LL^2(\Om)$. Moreover, \eqref{EQ:tau_lower_upper_bound} ensures
\begin{subequations}\label{EQ:dissipation_norm_equivalences}
	\begin{align}
		\frac{\min\{1,\tau_*\}}{2}\norm{\bfv}_H^2&\leq\calR(\bfu;\bfv)\leq\frac{\max\{1,\tau^*\}}{2}\norm{\bfv}_H^2,\label{EQ:dissipation_norm_equivalence}\\
		\frac{1}{2\max\{1,\tau^*\}}\norm{\bfmu}_{H^*}^2&\leq\calR^*(\bfu;\bfmu)\leq\frac{1}{2\min\{1,\tau_*\}}\norm{\bfmu}_{H^*}^2\label{EQ:dissipation_dual_norm_equivalence}
	\end{align}
\end{subequations}
for every $(\bfu,\bfv,\bfmu)\in\dom(\calE)\times H\times H^*$.

\subsubsection{Well-posedness}
To formulate our first main result, we recall Definition~\ref{DEF:gf_solution} of a gradient-flow solution associated to a given gradient system. Unless specified otherwise, the underlying gradient system is understood to be that introduced in Subsection~\ref{SUBSUBSEC:hyp_not}.

\begin{theorem}[Well-posedness]\label{TH:well_posedness}
	For any initial data $\bfu^0=(u^0,z^0)\in(\Hav^1(\Om)+m)\times\LL^\infty(\Om)$, there exists a gradient-flow solution $\bfu=(u,z)\in\AClocZ$ with $\bfu(0)=\bfu^0$. Any such solution $\bfu$ enjoys the regularity $\bfu\in\Lloc^2([0,\infty);\HH^2(\Om))\times\Hloc^1([0,\infty);\LL^\infty(\Om))$ and satisfies the energy-dissipation balance
	\begin{equation}\label{EQ:edb}
		\calE(\bfu(t))+\int_s^t\int_\Om\Bigparen{\bigabs{\nabla(-\Delta u+f(u)-A(u)(z-K(u)))}^2+\frac{1}{\tau(u)}(z-K(u))^2}\dd x\dd r=\calE(\bfu(s))
	\end{equation}
	for all $0\leq s\leq t<\infty$.

	Moreover, any two gradient-flow solutions $\bfu_i=(u_i,z_i)\in \AClocZ$ with initial values $\bfu_i(0)\in (\Hav^1(\Om)+m)\times\LL^\infty(\Om)$, $i=1,2$, satisfy the stability estimate
	\begin{equation}\label{EQ:stability_estimate}
		\norm{\bfu_1(t)-\bfu_2(t)}_H\leq\e^{C\int_s^t(1+(\norm{A'}_\infty+\norm{\tau'}_\infty)\norm{z_1-K(u_1)}_{\LL^\infty(\Om)})^2\dd r}\norm{\bfu_1(s)-\bfu_2(s)}_H
	\end{equation}
	for all $0\leq s\leq t<\infty$, where $C\in(0,\infty)$ is a constant only depending on $\beta$, $A^*$ and $\tau_*$. In particular, solutions emanating from a given initial value are unique.
\end{theorem}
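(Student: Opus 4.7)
The plan is to use the minimising-movements scheme outlined in Subsection~\ref{SUBSEC:strategy}, combined with the almost $\lambda$-convexity and semi-monotonicity estimates announced for Section~\ref{SEC:preliminaries} to compensate for the loss of semiconvexity of $\calE$. For a sequence of time steps $h_n\to 0$, I construct incremental minimisers $\bfu_n^{k+1}\in\arg\min\bigl\{\calE(\bfv)+\tfrac{1}{2h_n}\|\bfv-\bfu_n^k\|_{\bbG(\bfu_n^k)}^2:\bfv\in\calH_m\bigr\}$. Their existence follows from the direct method: coercivity is guaranteed by the decomposition~\eqref{EQ:decomposition_F}, since for $h_n$ small enough the quadratic penalty dominates the non-convex $-\tfrac{\beta}{2}u^2$ contribution, while weak lower semicontinuity rests on the growth~\eqref{EQ:growth_condition_f} and Sobolev embedding. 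The Euler--Lagrange condition $-\bbG(\bfu_n^k)(\bfu_n^{k+1}-\bfu_n^k)/h_n\in\subDG\calE(\bfu_n^{k+1})$, tested against the increment, yields a discrete energy-dissipation inequality, so the piecewise-affine interpolants $\bfu_n$ are uniformly bounded in $\AClocZ$ and in energy.

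The explicit second component of the scheme reads
\[
    z_n^{k+1}=\frac{z_n^k+\tfrac{h_n}{\tau(u_n^{k+1})}K(u_n^{k+1})}{1+\tfrac{h_n}{\tau(u_n^{k+1})}},
\]
from which the Lipschitz continuity of $K$ and the bound~\eqref{EQ:tau_lower_upper_bound} give a uniform $\LL^\infty$-bound on $z_n-K(u_n)$ in terms of $\|z^0-K(u^0)\|_{\LL^\infty(\Om)}$ and the dissipation. In particular, the $\lambda$-defect along the discrete trajectories is locally bounded uniformly in $n$. Invoking the almost $\lambda$-convexity and semi-monotonicity inequalities from Section~\ref{SEC:preliminaries}, a discrete Grönwall argument comparing two approximants of different step sizes shows that $(\bfu_n)$ is Cauchy in $C_{\mathrm{loc}}([0,\infty);H)$. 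The limit $\bfu\in\AClocZ$ satisfies $\bfu(0)=\bfu^0$ and, after identifying the weak limit of the discrete momenta, solves $-\bbG(\bfu)\dot\bfu\in\subDG\calE(\bfu)$ a.e. The extra regularity then follows a posteriori: elliptic regularity applied to $-\Delta u+f(u)-A(u)(z-K(u))\in\HH^1(\Om)$ (via the $\Havd$-bound on $\dot u$) gives $u\in\Lloc^2([0,\infty);\HH^2(\Om))$, while $z\in\Hloc^1([0,\infty);\LL^\infty(\Om))$ is a direct consequence of the propagated $\LL^\infty$-bound and the ODE. The chain rule from Section~\ref{SEC:preliminaries} finally upgrades the EDI to the equality~\eqref{EQ:edb}.

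For the stability estimate, I differentiate $\tfrac12\|\bfu_1-\bfu_2\|_{\bbG(\bfu_1)}^2$ along the two flows. Writing $\bfmu_i:=-\bbG(\bfu_i)\dot\bfu_i\in\subDG\calE(\bfu_i)$, the subdifferential pairing $\langle\bfmu_1-\bfmu_2,\bfu_1-\bfu_2\rangle_{H^*,H}$ is estimated from below using semi-monotonicity in terms of $\lambda(\bfu_1)\|\bfu_1-\bfu_2\|_H^2$, and the state-dependence of $\bbG$ adds correction terms bounded by $\|\tau'\|_\infty\|u_1-u_2\|\cdot\|\dot u_1\|$, absorbable via Young's inequality into the exponent in~\eqref{EQ:stability_estimate}; the local integrability of that exponent is precisely what the regularity $z_1\in\Hloc^1(\LL^\infty)$ provides. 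Grönwall's lemma then delivers~\eqref{EQ:stability_estimate}, and uniqueness follows at once.

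The main obstacle is the simultaneous failure of semiconvexity of $\calE$ and of the compactness conditions on which the Rossi--Savaré approach is built, so that none of the classical Hilbertian theories applies directly. The technical heart of the argument is the self-consistent interplay between the parabolic--ODE structure of~\eqref{EQ:VPS_new_variables}, which propagates the $\LL^\infty$-regularity of $z-K(u)$, and the almost $\lambda$-convexity of $\calE$, whose defect depends precisely on this norm; together they close the Grönwall-type estimates both at the approximation level and for the stability comparison.
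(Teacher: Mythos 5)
Your plan follows the same overall blueprint as the paper (semi-implicit minimising movements, almost $\lambda$-convexity, semi-monotonicity, chain rule), but there are two concrete gaps.

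\emph{The claimed uniform $\LL^\infty$-bound on $z-K(u)$ is not available.} From the explicit discrete ODE one indeed obtains a uniform (in $n$ and in time) $\LL^\infty$-bound on the interpolants $\olz_\kappa$ themselves, but \emph{not} on $\olz_\kappa-K(\olu_\kappa)$: the term $K(\olu_\kappa)$ is controlled only in $\LL^2(0,T;\LL^\infty(\Om))$, and this comes from the $\HH^2$-estimate on $u$ (Lemma~\ref{LEM:W_estimate}) combined with the Sobolev embedding $\HH^2(\Om)\hookrightarrow\LL^\infty(\Om)$ — for $d\ge2$ there is no pointwise-in-time $\LL^\infty_x$ bound on $u$. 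Consequently the $\lambda$-defect is only $\LL^1_t$, not locally bounded; your statement that it is ``locally bounded uniformly in $n$'' is false and would, if true, reduce the problem to genuine $\lambda$-convexity, which is precisely what the paper cannot assume. The Cauchy estimate and the chain rule must therefore be organised around the $\LL^1_t$-integrability of $\lambda(\bfu(\cdot))$ (and, separately, the $\LL^2_t\LL^\infty_x$-control of $z-K(u)$), which requires the $\HH^2$-estimate already at the discrete level and forces the specific two-step structure in the paper's a priori estimates (first $u\in\LL^2_t\HH^2_x$ from Lemma~\ref{LEM:W_estimate}, then $z\in\LL^\infty_t\LL^\infty_x$ from the discrete ODE). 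You also need to double-check the scheme: the paper evaluates $\tau$ at the \emph{previous} iterate, $\tau(u_{n-1}^\kappa)$, not at $u_n^{k+1}$; this is what makes the minimisation problem a simple quadratic perturbation of $\calE$.

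\emph{The stability estimate cannot be obtained by differentiating $\tfrac12\|\bfu_1-\bfu_2\|_{\bbG(\bfu_1)}^2$.} The time derivative of the $\bbG(\bfu_1)$-norm produces, from the $\tau$-block, a term of the form $\tfrac12\int_\Om\tau'(u_1)\dot u_1\,(z_1-z_2)^2\dd x$, which is not well-defined because $\dot u_1$ takes values only in $\Havd$, a distribution space. The paper instead differentiates the \emph{state-independent} $H$-norm, so that the identity $\dot\bfu_i=-\bbK(\bfu_i)\delta\calE(\bfu_i)$ turns the derivative into $-\bigparen{\bbK(\bfu_1)\delta\calE(\bfu_1)-\bbK(\bfu_2)\delta\calE(\bfu_2),\bfu_1-\bfu_2}_H$, and the state-dependence of $\bbK$ is absorbed directly into the semi-monotonicity estimate of Proposition~\ref{PRO:monotonicity_subdifferential}; no $\dot u_1$ appears, and the exponent has exactly the form required by~\eqref{EQ:stability_estimate}. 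Your version, even if regularised, would produce an exponent involving $\norm{\dot u_1}$, which is not the stated bound.
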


The proof of Theorem~\ref{TH:well_posedness} is carried out in Subsection~\ref{SUBSEC:well_posedness}.\medskip

\begin{remark}
    \leavevmode
    \begin{enumerate}
        \item Note that the term $\nabla(-\Delta u+f(u)-A(u)(z-K(u)))$ appearing in~\eqref{EQ:edb} is a well-defined function in $\Lloc^2([0,\infty);\LL^2(\Om))$. This is implied by the regularity $\dot\bfu\in\Lloc^2([0,\infty);H)$, the gradient-flow equation and the subdifferential~\eqref{EQ:subdifferential_of_energy}.
        From \eqref{EQ:edb} and the Fenchel equivalences~\eqref{EQ:fenchel_equivalences} we even obtain the global control $z-K(u)\in\LL^2([0,\infty);\LL^2(\Om))$, as well as $-\Delta u+f(u)-A(u)(z-K(u))-\fraka(u,z)\in\LL^2([0,\infty);\HH^1(\Om))$ and $\dot\bfu\in\LL^2([0,\infty);H)$.
        \item In the above theorem, the unique solution $\bfu$ also satisfies the GFE with the Gateaux subdifferential replaced by the Fr\'echet subdifferential. This follows from the regularity $\bfu\in\Lloc^2([0,\infty);\LL^\infty(\Om;\R^2))$ and the subgradient estimate in Proposition~\ref{PRO:subdifferential}.
        \item If $A$ and $\tau$ are constant, the integrating factor in the stability estimate~\eqref{EQ:stability_estimate} does no longer depend on the solution. In particular, via an approximation argument, estimate~\eqref{EQ:stability_estimate} yields existence and uniqueness of GF solutions $\bfu$ in $\ACloc^2((0,\infty);\calH_m)$ (even EVI-solutions, as mentioned in the motivation) with respect to initial values in $\calH_m$.
    \end{enumerate}
\end{remark}

The second main result extends Theorem~\ref{TH:well_posedness} to initial values in the energy domain, allowing for $z^0$ merely in $\LL^2(\Om)$. Theorem~\ref{TH:existence_extended_initials} asserts that solutions still exist and satisfy the energy--dissipation inequality. The stability estimate~\eqref{EQ:stability_estimate} for two solutions remains valid as long as the initial value of one of the solutions lies in $\HH^1(\Om)\times\LL^\infty(\Om)$, which enables us to prove a weak--strong-type uniqueness.
\begin{theorem}[Finite-energy data]\label{TH:existence_extended_initials}
	Let $\bfu^0=(u^0,z^0)\in(\Hav^1(\Om)+m)\times\LL^2(\Om)$. Then, there exists a gradient-flow solution $\bfu=(u,z)\in\AClocZ$ with $\bfu(0)=\bfu^0$ that satisfies the energy-dissipation inequality
	\begin{equation}\label{EQ:edi}
		\calE(\bfu(t))+\int_0^t\int_\Om\Bigparen{\bigabs{\nabla\bigparen{-\Delta u+f(u)-A(u)(z-K(u))}}^2+\frac{1}{\tau(u)}(z-K(u))^2}\dd x\dd s\leq\calE(\bfu^0)
	\end{equation}
	for all $t\in[0,\infty)$.

	Moreover, if $\bfu=(u,z),\bfv=(v,y)\in\AClocZ$ are two gradient-flow solutions with initial values $\bfu(0)=:\bfu^0\in(\Hav^1(\Om)+m)\times\LL^\infty(\Om)$ and $\bfv(0)=:\bfv^0\in(\Hav^1(\Om)+m)\times\LL^2(\Om)$, then $\bfu\in\Lloc^2([0,\infty);\HH^2(\Om))\times\Hloc^1([0,\infty);\LL^\infty(\Om))$ by Theorem \ref{TH:well_posedness} and it holds
	\begin{equation}\label{EQ:ext_stability_estimate}
		\norm{\bfu(t)-\bfv(t)}_H\leq\e^{C\int_s^t(1+(\norm{A'}_\infty+\norm{\tau'}_\infty)\norm{z-K(u)}_{\LL^\infty(\Om)})^2\dd r}\norm{\bfu(s)-\bfv(s)}_H
	\end{equation}
	for all $0\leq s\leq t<\infty$, where $C\in(0,\infty)$ only depends on $\beta$, $A^*$ and $\tau_*$. In addition, if $\norm{z^0}_{\LL^\infty(\Om)}\leq M$ and $\calE(\bfu^0)\leq E_0$ for certain $M,E_0\in(0,\infty)$, and $T\in(0,\infty)$ is fixed, then $\norm{z-K(u)}_{\LL^2(0,T;\LL^\infty(\Om))}$ can be estimated by a constant depending only on $\Om$, $T$, $d$, $E_0$, $M$, $m$, $\beta$, $A^*$ and $\tau_*$.
\end{theorem}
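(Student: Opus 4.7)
My plan is to approximate $z^0\in\LL^2(\Om)$ by the truncations $z^0_n:=\max(-n,\min(n,z^0))\in\LL^\infty(\Om)$ and to apply Theorem~\ref{TH:well_posedness} to obtain GF solutions $\bfu_n=(u_n,z_n)$ with initial data $(u^0,z^0_n)$. Since $z^0_n\to z^0$ in $\LL^2$, we have $\calE(u^0,z^0_n)\to\calE(\bfu^0)$ and, in particular, the initial energies are uniformly bounded. The EDB~\eqref{EQ:edb} together with~\eqref{EQ:dissipation_norm_equivalences} then yields uniform-in-$n$ bounds for $\calE(\bfu_n(\cdot))$ in $\LL^\infty(0,T)$, for $\dot\bfu_n$ in $\LL^2(0,T;H)$, and for the chemical-potential variable $-\Delta u_n+f(u_n)-A(u_n)(z_n-K(u_n))-\fraka(u_n,z_n)$ in $\LL^2(0,T;\Hav^1(\Om))$. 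In particular, $u_n$ is bounded in $\LL^\infty(0,T;\HH^1(\Om))\cap\HH^1(0,T;\Havd)$ and $z_n$ in $\LL^\infty(0,T;\LL^2(\Om))\cap\HH^1(0,T;\LL^2(\Om))$.

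\textbf{Passage to the limit.} Aubin--Lions compactness then delivers, along a subsequence, $u_n\to u$ strongly in $\LL^2(0,T;\LL^q(\Om))$ for any $q<6$ and a.e.\ in $(0,T)\times\Om$, together with $z_n\rightharpoonup z$ weakly in $\LL^2(0,T;\LL^2(\Om))$. The growth condition~\eqref{EQ:growth_condition_f} and the Lipschitz continuity of $A$, $K$ yield strong convergence of $f(u_n)$, $A(u_n)$, $K(u_n)$ in $\LL^2_{t,x}$, which is enough to pass to the limit in the equivalent PDE form~\eqref{EQ:VPS_new_variables} and identify $\bfu$ as a GF solution via~\eqref{EQ:subdifferential_of_energy}. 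The EDI~\eqref{EQ:edi} then follows from weak lower semicontinuity of the two dissipation integrals and continuity of $\calE$ at the initial time.

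\textbf{The $\LL^2_t\LL^\infty_x$ estimate.} Given $z^0\in\LL^\infty(\Om)$ with $\|z^0\|_\infty\leq M$ and $\calE(\bfu^0)\leq E_0$, I would bootstrap the spatial regularity of $u$. Setting $w:=z-K(u)$, the EDB places the chemical potential in $\LL^2(0,T;\LL^2(\Om))$; combined with $w\in\LL^\infty(0,T;\LL^2(\Om))$ (energy bound), boundedness of $A$, and $f(u)\in\LL^\infty(0,T;\LL^2(\Om))$ (via $\HH^1\hookrightarrow\LL^6$ and~\eqref{EQ:growth_condition_f}), this yields $\Delta u\in\LL^2(0,T;\LL^2(\Om))$. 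Neumann elliptic regularity (available by the assumption on $\partial\Om$) then gives $u\in\LL^2(0,T;\HH^2(\Om))\hookrightarrow\LL^2(0,T;\LL^\infty(\Om))$ in $d\leq 3$, with norm depending only on the listed constants. The ODE $\dot z=-w/\tau(u)$ from~\eqref{EQ:VPS_new_variables} together with $|K(u)|\leq|K(0)|+\|A\|_\infty|u|$ gives, pointwise in $x\in\Om$,
\begin{equation*}
    |z(t,x)|\leq M+\tau_*^{-1}\int_0^t\bigl(|z(s,x)|+|K(0)|+\|A\|_\infty|u(s,x)|\bigr)\dd s,
\end{equation*}
so Gronwall's inequality yields a bound on $\|z\|_{\LL^\infty(0,T;\LL^\infty(\Om))}$, and hence on $\|w\|_{\LL^2(0,T;\LL^\infty(\Om))}$, in terms of the announced constants.

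\textbf{Weak-strong stability.} For~\eqref{EQ:ext_stability_estimate}, the key observation is that the integrating factor in~\eqref{EQ:stability_estimate} depends only on the strong solution $\bfu$, and is now controlled in $\LL^2_t$ by the preceding step. I would reproduce the proof of~\eqref{EQ:stability_estimate} by applying the semi-monotonicity estimate for $\subDG\calE$ from Section~\ref{SEC:preliminaries} at $\bfu$ (where the relevant $\LL^\infty$ regularity is available) and coupling it, via the general chain rule of Section~\ref{SEC:preliminaries}, with the GFE satisfied by $\bfv$ to derive a differential inequality
\begin{equation*}
    \frac{\dd}{\dd t}\|\bfu-\bfv\|_H^2\leq C\bigl(1+(\|A'\|_\infty+\|\tau'\|_\infty)\|z-K(u)\|_{\LL^\infty}\bigr)^2\|\bfu-\bfv\|_H^2,
\end{equation*}
from which Gronwall's lemma gives~\eqref{EQ:ext_stability_estimate}. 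The main obstacle is that $\bfv$ satisfies only~\eqref{EQ:edi} a priori, so the semi-monotonicity estimate cannot be symmetrised; the asymmetric form of~\eqref{EQ:ext_stability_estimate}, with the integrating factor depending solely on the more regular solution, is precisely what makes the argument go through.
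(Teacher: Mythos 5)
Your proposal takes essentially the same route as the paper's proof: approximate the initial $z^0$ by truncations, invoke Theorem~\ref{TH:well_posedness} and the EDB for uniform bounds, pass to the limit via Aubin--Lions, and obtain the EDI by lower semicontinuity; then control $\|z-K(u)\|_{\LL^2_t\LL^\infty_x}$ via the $z$-ODE, and derive the weak--strong stability estimate by applying the semi-monotonicity estimate of Proposition~\ref{PRO:monotonicity_subdifferential} at the more regular solution $\bfu$. This is correct in structure. Two technical points deserve attention, though neither is fatal.

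First, the claim that $f(u_n)\to f(u)$ strongly in $\LL^2((0,T)\times\Om)$ is too strong as stated. With only $u_n$ bounded in $\LL^\infty(0,T;\HH^1(\Om))$ (hence $\LL^\infty(0,T;\LL^6(\Om))$ in $d=3$) and strong convergence in $\LL^2(0,T;\LL^q(\Om))$ for $q<6$, one does not reach $\LL^2_{t,x}$ convergence of $|u_n|^p$ for $p$ near $5$. The paper proves only $f(u_n)\to f(u)$ in $\LL^1_{\mathrm{loc}}([0,\infty);\LL^1(\Om))$ via dominated convergence, which suffices because the whole chemical potential $\tilde\mu_n$ is known to converge \emph{weakly} in $\LL^2(0,T;\HH^1(\Om))$ and the identification of the limit only requires distributional convergence of each of its constituents. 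You could salvage your $\LL^2_{t,x}$ claim by first establishing the uniform $\LL^2(0,T;\HH^2(\Om))$ bound for $u_n$ (and using parabolic-type interpolation), but that is unnecessary extra work; the $\LL^1$ argument is cleaner.

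Second, and related, you need the uniform $\LL^2(0,T;\HH^2(\Om))$ bound on $u_n$ already during the limit passage, not only in the later $\LL^2_t\LL^\infty_x$ step: it is what gives $-\Delta u_n\rightharpoonup -\Delta u$ in $\LL^2_{t,x}$ (so the limit chemical potential can be identified) and what delivers the required $u\in\Lloc^2([0,\infty);\HH^2(\Om))$ regularity of the limiting gradient-flow solution. In the paper this is Lemma~\ref{LEM:W_estimate}, applied to the $\LL^2(0,T;\HH^1)$-bounded sequence $\tilde\mu_n$ together with the uniform $H$-bound on $\bfu_n$. Your later argument via Neumann elliptic regularity is exactly this estimate; you should invoke it earlier. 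With these two repairs your proof matches the paper's.
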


The proof is provided in Subsection~\ref{SUBSEC:existence_extended_initials}.\medskip

By virtue of Lemma~\ref{LEM:sobolev_bochner_chain_rule}, the variable transformation $\Psi(u,z)=(u,q)$ from Subsection~\ref{SUBSEC:strategy} is rigorously justified. Thus, Theorems~\ref{TH:well_posedness}~and~\ref{TH:existence_extended_initials} imply:

\begin{corollary}[Original variables]
	For any $(u^0,q^0)\in(\Hav^1(\Om)+m)\times\LL^2(\Om)$, there exists a weak solution $(u,q)$ to \eqref{EQ:VPS_original} satisfying the energy-dissipation inequality, i.e.\ the following six conditions are fulfilled:
	\begin{enumerate}
		\item $u\in\LL^\infty([0,\infty);\HH^1(\Om))\cap\Lloc^2([0,\infty);\HH^2(\Om))$ with $\dot u\in\LL^2([0,\infty);\Havd)$;
		\item $q\in\CC([0,\infty);\LL^2(\Om))$ with $\dot q\in\Lloc^{4/3}([0,\infty);(\HH^1(\Om))^*)$;
		\item $-\Delta u+f(u)-A(u)q\in\Lloc^2([0,\infty);\HH^1(\Om))$;
		\item for almost all $t\in(0,\infty)$ it holds
		\begin{align*}
			\product{\dot u(t)}{\varphi}_{\Havd,\Hav^1(\Om)}&=-\int_\Om\nabla\bigparen{-\Delta u(t)+f(u(t))-A(u(t))q(t)}\cdot\nabla\varphi\dd x,\\
			\product{\dot q(t)}{\psi}_{(\HH^1(\Om))^*,\HH^1(\Om)}&=-\product{\dot u(t)}{A(u(t))\psi}_{(\HH^1(\Om))^*,\HH^1(\Om)}-\int_\Om\frac{1}{\tau(u(t))}q(t)\psi\dd x
		\end{align*}
		for all $\varphi\in\Hav^1(\Om)$, $\psi\in\HH^1(\Om)$;
		\item (initial conditions) $u(0)=u^0$ in $\LL^2(\Om)$ and $q(0)=q^0$ in $\LL^2(\Om)$ are fulfilled;
		\item (energy-dissipation inequality) for all $t\in(0,\infty)$ it holds (with $\calF$ as in \eqref{EQ:energy_original})
		\[\calF(u(t),q(t))+\int_0^t\int_\Om\Bigparen{\bigabs{\nabla\bigparen{-\Delta u+f(u)-A(u)q}}^2+\frac{1}{\tau(u)}q^2}\dd x\dd r\leq\calF(u^0,q^0).\]
	\end{enumerate}
	If in addition $q_0+K(u_0)\in\LL^\infty(\Om)$, then $q\in\Lloc^2([0,\infty);\LL^\infty(\Om))$, the solution $(u,q)$ is unique and satisfies the energy-dissipation balance.
\end{corollary}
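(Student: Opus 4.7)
The plan is to reduce the corollary to Theorems~\ref{TH:well_posedness} and~\ref{TH:existence_extended_initials} via the change of variables $\Psi(u,z) = (u, z - K(u))$ from Subsection~\ref{SUBSEC:strategy}. Given $(u^0, q^0) \in (\Hav^1(\Om) + m) \times \LL^2(\Om)$, I set $z^0 := q^0 + K(u^0)$. Since $A \in \LL^\infty(\R)$ and $K(0) = 0$, the primitive $K$ is globally Lipschitz, hence $z^0 \in \LL^2(\Om)$ and $(u^0, z^0) \in \dom(\calE)$. Theorem~\ref{TH:existence_extended_initials} then yields a gradient-flow solution $\bfu = (u, z) \in \AClocZ$ with $\bfu(0) = (u^0, z^0)$ satisfying the EDI~\eqref{EQ:edi}. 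Define $q := z - K(u)$; by Lemma~\ref{LEM:sobolev_bochner_chain_rule}, the identity $\dot q = \dot z - A(u)\dot u$ holds in the appropriate distributional sense.

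The six properties of $(u,q)$ follow from the EDI and the PDE form~\eqref{EQ:subdifferential_of_energy} of the subdifferential. The energy bound yields $u \in \LL^\infty([0,\infty);\HH^1(\Om))$. The first line of the GF equation reads $\dot u = \Delta \mu$ weakly, with $\mu := -\Delta u + f(u) - A(u) q$; the EDI controls $\nabla\mu$ in $\LL^2(\LL^2)$ and gives $\dot u \in \LL^2([0,\infty);\Havd)$. The growth condition on $f$ together with $u \in \LL^\infty(\LL^{p+1})$ (Sobolev) and $q \in \LL^\infty(\LL^2)$ bounds $\fraka(u,z)$ uniformly, so $\mu \in \Lloc^2(\HH^1)$, proving (iii). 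For the $\HH^2$-regularity in (i), I would test $-\Delta u = \mu + A(u) q - f(u)$ against $-\Delta u$, split $f = h' - \beta\,\mathrm{id}$ with $h$ convex (Assumption~\ref{ASS:general}) to obtain $-\int f(u)\Delta u \,\dd x = \int h''(u)|\nabla u|^2 \,\dd x - \beta \int u \Delta u \,\dd x$: the first term is nonnegative, the second contributes $-\beta \int|\nabla u|^2$ after integration by parts, and Young's inequality on the remainder absorbs a factor of $\int|\Delta u|^2$, delivering $u \in \Lloc^2(\HH^2)$ via Neumann elliptic regularity. For (ii), $z \in \Hloc^1(\LL^2) \hookrightarrow \CC(\LL^2)$ and $u \in \CC(\Havd) \cap \LL^\infty(\HH^1) \hookrightarrow \CC(\LL^2)$ (Rellich plus uniqueness of the weak limit) give $q \in \CC(\LL^2)$; writing $\product{A(u)\dot u}{\psi}_{(\HH^1)^*,\HH^1} = -\int \nabla\mu \cdot \nabla(A(u)\psi) \,\dd x$ and expanding the product rule yields $\norm{A(u)\dot u}_{(\HH^1)^*} \lesssim \norm{\nabla\mu}_{\LL^2}(1 + \norm{\nabla u}_{\LL^3})$, whence the interpolation $\nabla u \in \LL^\infty(\LL^2) \cap \Lloc^2(\LL^6) \hookrightarrow \Lloc^4(\LL^3)$ combined with H\"older-in-time gives $\dot q \in \Lloc^{4/3}((\HH^1(\Om))^*)$.

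The weak formulation (iv) is a direct transcription of the two lines of the GF equation, using~\eqref{EQ:subdifferential_of_energy}, $\dot z = -(z - K(u))/\tau(u)$ and $\dot q = \dot z - A(u)\dot u$; the initial conditions (v) follow from the continuity in $\LL^2$ established above and from $z(0) = q^0 + K(u^0)$; and (vi) is just~\eqref{EQ:edi} rewritten via $\calE(u,z) = \calF(u,q)$. Finally, if additionally $q^0 + K(u^0) \in \LL^\infty(\Om)$, then $z^0 \in \LL^\infty(\Om)$, and Theorem~\ref{TH:well_posedness} yields uniqueness, the EDB, and $z \in \Hloc^1(\LL^\infty)$; together with $K(u) \in \Lloc^2(\LL^\infty)$ coming from $u \in \Lloc^2(\HH^2) \hookrightarrow \Lloc^2(\LL^\infty)$ in $d \leq 3$ and the Lipschitz continuity of $K$, this produces $q \in \Lloc^2(\LL^\infty)$. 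I expect the main technical point to be the interpolation estimate ensuring $\dot q \in \Lloc^{4/3}((\HH^1)^*)$, which crucially relies on the parabolic $\HH^2$-bound established in step~2; the remaining manipulations are essentially a rigorous rewriting under the variable change.
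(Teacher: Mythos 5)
Your proposal takes the same route the paper intends for this corollary: the paper offers no separate proof, merely invoking Theorems~\ref{TH:well_posedness}, \ref{TH:existence_extended_initials} and the Sobolev--Bochner chain rule of Lemma~\ref{LEM:sobolev_bochner_chain_rule} to transfer the result back to the original variables $(u,q)=(u,z-K(u))$, and your argument fills in exactly the implied details. Everything you write is sound: the reduction $z^0=q^0+K(u^0)\in\LL^2$, the transcription of the two lines of the gradient-flow equation into (iv) (with the mean value $\fraka$ dropping out under the spatial gradient in the first equation and the distributional identity $\dot q=\dot z-\partial_t(K\circ u)$ in the second), the regularity bookkeeping in (i)--(iii), and the upgrade to uniqueness/EDB/$\LL^\infty$-control when $z^0\in\LL^\infty(\Om)$. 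A few remarks: the $\HH^2$-regularity and the $\Lloc^{4/3}$-bound on $\dot q$ you re-derive are already packaged in Lemma~\ref{LEM:W_estimate} and Lemma~\ref{LEM:sobolev_bochner_chain_rule} respectively, so one may also simply cite those; and in your $\HH^2$-estimate the intermediate display has a sign slip (the identity should read $-\int f(u)\Delta u\,\dd x=\int h''(u)|\nabla u|^2\,\dd x+\beta\int u\Delta u\,\dd x$, which then integrates by parts to the $-\beta\int|\nabla u|^2\,\dd x$ you state), but your final bound is correct and matches the paper's Lemma~\ref{LEM:W_estimate}.
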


\subsubsection{Relaxation}\label{SUBSEC:relaxation}
Beyond the well-posedness theory, we are interested in connecting the viscoelastic system to classical models of phase separation, where elastic effects are neglected.
Observe that, so far, the VPS system was formulated in dimensionless variables and inessential multiplicative constants were set equal to unity. To study relaxation limits, however, we need to take the characteristic size $q_c$ of the bulk stress variable to be of order $o(1)$.
We will therefore consider a rescaled free energy, which at the level of the original physical variables reads as
\begin{align*}
    \mathscr{\widetilde E}(u,q)=\int_\Om\Bigparen{\frac{1}{2}\abs{\nabla u}^2+F(u)}\dd x+\frac12\int_\Om\left(\frac{q}{q_c}\right)^2\dd x,
\end{align*}
and an appropriately rescaled dissipation mechanism
\begin{align*}
	\mathscr{\widetilde G}(u,q)=t_c^{-1}\begin{pmatrix}
		(-\Delta)^{-1}+A^2(u)\tau(u)&q_c^{-1}A(u)\tau(u)
		\\q_c^{-1}A(u)\tau(u)&q_c^{-2}\tau(u)
	\end{pmatrix}. 
\end{align*} 
Note that here $t_c>0$ denotes a characteristic time scale.
Below, we perform the corresponding change of variables directly in the ``diagonal" coordinates $(u,z)$.
We will then make the ansatz $q_c=\ve$ for $\ve\in(0,1)$ and study the asymptotic behaviour, as $\ve\searrow0$, for suitable choices of $A=\widehat A_\ve$, $\tau=\widehat\tau_\ve$, and $t_c=t_{c,\ve}$.

For the rigorous asymptotics, we first invoke Theorem~\ref{TH:existence_extended_initials} (with $A=\widehat A_\ve, \tau=\widehat\tau_\ve$), which provides gradient-flow solutions $\widehat\bfu_\ve$ to $(\calH_m,\widehat\calE_\ve,\widehat\calR_\ve)$, where $\widehat\calE_\ve$ and $\widehat\calR_\ve$ are given via
\begin{align}
	\widehat\calE_\ve(\bfu)&=\int_\Om\Bigparen{\frac{1}{2}\abs{\nabla u}^2+F(u)}\dd x+\frac{1}{2}\int_\Om\bigparen{z-\widehat K_\ve(u)}^2\dd x,\label{EQ:unrescaled_epsilon_energy}\\
	\widehat\calR_\ve(\bfv;\bfu)&=\frac{1}{2}\bigparen{\norm{v}_\Havd^2+(\widehat\tau_\ve(u)y,y)_{\LL^2(\Om)}}\notag
\end{align}
with $\widehat K_\ve'=\widehat A_\ve, \widehat K_\ve(0)=0$.
As explained above, before studying the limit $\ve\searrow0$, we need to consider new variables that account for a vanishing bulk stress size $q_c$.
The appropriate transformation of the dependent variables is given by $\widehat\Psi(u,z)=(u,q_c^{-1}z)=(\widehat u,\widehat z),$ where $q_c=\ve$.
Combined with the time rescaling under the ansatz $t_c=\ve^{-\gamma}, \gamma\in[0,\infty)$, this leads to a gradient system with the following rescaled energy and dissipation potential
\begin{align}
	\calE_\ve(u,z)&:=\int_\Om\Bigparen{\frac{1}{2}\abs{\nabla u}^2+F(u)}\dd x+\frac{1}{2\ve^2}\int_\Om(z-K_\ve(u))^2\dd x,\label{EQ:energy_epsilon}\\
	\calR_\ve^{(\gamma,\kappa)}((u,z);(v,y))&:=\frac{1}{2}\bigparen{\ve^\gamma\norm{v}_\Havd^2+\ve^\kappa(\tau_\ve(u)y,y)_{\LL^2(\Om)}},\label{EQ:dissipation_potential_epsilon}
\end{align}
where $K_\ve(u):=\ve\widehat K_\ve(u)$, and $\ve^\kappa\tau_\ve(u):=\ve^{\gamma-2}\widehat\tau_\ve(u)$.
The goal is then to show that, in some sense, the sequence of solutions in the new variables $(\bfu_\ve)_{\ve\in(0,1)}$, determined by $\widehat\Psi(\bfu_\ve(t))=\widehat\bfu_\ve(\ve^{-\gamma}t)$, converges (up to a subsequence) to a gradient-flow solution $\bfu_0$ of some ``effective'' gradient system $(\calH_\mathrm{eff},\calE_\mathrm{eff},\calR_\mathrm{eff})$ (see also Remark \ref{REM:relation_pE_EDB_EDP_convergence} below).
Formula \eqref{EQ:energy_epsilon} and the definition of $K_\ve(u)$ suggest to take $\widehat A_\ve=\ve^{-1}A_\ve$ for $A_\ve$ bounded away from zero and infinity.

\begin{assumption}\label{ASS:relaxation_limit}
	$\widehat A_\ve,\widehat\tau_\ve\in\WW^{1,\infty}(\R)$ for every $\ve\in(0,1)$ and, given $\gamma\in[0,\infty)$, there exists $\kappa\in[0,\infty)$ with $\kappa\cdot\gamma=0$, such that $A_\ve:=\ve\widehat A_\ve$ and $\tau_\ve:=\ve^{\gamma-2-\kappa}\widehat\tau_\ve$ converge uniformly, i.e.
	\begin{equation}\label{EQ:bulk_relaxation_convergence}
		\norm{A_\ve-A}_{\CC(\R)}\to0\text{ and }\norm{\tau_\ve-\tau}_{\CC(\R)}\to0\text{ as }\ve\searrow0
	\end{equation}
	for certain $A,\tau\in\WW^{1,\infty}(\R)$. Moreover, there exist $\tau_*,\tau^*,A_*,A^*\in(0,\infty)$ such that
	\begin{subequations}\label{EQ:bulk_relaxation_uniform_epsilon_bound}
		\begin{align}
			A_*&\leq A_\ve(u)\leq A^*,\\
			\tau_*&\leq\tau_\ve(u)\leq\tau^*.
		\end{align}
	\end{subequations}
\end{assumption}
Note that the bounds \eqref{EQ:bulk_relaxation_uniform_epsilon_bound} are preserved in the limit, i.e.\ $A_*\leq A\leq A^*$ and $\tau_*\leq\tau\leq\tau^*$. Denote by $K$ the primitive of $A$ with $K(0)=0$. Since $K$ is strictly increasing, its inverse $K^{-1}$ exists.

In Proposition~\ref{PRO:energies_gamma_convergence}, we will show that, with this choice of $\widehat A_\ve$, $(\calE_\ve)_{\ve\in(0,1)}$ has a $\Gamma$-limit $\calE_0:\calH_m\to[0,\infty]$ given by
\[\calE_0(\bfu):=\begin{cases}
	\calECH(u)&\text{if }z=K(u)\text{ and }u\in\Hav^1(\Om)+m,\\
	+\infty&\text{otherwise},
\end{cases}\]
where
\begin{equation}\label{EQ:CH_energy}
	\calECH(u)=\int_\Om\Bigparen{\frac{1}{2}\abs{\nabla u}^2+F(u)}\dd x
\end{equation}
denotes the \textit{Ginzburg--Landau energy} or \textit{Modica-Mortola functional}. Note that the constraint $z=K(u)$ in the definition of $\calE_0$ is nonlinear as soon as $A$ is not constant, in which case the explicit form of any reasonable subdifferential of $\calE_0$ may be quite involved.

The effective dissipation potential $\calR_\mathrm{eff}$ will depend on the choice of parameters.
Formula~\eqref{EQ:dissipation_potential_epsilon} suggests that there are three non-trivial cases, depending on the positivity of $\kappa,\gamma\in[0,\infty)$:
\[\text{(CH) if $\gamma=0,\kappa>0$ $``(0,+)"$, \; (vCH) if $\gamma=0,\kappa=0$ $``(0,0)"$, \; (mAC) if $\gamma>0,\kappa=0$ $``(+,0)"$}.\]
The actual gradient systems are given by $(\Hdav+m,\calECH,\calRCH)$, $(\Lav^2(\Om)+m,\calECH,\calRmAC)$ and $(\Lav^2(\Om)+m,\calECH,\calRvCH)$, where
\begin{align*}
	\calRCH:\Havd&\to[0,\infty),&\calRCH(v)&:=\frac{1}{2}\norm{v}_\Havd^2,\\
	\calRmAC:(\Hav^1(\Om)+m)\times\Lav^2(\Om)&\to[0,\infty),&\calRmAC(u;v)&:=\frac{1}{2}\bigparen{A^2(u)\tau(u)v,v}_{\LL^2(\Om)},\\
	\calRvCH:(\Hav^1(\Om)+m)\times\Lav^2(\Om)&\to[0,\infty),&\calRvCH(u;v)&:=\calRCH(v)+\calRmAC(u;v).
\end{align*}
\smallskip

We are now in a position to state our final main result, which will be proved in Section~\ref{SEC:relaxation_limit}.

\begin{theorem}[Relaxation limit]\label{TH:relaxation_limit}
	Assume $(\bfu_\ve)_{\ve\in(0,1)}\subset\AClocZ$ is a family of gradient-flow solutions to $(\calH_m,\calE_\ve,\calR_\ve^{(\gamma,\kappa)})$ that satisfy the energy-dissipation inequality \eqref{EQ:edi_general_definition}.
    Let the initial conditions $\bfu_\ve(0)=:\bfu_\ve^0\in(\Hav^1(\Om)+m)\times\LL^2(\Om)$ be well-prepared, i.e.\ assume that there exists $u_0^0\in\Hav^1(\Om)+m$ such that
	\begin{equation}\label{EQ:init_well_prepared}
		\bfu_\ve^0\to(u_0^0,K(u_0^0))\text{ in }\calH_m,\quad\calE_\ve(\bfu_\ve^0)\to\calECH(u_0^0),\quad\ve\searrow0.
	\end{equation}
	Then, for every sequence $\vek\searrow0$, $k\to\infty$, there exists a subsequence (not relabelled) and
    $u_0\in\CC([0,\infty);\LL^2(\Om))\cap\LL^\infty([0,\infty);\HH^1(\Om))\cap\Lloc^2([0,\infty);\HH^2(\Om))$ such that
	\begin{subequations}\label{EQ:relaxation_convergences}
		\begin{empheq}[left=\empheqlbrace]{align}
			u_\vek\to u_0&\quad\text{in }\CC_\mathrm{loc}([0,\infty);\LL^2(\Om)),\label{EQ:relaxation_u_convergence}\\
			z_\vek\to K(u_0)&\quad\text{in }\CC_\mathrm{loc}([0,\infty);\LL^2(\Om)),\label{EQ:relaxation_z_convergence}\\
			\calE_\vek(\bfu_\vek(t))\to\calECH(u_0(t))&\quad\text{for all }t\in[0,\infty),\label{EQ:relaxation_energy_convergence}
		\end{empheq}
	\end{subequations}
	$u_0$ has a time derivative in the space
	\[\dot u_0\in\begin{cases}
		\LL^2([0,\infty);\Havd)&\text{if }(\gamma,\kappa)=(0,+),\\
		\LL^2([0,\infty);\Lav^2(\Om))&\text{if }\kappa=0
	\end{cases}\]
	with convergence
	\begin{equation}\label{EQ:relaxation_time_derivative_convergence}
		\begin{cases}
			\dot u_\vek\to\dot u_0\text{ in }\Lloc^2([0,\infty);\Havd)&\text{if }\gamma=0,\\
			\dot z_\vek\to A(u_0)\dot u_0\text{ in }\Lloc^2([0,\infty);\Lav^2(\Om))&\text{if }\kappa=0,
		\end{cases}
	\end{equation}
	and $u_0$ is a gradient-flow solution, also satisfying \eqref{EQ:edb_general_definition}, of one of the three gradient systems
	\[\begin{cases}
		(\Havd+m,\calECH,\calRCH)&\text{if }(\gamma,\kappa)=(0,+),\\
		(\Lav^2(\Om)+m,\calECH,\calRvCH)&\text{if }(\gamma,\kappa)=(0,0),\\
        (\Lav^2(\Om)+m,\calECH,\calRmAC)&\text{if }(\gamma,\kappa)=(+,0).
	\end{cases}\]
\end{theorem}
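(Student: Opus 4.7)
The plan is to extract uniform a priori estimates from the $\ve$-dependent EDI, obtain compactness of $(\bfu_\ve)$, pass to the $\liminf$ in the EDI to produce an EDI for the limit, and then upgrade it to an EDB for the appropriate effective gradient system; the strong convergences of the time derivatives and the pointwise energy convergence will come out of matching the $\liminf$ against the recovered EDB.

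First, the EDI~\eqref{EQ:edi_general_definition} for $(\calH_m,\calE_\ve,\calR_\ve^{(\gamma,\kappa)})$ together with well-preparedness and~\eqref{EQ:bulk_relaxation_uniform_epsilon_bound} yields, uniformly in $\ve$: boundedness of $u_\ve$ in $\LL^\infty([0,\infty);\HH^1(\Om))$; the pointwise-in-time bound $\|z_\ve-K_\ve(u_\ve)\|_{\LL^2(\Om)}\leq C\ve$, which combined with $K_\ve\to K$ uniformly forces $z_\ve-K(u_\ve)\to0$ in $\Lloc^\infty([0,\infty);\LL^2(\Om))$; and the dissipation estimate $\int_0^T\bigparen{\ve^\gamma\|\dot u_\ve\|_{\Havd}^2+\ve^\kappa(\tau_\ve(u_\ve)\dot z_\ve,\dot z_\ve)_{\LL^2}}\,\mathrm dr\leq C$ for every $T<\infty$. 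In each regime at least one of these dissipation terms is active ($\gamma=0$ or $\kappa=0$), producing H\"older-$\tfrac12$ equicontinuity of either $u_\ve$ in $\Havd$ or of $z_\ve$ in $\LL^2(\Om)$; combined with the $\HH^1$-bound on $u_\ve$ and Rellich compactness, an Arzel\`a-Ascoli argument delivers, along a subsequence, the strong convergences~\eqref{EQ:relaxation_u_convergence}--\eqref{EQ:relaxation_z_convergence} to a pair $(u_0,K(u_0))$.

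Next, I pass to the $\liminf$ in the EDI. The $\Gamma$-$\liminf$ property of $\calE_\ve$ from Proposition~\ref{PRO:energies_gamma_convergence} and $z_0=K(u_0)$ yield $\liminf_\ve\calE_\ve(\bfu_\ve(t))\geq\calECH(u_0(t))$. For the dissipation part, I identify the relevant weak limits: when $\gamma=0$, $\dot u_\ve\weakto\dot u_0$ in $\LL^2(\Havd)$; when $\kappa=0$, $\dot z_\ve\weakto A(u_0)\dot u_0$ in $\LL^2(\LL^2)$, where this identification uses the strong $\CC_\mathrm{loc}([0,\infty);\LL^2)$-convergence of $u_\ve$ from Step~1 together with Lipschitz continuity of $A$ to transfer the weak limit of $\dot z_\ve$ through the chain rule $\partial_t K_\ve(u_\ve)=A_\ve(u_\ve)\dot u_\ve$. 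Weak lower semicontinuity of the quadratic dissipation forms then yields: in regime $(0,+)$, the liminf $\geq\int_0^t(\calRCH(\dot u_0)+\calRCH^*(\mu_0))\,\mathrm dr$ -- the $\ve^\kappa$-weighted $z$-term vanishes while its dual $\ve^{-\kappa}\tau_\ve^{-1}\|\xi_\ve\|_{\LL^2}^2$ forces the $z$-component of $\bfmu_\ve$ to be $o(1)$ -- and in regimes $(0,0)$ and $(+,0)$ the analogous bounds involving $\calRvCH+\calRvCH^*$ and $\calRmAC+\calRmAC^*$, respectively. Combining these with $\calE_\ve(\bfu_\ve^0)\to\calECH(u_0^0)$ produces an EDI for $u_0$ in the effective system.

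The three effective systems $(\Havd+m,\calECH,\calRCH)$, $(\Lav^2(\Om)+m,\calECH,\calRvCH)$ and $(\Lav^2(\Om)+m,\calECH,\calRmAC)$ are driven by a $\lambda$-convex lower semicontinuous functional on a Hilbert space (shifting $F$ by $\tfrac\beta2\|\cdot\|_{\LL^2}^2$ exposes the convex structure), so the classical Brezis--Komura chain rule applies and the EDI for $u_0$ promotes to the EDB~\eqref{EQ:edb_general_definition}. Comparing this equality with the liminf lower bound above forces $\limsup$ and $\liminf$ to coincide, which in turn upgrades the weak $\LL^2$-convergences to the strong ones claimed in~\eqref{EQ:relaxation_time_derivative_convergence} and forces the pointwise energy convergence~\eqref{EQ:relaxation_energy_convergence}. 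The principal obstacle in this scheme is the rigorous identification $\dot z=A(u_0)\dot u_0$ in the $\kappa=0$ cases: because the limiting constraint $z=K(u)$ is nonlinear in $u$, passing this relation through weak limits hinges on the strong $\CC_\mathrm{loc}([0,\infty);\LL^2)$-convergence of $u_\ve$ secured in Step~1 and Lipschitz continuity of $A$, and any weakening of this strong convergence would disrupt both the liminf bound on $\calR_\ve$ and the subsequent matching with the limit EDB.
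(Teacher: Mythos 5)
Your proposal follows the EDP-convergence playbook — pass to the $\liminf$ in the $\ve$-dependent EDI, obtain an effective EDI for $u_0$, then upgrade to an EDB via the chain rule for the semiconvex $\calECH$. The paper deliberately avoids this route (see Remark~\ref{REM:relation_pE_EDB_EDP_convergence}) and instead passes to the limit directly in the gradient-flow equation, and there is a genuine reason for this choice that your scheme runs into: to write down an EDI for the effective one-component system that the chain rule can act on, you need the object in the dual dissipation slot to be the Fr\'echet subgradient $\delta\calECH(u_0)$, i.e.\ $\mu_0\in\subDF\calECH(u_0)$ pointwise a.e. In the $\kappa=0$ regimes this is not the raw weak limit of $\mu_\ve$ (which carries an extra $-A(u_0)\xi$ from the bulk-stress coupling), and the raw $\liminf$ of $\int(\calR_\ve+\calR^*_\ve)$ only produces $\int 2\calR_\mathrm{eff}(u_0;\dot u_0)$. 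Recasting this as $\int(\calR_\mathrm{eff}+\calR^*_\mathrm{eff}(\delta\calECH(u_0)))$ already presumes $-\D_\bfv\calR_\mathrm{eff}(u_0;\dot u_0)=\delta\calECH(u_0)$, which is precisely the effective GFE you are trying to prove. The paper resolves this via the strong--weak closure of the $\ve$-dependent subdifferentials (Lemma~\ref{LEM:limit_subdifferential}, Corollary~\ref{COR:evolutionary_epsilon_limit_subdifferential}), which in turn requires the uniform $\HH^2$ bound on $u_\ve$ (Lemma~\ref{LEM:W_epsilon_estimate}); none of these ingredients appear in your proposal, so the ``effective EDI'' step has a circularity that you do not close.

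There are also secondary omissions tied to the same missing ingredients: the claimed regularity $u_0\in\Lloc^2([0,\infty);\HH^2(\Om))$ is never derived (the paper gets it from Lemma~\ref{LEM:W_epsilon_estimate}); the identification $\dot z_0=A(u_0)\dot u_0$ is not a pointwise manipulation but needs the Sobolev--Bochner chain rule (Lemma~\ref{LEM:sobolev_bochner_chain_rule}), which itself uses that $\HH^2$ regularity; and in the $(+,0)$ case $\dot u_0$ does not exist a priori — the paper recovers it from $\dot z_0$ via $K^{-1}$, an argument your draft skips over. Your Steps~1 and~2 (uniform bounds, Ascoli compactness, $\Gamma$-liminf, $z_\ve-K_\ve(u_\ve)=O(\ve)$) and the final matching argument (comparing the EDB against the liminf to force strong convergence of the time derivatives and pointwise energy convergence) are correct and coincide with the paper's Steps~1, 2 and~4; the gap is confined to the middle step, where you should replace the ``liminf the EDI'' argument by a subdifferential-closure argument together with passage to the limit in the GFE itself.
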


\begin{remark}[Well-prepared data]
    Thanks to the $\Gamma$-convergence of the energies (cf.\ Proposition~\ref{PRO:energies_gamma_convergence}), the well-preparedness~\eqref{EQ:init_well_prepared} can be achieved for any $(u,K(u))$ with $u\in\Hav^1(\Om)+m$. 
\end{remark}

\begin{remark}[Relation to EDB- and EDP-convergence]\label{REM:relation_pE_EDB_EDP_convergence}
	In the study of singular limits of gradient systems, the notions of \textit{EDB-convergence} and \textit{EDP-convergence} are commonly encountered in the literature, both falling under the umbrella of \textit{Evolutionary $\Gamma$-convergence}.
    The core idea of EDB-convergence is to establish the $\Gamma$-convergences $\calE_\ve\Gato\calE_\mathrm{eff}$ and $\calR_\ve^{(\gamma,\kappa)}\Gato\calR_\mathrm{eff}$ to certain effective $\calE_\mathrm{eff}$ and $\calR_\mathrm{eff}$.
    Then, assuming that the approximate solutions $\bfu_\ve$ comply with \eqref{EQ:edb_general_definition} (or \eqref{EQ:edi_general_definition}) and the initial conditions $(\bfu_\ve(0))_\ve$ are well-prepared, one can, under additional technical assumptions, extract suitable convergent subsequences.
    Combining liminf-estimates in \eqref{EQ:edb_general_definition} (or \eqref{EQ:edi_general_definition}) with a subdifferential closedness, and assuming an appropriate chain rule holds for $\calE_\mathrm{eff}$, the energy-dissipation principle mentioned in Subsection~\ref{SUBSEC:basic_concepts} guarantees that the limit is indeed a solution to $(\calH_m,\calE_\mathrm{eff},\calR_\mathrm{eff})$.
    The strategy behind EDP-convergence is similar, but instead of showing $\calR_\ve^{(\gamma,\kappa)}\Gato\calR_\mathrm{eff}$, one proves a liminf-estimate of the total dissipation functional.

    We highlight that our approach in Section~\ref{SEC:relaxation_limit} is mainly based on the GFE rather than on EDB- or EDP-convergence. One can show a liminf-estimate, which would yield the EDI for $(\calH_m,\calR_\mathrm{eff},\calE_\mathrm{eff})$.
    However, in order to conclude $-\D_{\bfv}\calR_\mathrm{eff}(\bfu;\dot\bfu)\in\subDL\calE_0(\bfu)$ with the limit subdifferential $\subDL\calE_0$ determined in Lemma~\ref{LEM:limit_subdifferential}, one would need to apply a suitable chain rule whose validity, without extra assumptions, is unclear at this stage.
    In the special case $A,\tau\equiv\text{const.}$, one has $\subDL\calE_0=\subDF\calE_0$, and a chain rule can be shown through a simple subgradient estimate. Hence, in the constant case, EDP-type arguments can be used.
    In the (vCH) case, the dissipation potential $\calR_\ve^{(\gamma,\kappa)}$ is even non-degenerate, and the limit can be taken using the (EVI) formulation.
\end{remark}

\section{Properties of the energy and its subdifferential}\label{SEC:preliminaries}
In this section, we establish key properties of $\calE$ and $\subDG\calE$ needed in the proof of Theorem~\ref{TH:well_posedness}. We emphasise that $\calE$ is not semiconvex. To see this, we note that the Hessian
\[\Hess\Bigparen{(u,z)\mapsto\frac{1}{2}(z-K(u))^2}=\begin{pmatrix}
	A^2(u)-A'(u)(z-K(u))&-A(u)\\
	-A(u)&1
\end{pmatrix},\]
viewed as a bilinear form, can not be bounded from below uniformly in $(u,z)$ unless $A'\equiv0$. Consequently, several valuable properties -- like the strong--weak closedness of the graph of $\subDG\calE$ or the chain rule -- do not come for free. We will therefore invest particular effort into a thorough investigation of the Gateaux subdifferential.

In Proposition~\ref{PRO:subdifferential} we characterise $\subDG\calE$, showing that the Gateaux subdifferential $\subDG\calE(\bfu)$ is single-valued and takes the precise form anticipated in \eqref{EQ:subdifferential_of_energy}. An additional observation is the subdifferential estimate \eqref{EQ:subdifferential_characterising_estimate}, which will play an important role in the upcoming proof of the chain rule. Another important estimate concerning $\subDG\calE$, namely \eqref{EQ:subdifferential_monotonicity}, is proved in Proposition~\ref{PRO:monotonicity_subdifferential}. This will be used to show that the absolutely continuous interpolants $(\whbfu_\kappa)_\kappa$ as defined in \eqref{EQ:piecewise_affine_interpolant} form a Cauchy sequence in the strong topology $\CC([0,T];\calH_m)$ for any $T<\infty$, and to derive the stability estimates \eqref{EQ:stability_estimate} and \eqref{EQ:ext_stability_estimate}. Proposition~\ref{PRO:monotonicity_subdifferential} is followed by an $\HH^2(\Om)$-estimate for the phase field variable $u$, the closedness properties of $\subDG\calE$ and $\D_{\bfv}\calR$, and, eventually, the chain rule Proposition~\ref{PRO:chain_rule}, whose proof is based on \eqref{EQ:subdifferential_characterising_estimate}.

Let us first recall two auxiliary results.

\begin{lemma}\label{LEM:interpolations}
	In the sense of Gelfand triples, we have
	\begin{align}
		\norm{u}_{\LL^2(\Om)}&\leq\norm{u}_{\Hav^1(\Om)}^{1/2}\norm{u}_\Havd^{1/2}&\text{for all }u&\in\Hav^1(\Om),\label{EQ:L2_interpolation}\\
		\norm{u}_{\Hav^1(\Om)}&\leq C\norm{u}_{\HH^2(\Om)}^{2/3}\norm{u}_\Havd^{1/3}&\text{for all }u&\in\Hav^2(\Om),\label{EQ:H1_interpolation}
	\end{align}
	where $C\in(0,\infty)$ only depends on $\Om$ and $d$.
\end{lemma}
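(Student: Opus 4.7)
Both estimates are classical Hilbert-scale interpolation inequalities.

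For (3.1), the plan is a direct Cauchy--Schwarz in the Gelfand triple. For $u\in\Hav^1(\Om)$, viewing $u$ as an element of $\Havd$ via the $\LL^2$-pairing, we have
\begin{equation*}
\norm{u}_{\LL^2(\Om)}^2=\product{u}{u}_{\Havd,\Hav^1(\Om)}\leq\norm{u}_\Havd\norm{u}_{\Hav^1(\Om)},
\end{equation*}
and taking the square root yields the estimate.

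For (3.2), I would combine the spectral decomposition of the Neumann Laplacian with elliptic regularity. Let $\{e_k\}_{k\geq 0}$ denote the $\LL^2$-orthonormal eigenbasis of $-\Delta_N$ with eigenvalues $0=\lambda_0<\lambda_1\leq\lambda_2\leq\ldots$, the eigenfunction $e_0$ being constant. For a zero-mean $u$ in the Neumann domain $D(-\Delta_N)$ with expansion $u=\sum_{k\geq 1}c_k e_k$, one computes $\norm{u}_{\Hav^1(\Om)}^2=\sum\lambda_k c_k^2$, $\norm{u}_\Havd^2=\sum\lambda_k^{-1}c_k^2$, and $\norm{-\Delta u}_{\LL^2(\Om)}^2=\sum\lambda_k^2 c_k^2$. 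The algebraic identity $\lambda_k c_k^2=(\lambda_k^2 c_k^2)^{2/3}(\lambda_k^{-1}c_k^2)^{1/3}$ together with H\"older's inequality with conjugate exponents $3/2$ and $3$ yields
\begin{equation*}
\norm{u}_{\Hav^1(\Om)}^2\leq\norm{-\Delta u}_{\LL^2(\Om)}^{4/3}\norm{u}_\Havd^{2/3},
\end{equation*}
and the elliptic regularity estimate $\norm{-\Delta u}_{\LL^2(\Om)}\leq C\norm{u}_{\HH^2(\Om)}$, valid on convex or $\CC^{1,1}$-domains by Assumption~\ref{ASS:general}, then proves (3.2) for every $u\in D(-\Delta_N)$.

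To pass from $u\in D(-\Delta_N)$ to arbitrary $u\in\Hav^2(\Om)$, I would decompose $u=u_1+u_2$, where $u_2$ solves an auxiliary elliptic problem designed to absorb the (possibly non-vanishing) Neumann trace of $u$, so that $u_1:=u-u_2\in D(-\Delta_N)$; trace theorems and elliptic regularity on the boundary layer then provide control of $u_2$ in each of the norms $\norm{\cdot}_{\HH^2(\Om)}$, $\norm{\cdot}_{\Hav^1(\Om)}$, and $\norm{\cdot}_\Havd$ in terms of $\norm{u}_{\HH^2(\Om)}$. Applying the previous step to $u_1$ and estimating $u_2$ directly gives the desired inequality. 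The main technical obstacle is precisely this extension step: since $D(-\Delta_N)$ is not dense in $\Hav^2(\Om)$ for the $\HH^2$-topology, a pure approximation argument is unavailable, and the boundary correction has to be constructed and estimated explicitly so that the interpolation exponents $2/3$ and $1/3$ are preserved.
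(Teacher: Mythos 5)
Your proof of \eqref{EQ:L2_interpolation} coincides with the paper's: the duality identity $\product{u}{u}_{\Havd,\Hav^1(\Om)}=(u,u)_{\LL^2(\Om)}$ together with Cauchy--Schwarz.

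For \eqref{EQ:H1_interpolation}, however, there is a genuine gap. Your spectral argument is correct on the domain of the Neumann Laplacian, i.e.\ for $u\in\HH^2(\Om)$ with $\nabla u\cdot\n=0$ on $\boundary{\Om}$ (and zero mean); for such $u$ one indeed has $\norm{\Delta u}_{\LL^2(\Om)}^2=\sum_k\lambda_k^2c_k^2$ and the H\"older step goes through. But for a general $u\in\Hav^2(\Om)$ with nonvanishing Neumann trace this spectral identity fails --- the series $\sum_k\lambda_k^2c_k^2$ is then $+\infty$ --- so the argument does not apply directly. Your proposed fix, decomposing $u=u_1+u_2$ with $u_2$ absorbing the Neumann trace so that $u_1$ lies in the Neumann domain, does not preserve the interpolation exponents: the correction $u_2$ is controllable only in terms of $\norm{u}_{\HH^2(\Om)}$ (via the trace theorem), not in terms of the weak norm $\norm{u}_\Havd$. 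After the decomposition one has $\norm{u_1}_\Havd\leq\norm{u}_\Havd+C\norm{u}_{\HH^2(\Om)}$, so applying the spectral interpolation to $u_1$ and estimating $u_2$ directly only gives $\norm{u}_{\Hav^1(\Om)}\leq C\bigparen{\norm{u}_{\HH^2(\Om)}^{2/3}\norm{u}_\Havd^{1/3}+\norm{u}_{\HH^2(\Om)}}$, which is strictly weaker than \eqref{EQ:H1_interpolation} whenever $\norm{u}_\Havd\ll\norm{u}_{\HH^2(\Om)}$. You correctly flag this as the main obstacle but do not resolve it.

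The paper sidesteps the spectral boundary issue entirely: it invokes the Gagliardo--Nirenberg inequality $\norm{\nabla u}_{\LL^2(\Om;\R^d)}\leq C\norm{u}_{\HH^2(\Om)}^{1/2}\norm{u}_{\LL^2(\Om)}^{1/2}$, valid for all $u\in\HH^2(\Om)$ on bounded Lipschitz domains with no boundary condition imposed on $u$, and then substitutes \eqref{EQ:L2_interpolation} for $\norm{u}_{\LL^2(\Om)}$. Rearranging the resulting inequality $\norm{u}_{\Hav^1(\Om)}\leq C\norm{u}_{\HH^2(\Om)}^{1/2}\norm{u}_{\Hav^1(\Om)}^{1/4}\norm{u}_\Havd^{1/4}$ yields \eqref{EQ:H1_interpolation} directly. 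This two-line argument is the route you should adopt.
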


\begin{lemma}\label{LEM:H2_Delta_estimate}
    There exists a constant $C\in(0,\infty)$ only depending on $\Om$ such that
	\begin{equation}\label{EQ:H2_Delta_estimate}
		\biggnorm{u-\avint{_\Om}u\dd x}_{\HH^2(\Om)}\leq C\norm{\Delta u}_{\LL^2(\Om)}
	\end{equation}
	for all $u\in\HH^2(\Om)$ with $\nabla u\cdot\n=0$ on $\boundary{\Om}$.
\end{lemma}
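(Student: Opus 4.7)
The plan is to reduce the inequality to the standard $H^2$-regularity estimate for the Neumann Laplacian and then absorb the lower-order norm on the right-hand side via Poincar\'e--Wirtinger. Set $v:=u-\avint_\Om u\dd x$, so that $v\in\HH^2(\Om)\cap\Hav^1(\Om)$, $\nabla v\cdot\n=0$ on $\boundary{\Om}$, and $\Delta v=\Delta u$ pointwise a.e. Since both norms on the left-hand side of~\eqref{EQ:H2_Delta_estimate} are unchanged under addition of a constant, it suffices to prove $\norm{v}_{\HH^2(\Om)}\le C\norm{\Delta v}_{\LL^2(\Om)}$.

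The first step is to invoke classical $H^2$-regularity for the inhomogeneous Neumann problem
\[
-\Delta v=-\Delta u=:g\in\LL^2(\Om),\qquad \nabla v\cdot\n=0\text{ on }\boundary{\Om},
\]
which yields the estimate
\[
\norm{v}_{\HH^2(\Om)}\le C_1\bigparen{\norm{\Delta v}_{\LL^2(\Om)}+\norm{v}_{\LL^2(\Om)}}
\]
with a constant $C_1$ depending only on $\Om$. For $\Om$ with $\CC^{1,1}$ boundary this is the standard Neumann-regularity result (e.g.\ Grisvard, Theorem~2.4.2.7, or Lions--Magenes), while for convex $\Om$ it is the celebrated convex-domain regularity (Grisvard, Theorem~3.2.1.3), using precisely the boundary-smoothness alternative allowed by Assumption~\ref{ASS:general}.

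The second step handles the $\LL^2$-term: since $v$ has zero mean, Poincar\'e--Wirtinger gives $\norm{v}_{\LL^2(\Om)}\le C_P\norm{\nabla v}_{\LL^2(\Om)}$, and testing $-\Delta v=g$ against $v$ and integrating by parts (using the no-flux condition to kill the boundary term) yields
\[
\norm{\nabla v}_{\LL^2(\Om)}^2=\int_\Om gv\dd x\le\norm{g}_{\LL^2(\Om)}\norm{v}_{\LL^2(\Om)}\le C_P\norm{g}_{\LL^2(\Om)}\norm{\nabla v}_{\LL^2(\Om)},
\]
so $\norm{v}_{\LL^2(\Om)}\le C_P^2\norm{\Delta v}_{\LL^2(\Om)}$. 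Substituting back gives $\norm{v}_{\HH^2(\Om)}\le C\norm{\Delta u}_{\LL^2(\Om)}$, which is the claim.

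There is no real obstacle here beyond citing the correct regularity theorem: the only point requiring care is that $H^2$-regularity up to the boundary for the Neumann problem really does hold under either of the two alternatives on $\Om$ stated in Assumption~\ref{ASS:general}, and a short remark pointing to Grisvard (or an equivalent reference) covers both cases simultaneously. No extra assumption on the mean of $g$ is needed since $g=\Delta v$ automatically satisfies $\int_\Om g\dd x=-\int_{\boundary{\Om}}\nabla v\cdot\n\dd\sigma=0$, so the Neumann problem is compatible, and the constant $C$ indeed only depends on $\Om$ (absorbing the dimension through the Sobolev constants).
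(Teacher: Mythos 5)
Your proof is correct and is essentially the paper's own argument: the a priori elliptic regularity estimate $\norm{v}_{\HH^2(\Om)}\le C(\norm{\Delta v}_{\LL^2(\Om)}+\norm{v}_{\LL^2(\Om)})$ (Grisvard, covering both the $\CC^{1,1}$ and convex cases) combined with Poincar\'e--Wirtinger and the integration-by-parts bound $\norm{\nabla v}_{\LL^2(\Om)}\le C_P\norm{\Delta v}_{\LL^2(\Om)}$ using the no-flux condition. No gaps; the remarks on compatibility of the Neumann data and on the two boundary-regularity alternatives are correct but not essential.
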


The interpolation inequality \eqref{EQ:L2_interpolation} follows from $\product{u}{u}_{\Havd,\Hav^1(\Om)}=(u,u)_{\LL^2(\Om)}$, and \eqref{EQ:H1_interpolation} can be readily deduced from $\norm{u}_{\Hav^1(\Om)}=\norm{\nabla u}_{\LL^2(\Om;\R^d)}\leq C\norm{u}_{\HH^2(\Om)}^{1/2}\norm{u}_{\LL^2(\Om)}^{1/2}$ (Nirenberg~\cite{Nirenberg_1959}) and \eqref{EQ:L2_interpolation}.
Lemma \ref{LEM:H2_Delta_estimate} is a combination of the a priori estimate from elliptic regularity (see \cite[Chapters II, III]{Grisvard_EllipticNonsmooth_2011} for instance) $\norm{u}_{\HH^2(\Om)}\leq C(\norm{\Delta u}_{\LL^2(\Om)}+\norm{u}_{\LL^2(\Om)})$ and Poincar\'e-Wirtinger's inequality $\norm{u-\avint{_\Om}u\dd x}_{\LL^2(\Om)}\leq\CPW\norm{\nabla u}_{\LL^2(\Om;\R^d)}\leq\CPW^2\norm{\Delta u}_{\LL^2(\Om)}$.

\begin{proposition}\label{PRO:energy_weakly_lower_semicontinuous}
	The energy $\calE:\calH_m\to[0,+\infty]$, defined in~\eqref{EQ:banach_energy_functional}, is weakly lower semicontinuous.
\end{proposition}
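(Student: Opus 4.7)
The plan is to extract from $\calE(\bfu_n)\le C$ and the weak convergence $\bfu_n\rightharpoonup\bfu$ in $\calH_m$ a sufficiently strong compactness for the phase field variable, and then to treat the three structurally distinct contributions in $\calE$ separately. Let $\bfu_n=(u_n,z_n)\rightharpoonup\bfu=(u,z)$ in $\calH_m$; I may assume $\liminf_n\calE(\bfu_n)<\infty$ and, upon passing to a subsequence, that $\sup_n\calE(\bfu_n)<\infty$ and equals the original $\liminf$. Using the decomposition \eqref{EQ:decomposition_F} of $F$, the convex subtangent bound $h(u)\ge h(0)+h'(0)u$, and the mean constraint $\int_\Om u_n\,dx=m|\Om|$, one has
\[\int_\Om F(u_n)\,dx\ge -\tfrac{\beta}{2}\|u_n\|_{\LL^2(\Om)}^2+C_0\]
for a constant $C_0\in\R$. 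Combining this with the interpolation \eqref{EQ:L2_interpolation}, which yields $\|u_n-m\|_{\LL^2(\Om)}^2\le\|u_n-m\|_{\Hav^1(\Om)}\|u_n-m\|_\Havd$, and the boundedness of $\|u_n-m\|_\Havd$ (which follows from $u_n\rightharpoonup u$ in $\Hdav\cong\Havd$ via \eqref{EQ:dual_average_isoiso}), a Young inequality absorbs the quadratic term and produces a uniform $\HH^1$-bound on $u_n$. Rellich--Kondrachov then provides, along a further subsequence, $u_n\to u$ strongly in $\LL^2(\Om)$ and a.e.\ in $\Om$, and $u_n\rightharpoonup u$ weakly in $\HH^1(\Om)$ by uniqueness of the weak limit.

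With this compactness at hand, the three contributions are handled as follows. The Dirichlet term is weakly lower semicontinuous under weak $\HH^1$-convergence by classical convexity. For the potential, I split
\[\int_\Om F(u_n)\,dx=\int_\Om\bigl(h(u_n)-h(0)-h'(0)u_n\bigr)\,dx+\int_\Om\bigl(h(0)+h'(0)u_n\bigr)\,dx-\tfrac{\beta}{2}\int_\Om u_n^2\,dx;\]
the first integrand is non-negative by convexity, so Fatou combined with the a.e.\ convergence yields the desired $\liminf$-inequality; the second is linear and passes under strong $\LL^1$-convergence; the third is continuous under strong $\LL^2$-convergence. Thus $\int_\Om F(u)\,dx\le\liminf_n\int_\Om F(u_n)\,dx$.

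It remains to treat the coupling term. Since $K'=A$ is bounded by $A^*$, $K$ is Lipschitz, so $K(u_n)\to K(u)$ strongly in $\LL^2(\Om)$; combined with $z_n\rightharpoonup z$ weakly in $\LL^2(\Om)$, this gives $z_n-K(u_n)\rightharpoonup z-K(u)$ weakly in $\LL^2(\Om)$, and lower semicontinuity of the squared $\LL^2$-norm closes this term. Adding the three estimates (and noting that the $-\tfrac{\beta}{2}\int u_n^2$ contribution converges, not merely as a $\liminf$) yields $\calE(\bfu)\le\liminf_n\calE(\bfu_n)$, as required. The only subtlety worth flagging is the step producing the $\HH^1$-bound on $u_n$: since $F$ need not be bounded below and the $\Havd$-norm does not control the $\LL^2$-norm, the interplay between the subtangent bound, the mean constraint, and the interpolation \eqref{EQ:L2_interpolation} is essential. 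Once the $\HH^1$-bound is in place, the remainder of the argument proceeds along standard lines.
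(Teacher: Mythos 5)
Your proof is correct and follows the same broad strategy as the paper's (compactness of the phase-field variable plus weak lower semicontinuity of the quadratic terms and Fatou for the potential), but you add a genuine refinement at two places. First, to get the uniform $\HH^1$-bound on $u_n$ from $\calE(\bfu_n)\le C$, the paper simply asserts it; this is only immediate under the tacit standing convention that the integrand $F$ is pointwise bounded below (consistent with the declaration $\calE\colon\calH_m\to[0,+\infty]$ and used again in the coercivity step of Proposition~\ref{PRO:solvability_direct_problem}). You instead derive the bound from the stated hypothesis $\inf f'\ge-\beta$ alone, via the subtangent inequality for the convex part $h$, the mean constraint, the interpolation \eqref{EQ:L2_interpolation}, and the weak-convergence bound on $\|u_n-m\|_\Havd$ — this absorbs the $-\tfrac{\beta}{2}\|u_n\|_{\LL^2}^2$ contribution by Young and is exactly the extra care needed when $F$ is allowed to be unbounded below. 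Second, the paper applies Fatou to $F(u_{n_k})$ directly, which again presupposes a pointwise lower bound on $F$; your three-way split $F=(h-h(0)-h'(0)u)+(h(0)+h'(0)u)-\tfrac{\beta}{2}u^2$ isolates the nonnegative convex remainder (where Fatou applies), a linear term (passing under strong $\LL^1$-convergence), and a quadratic term (passing under strong $\LL^2$-convergence), and the $\liminf$ of the sum is then handled correctly since the latter two converge. The rest — Rellich, $K$ Lipschitz hence $K(u_n)\to K(u)$ in $\LL^2$, weak l.s.c.\ of the squared $\LL^2$-norm for $z_n-K(u_n)$ and of the Dirichlet term — matches the paper. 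In short: same route, but your version closes the (implicit) reliance on $F$ being bounded below.
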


\begin{proof}
	Let $(\bfu_n)_{n\in\N}=(u_n,z_n)_{n\in\N}\subset\calH_m$, $\bfu=(u,z)\in\calH_m$ such that $\bfu_n\weakto\bfu$ for $n\to\infty$ in $\calH_m$. If $\liminf_{n\to\infty}\calE(\bfu_n)=+\infty$ then there is nothing to do. Otherwise, there exists a subsequence $(\bfu_{n_k})_{k\in\N}\subseteq(\bfu_n)_{n\in\N}$ such that $\liminf_{n\to\infty}\calE(\bfu_n)=\lim_{k\to\infty}\calE(\bfu_{n_k})\in[0,\infty)$ and $\bfu_{n_k}\in\dom(\calE)$ for any $k$. In particular, $(\calE(\bfu_{n_k}))_{k\in\N}$ is bounded.

	From there we get that $(\nabla u_{n_k})_{k\in\N}$ is bounded in $\LL^2(\Om;\R^d)$ and $(z_{n_k}-K(u_{n_k}))_{k\in\N}$ is bounded in $\LL^2(\Om)$. Since $\avint{_\Om}u_{n_k}\dd x=m$ for all $k\in\N$, reflexivity of $\HH^1(\Om)$ and $\LL^2(\Om)$ and uniqueness of weak limits imply $(u_{n_k},z_{n_k})\weakto(u,z)$ in $\HH^1(\Om)\times\LL^2(\Om)$. Moreover, Rellich--Kondrachov provides $u_{n_k}\to u$ in $\LL^2(\Om)$ and almost everywhere in $\Om$ for non-relabelled subsequence. Since $K$ is continuous and grows linearly, we also infer $K(u_{n_k})\to K(u)$ in $\LL^2(\Om)$, and continuity of $F$ yields $F(u_{n_k})\to F(u)$ almost everywhere in $\Om$. Thus, weak lower semicontinuity
	\[\norm{\nabla u}_{\LL^2(\Om;\R^d)}^2\leq\liminf_{k\to\infty}\norm{\nabla u_{n_k}}_{\LL^2(\Om;\R^d)}^2,\quad\norm{z-K(u)}_{\LL^2(\Om)}^2\leq\liminf_{k\to\infty}\norm{z_{n_k}-K(u_{n_k})}_{\LL^2(\Om)}^2\]
	and Fatou's lemma $\int_\Om F(u)\dd x\leq\liminf_{k\to\infty}\int_\Om F(u_{n_k})\dd x$ yield $\calE(\bfu)\leq\liminf_{n\to\infty}\calE(\bfu_n)$.
\end{proof}

\begin{proposition}[Gateaux subdifferential $\subDG\calE$]\label{PRO:subdifferential}
	Define
	\[\mathcal{A}:=\biggset{(u,z)\in(\Hav^2(\Om)+m)\times\LL^2(\Om)}{\begin{array}{c}
			\nabla u\cdot\n=0\text{ on }\boundary{\Om}\text{ and}\\
			-\Delta u+f(u)-A(u)(z-K(u))\in\HH^1(\Om)
		\end{array}}\]
	and, for any $\bfu=(u,z)\in\mathcal{A}$,
	\begin{equation}\label{EQ:subdifferential_delta_definition}
		\delta\calE(\bfu):=\begin{pmatrix}
			-\Delta u+f(u)-A(u)(z-K(u))-\fraka(u,z)\\
			z-K(u)
		\end{pmatrix},
	\end{equation}
	where $\fraka(u,z)=\avint{_\Om}(f(u)-A(u)(z-K(u)))\dd x$. Observe that $\delta\calE(\bfu)\in H^*$ for every $\bfu\in\mathcal{A}$. Then:
	\begin{enumerate}
		\item $\dom(\subDG\calE)=\mathcal{A}$ and $\subDG\calE(\bfu)=\{\delta\calE(\bfu)\}$,
		\item if $\bfu=(u,z)\in\mathcal{A}$ and in addition $z\in\LL^\infty(\Om)$, then the subgradient estimate
		\begin{equation}\label{EQ:subdifferential_characterising_estimate}
			\calE(\bfv)\geq\calE(\bfu)+\product{\delta\calE(\bfu)}{\bfv-\bfu}_{H^*,H}+\lambda(\bfu)\norm{\bfv-\bfu}_H^2
		\end{equation}
        holds for all $\bfv\in\calH_m$, where $\lambda(u,z):=-\frac{1}{8}\bigparen{\beta+\norm{A'}_\infty\norm{z-K(u)}_{\LL^\infty(\Om)}}^2$.
	\end{enumerate}
\end{proposition}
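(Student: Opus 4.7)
The plan is to first prove the subgradient estimate in (ii), since its algebraic pieces also drive the identification step in (i). Writing $\calE(\bfv) - \calE(\bfu) - \product{\delta\calE(\bfu)}{\bfv - \bfu}_{H^*, H}$ as a sum of three contributions, the gradient term yields exactly $\tfrac12\|\nabla(v-u)\|_{\LL^2(\Om)}^2$. For the double-well piece I exploit the decomposition $F = h - \tfrac{\beta}{2}u^2$ with convex $h$: pointwise, $h(v) - h(u) - h'(u)(v-u) \geq 0$ leaves a contribution $\geq -\tfrac{\beta}{2}(v-u)^2$. For the bulk-stress piece, introducing $a := z - K(u)$ and $b := y - K(v)$, an elementary identity reduces the remainder to $\tfrac12(b-a)^2 - a\bigl(K(v) - K(u) - A(u)(v-u)\bigr)$; Taylor's theorem on $K$ yields $|K(v) - K(u) - A(u)(v-u)| \leq \tfrac12\|A'\|_\infty(v-u)^2$, and $z \in \LL^\infty(\Om)$ (hence $a \in \LL^\infty(\Om)$) provides the pointwise lower bound $-\tfrac12\|A'\|_\infty\|z-K(u)\|_{\LL^\infty(\Om)}(v-u)^2$ after dropping the non-negative $\tfrac12(b-a)^2$ term.

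Summing with $C := \beta + \|A'\|_\infty\|z-K(u)\|_{\LL^\infty(\Om)}$, a lower bound of the form $\tfrac12\|v-u\|_{\Hav^1(\Om)}^2 - \tfrac{C}{2}\|v-u\|_{\LL^2(\Om)}^2$ emerges. Invoking the Gelfand interpolation $\|w\|_{\LL^2(\Om)}^2 \leq \|w\|_{\Hav^1(\Om)}\|w\|_{\Havd}$ from Lemma~\ref{LEM:interpolations} together with Young's inequality in the sharp form $\tfrac{C}{2}\alpha\beta \leq \tfrac{1}{2}\alpha^2 + \tfrac{C^2}{8}\beta^2$ (applied to $\alpha = \|v-u\|_{\Hav^1(\Om)}$, $\beta = \|v-u\|_{\Havd}$) absorbs the $\|v-u\|_{\Hav^1(\Om)}^2$-term exactly and leaves $-\tfrac{C^2}{8}\|v-u\|_{\Havd}^2 \geq -\tfrac{C^2}{8}\|\bfv - \bfu\|_H^2$, producing precisely the stated $\lambda$.

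For (i), the same expansions combined with dominated convergence (justified by the $p$-growth of $f$ and $\HH^1(\Om) \hookrightarrow \LL^{p+1}(\Om)$) give for any $\bfv = (v,y) \in \Hav^1(\Om) \times \LL^2(\Om)$
\[
\lim_{t\searrow 0}\frac{\calE(\bfu+t\bfv) - \calE(\bfu)}{t} = \int_\Om\bigl(\nabla u\cdot\nabla v + (f(u) - A(u)(z-K(u)))v + (z-K(u))y\bigr)dx,
\]
while the liminf is $+\infty$ for directions $\bfv$ outside this set (for which $\calE(\bfu + t\bfv) \equiv +\infty$). If $\bfu \in \mathcal{A}$, an integration by parts using $\nabla u \cdot \n = 0$, combined with $\int_\Om v\,dx = 0$ to reinstate the constant $\fraka$, identifies the right-hand side with $\product{\delta\calE(\bfu)}{\bfv}_{H^*, H}$, yielding $\delta\calE(\bfu) \in \subDG\calE(\bfu)$. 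Conversely, for $\bfmu = (\mu, \xi) \in \subDG\calE(\bfu)$, the bilateral directional tests force $\xi = z - K(u) \in \LL^2(\Om)$ and the weak elliptic identity $-\Delta u + f(u) = \mu + A(u)(z-K(u)) + \fraka$ with Neumann boundary condition.

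The one genuine obstacle is promoting this weak equation to $u \in \HH^2(\Om)$, which is nontrivial at the critical Sobolev growth $p = 5$ in 3D, where a priori $f(u) \in \LL^{6/5}(\Om)$ only. I would resolve this by rewriting the equation as $-\Delta u + h'(u) = \beta u + \mu + A(u)(z-K(u)) + \fraka$, whose right-hand side now lies in $\LL^2(\Om)$, and exploiting the monotonicity of $h'$ via a Moreau--Yosida regularisation: testing the regularised equation with $-\Delta u_\lambda$, monotonicity of $h'_\lambda$ delivers $\int_\Om h'_\lambda(u_\lambda)(-\Delta u_\lambda)\,dx \geq 0$, so $\|\Delta u_\lambda\|_{\LL^2(\Om)} \leq \|\text{RHS}\|_{\LL^2(\Om)}$ uniformly in $\lambda$. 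Passing to the limit and applying Lemma~\ref{LEM:H2_Delta_estimate} gives $u \in \HH^2(\Om)$, whence $h'(u) \in \LL^2(\Om)$, $f(u) \in \LL^2(\Om)$, and $-\Delta u + f(u) - A(u)(z-K(u)) \in \HH^1(\Om)$, confirming $\bfu \in \mathcal{A}$ with $\bfmu = \delta\calE(\bfu)$.
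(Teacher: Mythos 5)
Your proposal is correct, and most of it follows the paper's route closely: the algebraic decomposition of $\calE(\bfv)-\calE(\bfu)-\langle\delta\calE(\bfu),\bfv-\bfu\rangle$ into the gradient term, the convex remainder from $h$, and the bulk-stress remainder (your identity $\tfrac12(b-a)^2 - a\bigl(K(v)-K(u)-A(u)(v-u)\bigr)$ is exactly the paper's rearrangement), the Taylor bound on $K$, and the interpolation--Young step yielding $\lambda(\bfu)$ are all as in the paper. The directional-derivative computations identifying $\xi=z-K(u)$ and the weak elliptic equation for $u$ also match. The one place where you genuinely diverge is the $\HH^2$-promotion: the paper rewrites the equation as $-\Delta u + h'(u) = \ell$ with $\ell\in\LL^2(\Om)$ and first runs the Stampacchia truncation method to obtain $u\in\LL^\infty(\Om)$ (valid since $\ell\in\LL^r$ with $r=2>d/2$ for $d\le3$), which makes $h'(u)\in\LL^\infty$ and hence $-\Delta u\in\LL^2$ by subtraction, before invoking elliptic regularity. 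You instead keep the data in $\LL^2$, pass through Moreau--Yosida regularised problems $-\Delta u_\lambda + h'_\lambda(u_\lambda)=\ell$, and test with $-\Delta u_\lambda$ to exploit $\int h''_\lambda(u_\lambda)|\nabla u_\lambda|^2\,\mathrm dx\ge0$ (after integrating by parts with the Neumann condition), obtaining $\|\Delta u_\lambda\|_{\LL^2}\le\|\ell\|_{\LL^2}$ uniformly and passing to the limit. Both are correct. The paper's Stampacchia route gives the extra information $u\in\LL^\infty$ directly (used later, e.g.\ in part (ii), though there $\HH^2\hookrightarrow\LL^\infty$ recovers it), and relies on a quotable textbook result; your energy-estimate route is shorter and bypasses the $\LL^\infty$ stage entirely, at the cost of a (standard, but to-be-justified) convergence argument $u_\lambda\to u$ for the regularised family via uniqueness and maximal monotonicity. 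Either is a legitimate proof; just be aware the paper needs to argue that the weak formulation holds on $\Hav^1(\Om)$ only, so the right-hand side $\ell$ is determined up to the additive constant $\fraka(u,z)$, a point you should keep explicit when fixing the data $g=\ell$ for the regularised problem.
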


\begin{proof}
	Recalling $F(u)=h(u)-\frac{\beta}{2}u^2$, we compute for $\bfu=(u,z)\in\mathcal{A}$, $\bfv=(v,y)\in\dom(\calE)$
	\begin{align}\label{EQ:subdifferential_preliminary_estimate}
		\calE(\bfv)&-\calE(\bfu)-\product{\delta\calE(\bfu)}{\bfv-\bfu}_{H^*,H}\notag\\
		&=\int_\Om\Bigparen{\frac{1}{2}\abs{\nabla v}^2+F(v)+\frac{1}{2}(y-K(v))^2}\dd x\notag\\
		&\quad-\int_\Om\Bigparen{\frac{1}{2}\abs{\nabla u}^2+F(u)+\frac{1}{2}(z-K(u))^2}\dd x\notag\\
		&\quad-\int_\Om\Bigparen{\bigparen{-\Delta u+f(u)-A(u)(z-K(u))}(v-u)+\bigparen{z-K(u)}(y-z)}\dd x\notag\displaybreak[1]\\
		&=\frac{1}{2}\int_\Om\abs{\nabla(v-u)}^2\dd x+\int_\Om\bigparen{\underbrace{h(v)-h(u)-h'(u)(v-u)}_{\geq0\text{ since }h\text{ is convex}}}\dd x\notag\\
		&\quad-\frac{\beta}{2}\int_\Om(v-u)^2\dd x+\frac{1}{2}\int_\Om\bigparen{(y-K(v))-(z-K(u))}^2\dd x\notag\\
		&\quad+\int_\Om(z-K(u))\bigparen{K(u)-K(v)+A(u)(v-u)}\dd x\notag\displaybreak[1]\\
		&\geq\frac{1}{2}\norm{v-u}_{\Hav^1(\Om)}^2-\frac{\beta}{2}\norm{v-u}_{\LL^2(\Om)}^2-\frac{1}{2}\norm{A'}_\infty\int_\Om\abs{z-K(u)}\abs{v-u}^2\dd x.
	\end{align}
	Here, in the first equation we inserted the definitions of $\calE$ and $\delta\calE(\bfu)$, in the second we rearranged the terms which is well-defined since $K(u),K(v)\in\LL^2(\Om)$ and $h'(u)\in\LL^{6/5}(\Om)$ by \eqref{EQ:growth_condition_f} and the Sobolev embedding $\HH^1(\Om)\hookrightarrow\LL^6(\Om)$, and in the third equation we used
	\[K(v)-K(u)-A(u)(v-u)=\int_u^v(v-\zeta)A'(\zeta)\dd\zeta\leq\frac{1}{2}\norm{A'}_\infty(u-v)^2.\]
	Now we turn to the proof of the statements (i) and (ii).

	\begin{enumerate}
		\item We first show that $\delta\calE(\bfu)\in\subDG\calE(\bfu)$ for $\bfu\in\mathcal{A}$ via definition \eqref{EQ:gateaux_subdifferential_definition}. So, let $\bfv\in H$. If $\bfv\notin\Hav^1(\Om)\times\LL^2(\Om)$, then $\bfu+t\bfv\notin\dom(\calE)$ for every $t>0$ and there is nothing to do. Otherwise, i.e.\ if $\bfv=(v,y)\in\Hav^1(\Om)\times\LL^2(\Om)$, then $\bfu+t\bfv\in\dom(\calE)$. From \eqref{EQ:subdifferential_preliminary_estimate} we deduce
		\begin{multline}\label{EQ:gateaux_subdifferential_computation}
			\calE(\bfu+t\bfv)-\calE(\bfu)-t\product{\delta\calE(\bfu)}{\bfv}_{H^*,H}\\
			\quad\geq\frac{t^2}{2}\biggparen{\norm{v}_{\Hav^1(\Om)}^2-\beta\norm{v}_{\LL^2(\Om)}^2-\norm{A'}_\infty\norm{z-K(u)}_{\LL^2(\Om)}\norm{v}_{\LL^4(\Om)}^2},
		\end{multline}
		and the right-hand side is real-valued due to Sobolev embedding $\HH^1(\Om)\hookrightarrow\LL^4(\Om)$. Hence, after dividing \eqref{EQ:gateaux_subdifferential_computation} by $t$, we obtain in the limit $t\searrow0$ the desired estimate from \eqref{EQ:gateaux_subdifferential_definition} and we conclude $\delta\calE(\bfu)\in\subDG\calE(\bfu)$ for $\bfu\in\mathcal{A}$.

		It remains to show $\dom(\subDG\calE)\subseteq\mathcal{A}$ and $\subDG\calE(\bfu)\subseteq\{\delta\calE(\bfu)\}$. To this end, we let $\bfu\in\dom(\subDG\calE)$ and $\bfmu=(\mu,\xi)\in\subDG\calE(\bfu)$, aiming to show $\bfu\in\mathcal{A}$ and $\bfmu=\delta\calE(\bfu)$. By definition of $\subDG\calE(\bfu)$ we have for all $\bfv\in H$
		\begin{equation}\label{EQ:subdifferential_definition_inserted}
			\liminf_{t\searrow0}\frac{\calE(\bfu+t\bfv)-\calE(\bfu)}{t}\geq\product{\bfmu}{\bfv}_{H^*,H}\geq\limsup_{t\nearrow0}\frac{\calE(\bfu+t\bfv)-\calE(\bfu)}{t}.
		\end{equation}
		Since $\bfu+t\bfv\in\dom(\calE)$ for every $t\in\R$ and $\bfv=(v,y)\in\Hav^1(\Om)\times\LL^2(\Om)$, we compute
		\begin{align*}
			&\quad\frac{\calE(\bfu+t\bfv)-\calE(\bfu)}{t}\\
			&=\frac{1}{t}\int_\Om\Bigparen{\frac{1}{2}\abs{\nabla(u+tv)}^2+F(u+tv)+\frac{1}{2}(z+ty-K(u+tv))^2}\dd x\\
			&\quad-\frac{1}{t}\int_\Om\Bigparen{\frac{1}{2}\abs{\nabla u}^2+F(u)+\frac{1}{2}(z-K(u))^2}\dd x\\
			&=\int_\Om\nabla u\cdot\nabla v\dd x+\frac{t}{2}\int_\Om\abs{\nabla v}^2\dd x+\int_\Om\frac{F(u+tv)-F(u)}{t}\dd x\\
			&\quad+\int_\Om\frac{(K(u+tv))^2-(K(u))^2}{2t}\dd x-\int_\Om\frac{K(u+tv)-K(u)}{t}z\dd x\\
			&\quad+\int_\Om(z-K(u+tv))y\dd x+\frac{t}{2}\int_\Om y^2\dd x.
		\end{align*}
		Owing to the $p$-growth of $f$ from \eqref{EQ:growth_condition_f} and the linear growth of $K$, we can take the limits $t\searrow0$ and $t\nearrow0$. In combination with \eqref{EQ:subdifferential_definition_inserted} we conclude that, for every $\bfv\in\Hav^1(\Om)\times\LL^2(\Om)$,
		\begin{multline}\label{EQ:determine_frechet_subdifferential}
			\product{v}{\mu}_{\Havd,\Hav^1(\Om)}+(\xi,y)_{\LL^2(\Om)}=\product{\bfmu}{\bfv}_{H^*,H}\\
			=\int_\Om\Bigparen{\nabla u\cdot\nabla v+\bigparen{f(u)+A(u)K(u)-A(u)z}v+(z-K(u))y}\dd x.
		\end{multline}
		In particular, considering $\bfv=(0,y)$ for $y\in\LL^2(\Om)$, we deduce $\xi=z-K(u)$ from \eqref{EQ:determine_frechet_subdifferential}. Furthermore, if we restrict \eqref{EQ:determine_frechet_subdifferential} to functions of type $\bfv=(v,0)$ for $v\in\Hav^1(\Om)$, we obtain
		\[\int_\Om\nabla u\cdot\nabla v\dd x=\int_\Om\bigparen{\mu-f(u)+A(u)(z-K(u))}v\dd x,\]
		i.e.\ $u$ is a weak solution of the nonlinear elliptic problem
		\begin{PDEsystem*}
			\PDElineIn{-\Delta u+h'(u)}{\beta u+\mu+A(u)(z-K(u))+\fraka(u,z)}{\Om},\\
			\PDElineOn{\nabla u\cdot\n}{0}{\boundary{\Om}}.
		\end{PDEsystem*}
		Since $\ell:=\beta u+\mu+A(u)(z-K(u))+\fraka(u,z)\in\LL^2(\Om)$ and $h\in\CC^2(\R)$ is convex, i.e.\ $h'$ monotonically increasing, we can use the Stampacchia method in order to show $u\in\LL^\infty(\Om)$. For this argument, $\ell$ needs to lie in $\LL^r(\Om)$ for some $r>d/2$, hence $r=2$ is valid for $d\leq3$, see, e.g.\ \cite[Theorem 4.5 and Section 7.2.2]{Troeltzsch_OptimaleSteuerungPDE_2009}.

		In particular, since $h'$ is continuous, $h'(u)\in\LL^\infty(\Om)$, and whence $-\Delta u\in\LL^2(\Om)$. The regularity assumptions on $\Om$ therefore imply $u\in\HH^2(\Om)$ (cf. \cite[Chapters II and III]{Grisvard_EllipticNonsmooth_2011}). Finally,
		\[-\Delta u+h'(u)-\beta u-A(u)(z-K(u))=\mu+\fraka(u,z)\in\HH^1(\Om),\]
		so that $\bfu\in\mathcal{A}$ is shown and we have $\bfmu=\delta\calE(\bfu)$.
		\item Let $\bfv=(v,y)\in\calH_m$, and we may assume $\bfv\in\dom(\calE)$, otherwise \eqref{EQ:subdifferential_characterising_estimate} is trivial. Then, from \eqref{EQ:subdifferential_preliminary_estimate} we receive
		\begin{align*}
			\calE(\bfv)&-\calE(\bfu)-\product{\delta\calE(\bfu)}{\bfv-\bfu}_{H^*,H}\\
			&\geq\frac{1}{2}\norm{v-u}_{\Hav^1(\Om)}^2-\frac{\beta}{2}\norm{v-u}_{\LL^2(\Om)}^2-\frac{1}{2}\norm{A'}_\infty\norm{z-K(u)}_{\LL^\infty(\Om)}\norm{v-u}_{\LL^2(\Om)}^2.
		\end{align*}
		Since $u\in\HH^2(\Om)$ and $d\leq3$, the Sobolev-embedding $\HH^2(\Om)\hookrightarrow\LL^\infty(\Om)$ ensures $u\in\LL^\infty(\Om)$. If we then apply \eqref{EQ:L2_interpolation} to $v-u$ (note that $v$ and $u$ have the same mean value $m$) and $\ve$-Young's inequality, we arrive directly at \eqref{EQ:subdifferential_characterising_estimate}, noting that $\norm{v-u}_\Havd\leq\norm{\bfv-\bfu}_H$.\qedhere
	\end{enumerate}
\end{proof}

\begin{proposition}[Monotonicity-type estimate for $\subDG\calE$]\label{PRO:monotonicity_subdifferential}
	Let $\bfu_i=(u_i,z_i)\in\dom(\subDG\calE)$ and $\bfv_i=(v_i,y_i)\in\dom(\calE)$, $i=1,2$, where $z_i\in\LL^\infty(\Om)$ for at least one $i$. Then it holds
	\begin{multline}\label{EQ:subdifferential_monotonicity}
		\bigparen{\bbK(\bfv_1)\delta\calE(\bfu_1)-\bbK(\bfv_2)\delta\calE(\bfu_2),\bfu_1-\bfu_2}_H\\
		\geq-\Lambda(\bfu_i)\norm{\bfu_1-\bfu_2}_H^2-\omega(\bfu_i)\bigparen{\norm{v_1-u_1}_{\LL^2(\Om)}^2+\norm{v_2-u_2}_{\LL^2(\Om)}^2},
	\end{multline}
	where $\Lambda(\bfu):=C_1\bigparen{1+(\norm{A'}_\infty+\norm{\tau'}_\infty)\norm{z-K(u)}_{\LL^\infty(\Om)}}^2$ for some constant $C_1\in(0,\infty)$ depending only on $\beta,A^*,\tau_*$, and $\omega(\bfu):=\norm{\tau'}_\infty^2\norm{z-K(u)}_{\LL^\infty(\Om)}^2$.
\end{proposition}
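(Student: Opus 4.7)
My plan is to compute the left-hand side of~\eqref{EQ:subdifferential_monotonicity} by expanding in the two factors of $H=\Havd\times\LL^2(\Om)$, extracting the coercive quantities $\norm{\nabla(u_1-u_2)}_{\LL^2(\Om;\R^d)}^2$ and $\int_\Om \tau(v_2)^{-1}(\xi_1-\xi_2)^2\dd x$ (writing $\xi_i:=z_i-K(u_i)$), and then absorbing the remaining cross terms via Cauchy--Schwarz, Young's inequality, and the interpolation~\eqref{EQ:L2_interpolation}. Without loss of generality, assume $z_1\in\LL^\infty(\Om)$; together with $u_1\in\HH^2(\Om)\hookrightarrow\LL^\infty(\Om)$ (valid since $d\le 3$), this yields $\xi_1\in\LL^\infty(\Om)$, so the bounds are naturally expressed through $\norm{\xi_1}_{\LL^\infty(\Om)}$. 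By Proposition~\ref{PRO:subdifferential}, write $\delta\calE(\bfu_i)=(\mu_i,\xi_i)$ with $\mu_i=-\Delta u_i+f(u_i)-A(u_i)\xi_i-\fraka_i$.

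\textbf{Main computation.} Since $u_1-u_2$ is mean-zero and $\mu_i\in\HH^1(\Om)$, Green's formula gives $(-\Delta(\mu_1-\mu_2),u_1-u_2)_\Havd=\int_\Om(\mu_1-\mu_2)(u_1-u_2)\dd x$ (the constants $\fraka_i$ drop out). Integration by parts using $\nabla u_i\cdot\n=0$ on $\boundary{\Om}$ then yields
\[
(-\Delta(\mu_1-\mu_2),u_1-u_2)_\Havd=\norm{\nabla(u_1-u_2)}_{\LL^2(\Om;\R^d)}^2+\int_\Om(f(u_1)-f(u_2))(u_1-u_2)\dd x-\int_\Om(A(u_1)\xi_1-A(u_2)\xi_2)(u_1-u_2)\dd x,
\]
and the splitting $f(u)=h'(u)-\beta u$ bounds the $f$-integral below by $-\beta\norm{u_1-u_2}_{\LL^2(\Om)}^2$. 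The $A$-cross term is rewritten as $A(u_2)(\xi_1-\xi_2)+(A(u_1)-A(u_2))\xi_1$, pulling $\xi_1$ out front so $\norm{\xi_1}_{\LL^\infty(\Om)}$ controls it. For the second component I use the decomposition
\[
\frac{\xi_1}{\tau(v_1)}-\frac{\xi_2}{\tau(v_2)}=\frac{\xi_1-\xi_2}{\tau(v_2)}+\xi_1\,\frac{\tau(v_2)-\tau(v_1)}{\tau(v_1)\tau(v_2)},
\]
combined with $z_1-z_2=(\xi_1-\xi_2)+(K(u_1)-K(u_2))$, which isolates the non-negative $\int_\Om \tau(v_2)^{-1}(\xi_1-\xi_2)^2\dd x\ge \tau_*^{-1}\norm{\xi_1-\xi_2}_{\LL^2(\Om)}^2$ as the second coercive quantity.

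\textbf{Absorption.} After these decompositions, every remaining cross term has one of the shapes $C_1\norm{\xi_1-\xi_2}_{\LL^2(\Om)}\norm{u_1-u_2}_{\LL^2(\Om)}$ or $C_2\norm{\xi_1}_{\LL^\infty(\Om)}\norm{u_1-u_2}_{\LL^2(\Om)}^2$, with $C_1,C_2$ of order $(\norm{A'}_\infty+\norm{\tau'}_\infty)\norm{\xi_1}_{\LL^\infty(\Om)}$ (times constants depending only on $A^*,\tau_*$). In addition, the Lipschitz bound $|\tau(v_1)-\tau(v_2)|\le\norm{\tau'}_\infty(|u_1-u_2|+|v_1-u_1|+|v_2-u_2|)$ produces variants in which one $\norm{u_1-u_2}_{\LL^2(\Om)}$-factor is replaced by $\norm{v_i-u_i}_{\LL^2(\Om)}$. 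Young's inequality splits off the $\norm{v_i-u_i}_{\LL^2(\Om)}$-factors to form the $\omega(\bfu_1)$ contribution, while the remaining $\norm{\xi_1-\xi_2}_{\LL^2(\Om)}\norm{u_1-u_2}_{\LL^2(\Om)}$ terms are absorbed into $\tau_*^{-1}\norm{\xi_1-\xi_2}_{\LL^2(\Om)}^2$ plus a residual $C\norm{u_1-u_2}_{\LL^2(\Om)}^2$; the latter, via~\eqref{EQ:L2_interpolation} and Young's, is bounded by $\tfrac12\norm{\nabla(u_1-u_2)}_{\LL^2(\Om;\R^d)}^2+\tfrac{C^2}{2}\norm{u_1-u_2}_\Havd^2\le \tfrac12\norm{\nabla(u_1-u_2)}_{\LL^2(\Om;\R^d)}^2+\tfrac{C^2}{2}\norm{\bfu_1-\bfu_2}_H^2$. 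The surviving prefactor on $\norm{\bfu_1-\bfu_2}_H^2$ comes out of order $(1+(\norm{A'}_\infty+\norm{\tau'}_\infty)\norm{\xi_1}_{\LL^\infty(\Om)})^2$, which is exactly $\Lambda(\bfu_1)$. The principal obstacle is the bookkeeping: one has to choose the Young weights so that all $\norm{\xi_1-\xi_2}_{\LL^2(\Om)}$ and $\norm{\nabla(u_1-u_2)}_{\LL^2(\Om;\R^d)}$ contributions are absorbed with matching signs, and so that the $\norm{v_i-u_i}$-errors separate cleanly into the single prefactor $\omega(\bfu_1)$ rather than contaminating $\Lambda$.
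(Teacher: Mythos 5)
Your overall strategy matches the paper's: both proofs expand the $H$-inner product factor by factor, integrate by parts on the $\Havd$-component to obtain $\int_\Om(\mu_1-\mu_2)(u_1-u_2)\dd x$, split $f=h'-\beta\,\mathrm{id}$ to exploit convexity of $h$, decompose the $\tau$-quotient in the algebraically equivalent form $\xi_1\bigparen{\tau(v_1)^{-1}-\tau(v_2)^{-1}}+\tau(v_2)^{-1}(\xi_1-\xi_2)$, insert the triangle inequality for $\norm{v_1-v_2}_{\LL^2(\Om)}$, and close with Young plus the $\Hav^1$--$\Havd$ interpolation~\eqref{EQ:L2_interpolation}. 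The only real departure lies in which sign-definite quantity you isolate, and this is where a slip occurs.

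You keep $\xi_1-\xi_2$ as your variable and plan to absorb cross terms into $\int_\Om\tau(v_2)^{-1}(\xi_1-\xi_2)^2\dd x$, which you claim is $\geq\tau_*^{-1}\norm{\xi_1-\xi_2}_{\LL^2(\Om)}^2$. But $\tau_*\leq\tau(v_2)\leq\tau^*$, so the correct lower bound is $(\tau^*)^{-1}\norm{\xi_1-\xi_2}_{\LL^2(\Om)}^2$, and once that is fixed, any Young step that uses this quantity for absorption produces residuals whose prefactors contain $\tau^*$. The resulting constant would then depend on $\tau^*$, contradicting the proposition's assertion that $C_1=C_1(\beta,A^*,\tau_*)$. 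The paper sidesteps this entirely by substituting $\xi_1-\xi_2=(z_1-z_2)-(K(u_1)-K(u_2))$ at the start, so that every cross term is phrased in $\norm{z_1-z_2}_{\LL^2(\Om)}$ and $\norm{u_1-u_2}_{\LL^2(\Om)}$; the sign-definite piece becomes $\int_\Om\tau(v_2)^{-1}(z_1-z_2)^2\dd x\geq 0$ and is simply discarded, never used for absorption, while the $\norm{u_1-u_2}_{\LL^2(\Om)}^2$ residuals are absorbed into the coercive $\norm{\nabla(u_1-u_2)}_{\LL^2(\Om;\R^d)}^2$ via interpolation and Young, with weights chosen so the $\tau_*$-factors land on $\norm{z_1-z_2}_{\LL^2(\Om)}^2$ rather than on the $\omega$-term. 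You should either switch to the paper's bookkeeping or, before your final Young step, rewrite $\xi_1-\xi_2$ in terms of $z_1-z_2$ and $K$-Lipschitzness so that the Legendre--Young weights can be chosen $\tau^*$-free.
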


\begin{proof}
	After possibly renumbering indices, we may assume $i=1$. Inserting the definitions of $\bbK(\bfv_i)$, $\delta\calE(\bfu_i)$, and rearranging terms, we obtain
	\begin{align}\label{EQ:subdifferential_monotonicity_computation}
		\bigparen{\bbK&(\bfv_1)\delta\calE(\bfu_1)-\bbK(\bfv_2)\delta\calE(\bfu_2),\bfu_1-\bfu_2}_H\notag\\
		&=\int_\Om\nabla\Bigparen{-\Delta(u_1-u_2)+f(u_1)-f(u_2)-A(u_1)(z_1-K(u_1))\notag\\
			&\qquad\qquad+A(u_2)(z_2-K(u_2))}\cdot\nabla\bigparen{(-\Delta)^{-1}(u_1-u_2)}\dd x\notag\\
		&\quad+\int_\Om\Bigparen{\frac{z_1-K(u_1)}{\tau(v_1)}-\frac{z_2-K(u_2)}{\tau(v_2)}}(z_1-z_2)\dd x\notag\\
		&=\int_\Om\abs{\nabla(u_1-u_2)}^2\dd x+\int_\Om\underbrace{(h'(u_1)-h'(u_2))(u_1-u_2)}_{\geq0\text{ by convexity of }h}\dd x\notag\\
		&\quad-\beta\int_\Om(u_1-u_2)^2\dd x-\int_\Om A(u_2)(z_1-z_2)(u_1-u_2)\dd x\notag\\
		&\quad-\int_\Om(z_1-K(u_1))(A(u_1)-A(u_2))(u_1-u_2)\dd x\notag\\
		&\quad+\int_\Om\underbrace{A(u_2)}_{\geq A_*>0}\underbrace{(K(u_1)-K(u_2))(u_1-u_2)}_{\geq A_*(u_1-u_2)^2\geq0}\dd x\notag\\
		&\quad+\int_\Om(z_1-K(u_1))\Bigparen{\frac{1}{\tau(v_1)}-\frac{1}{\tau(v_2)}}(z_1-z_2)\dd x\notag\\
		&\quad-\int_\Om\frac{1}{\tau(v_2)}(K(u_1)-K(u_2))(z_1-z_2)\dd x+\int_\Om\frac{1}{\tau(v_2)}(z_1-z_2)^2\dd x\notag\\
		&\geq\norm{u_1-u_2}_{\Hav^1(\Om)}^2-A^*\norm{z_1-z_2}_{\LL^2(\Om)}\norm{u_1-u_2}_{\LL^2(\Om)}\notag\\
		&\quad-\bigparen{\beta+\norm{A'}_\infty\norm{z_1-K(u_1)}_{\LL^\infty(\Om)}}\norm{u_1-u_2}_{\LL^2(\Om)}^2\notag\\
		&\quad-\frac{\norm{\tau'}_\infty}{\tau_*^2}\norm{z_1-K(u_1)}_{\LL^\infty(\Om)}\norm{v_1-v_2}_{\LL^2(\Om)}\norm{z_1-z_2}_{\LL^2(\Om)}\notag\\
		&\quad-\frac{A^*}{\tau_*}\norm{u_1-u_2}_{\LL^2(\Om)}\norm{z_1-z_2}_{\LL^2(\Om)},
	\end{align}
	where we used that $A$, $K$ and $\frac{1}{\tau}$ are Lipschitz continuous with respective Lipschitz constants $\norm{A'}_\infty$, $A^*$ and $\frac{\norm{\tau'}_\infty}{\tau_*^2}$. Applying Young's inequality several times and triangular inequality $\norm{v_1-v_2}_{\LL^2(\Om)}\leq\norm{v_1-u_1}_{\LL^2(\Om)}+\norm{u_1-u_2}_{\LL^2(\Om)}+\norm{v_2-u_2}_{\LL^2(\Om)}$ to \eqref{EQ:subdifferential_monotonicity_computation}, one obtains a constant $C=C(\beta,A^*,\tau_*)\in[1,\infty)$ satisfying
	\begin{multline*}
		\bigparen{\bbK(\bfv_1)\delta\calE(\bfu_1)-\bbK(\bfv_2)\delta\calE(\bfu_2),\bfu_1-\bfu_2}_H\\
		\geq\norm{u_1-u_2}_{\Hav^1(\Om)}^2-C\bigparen{1+\norm{A'}_\infty\norm{z_1-K(u_1)}_{\LL^\infty(\Om)}}\norm{u_1-u_2}_{\LL^2(\Om)}^2\\
        -C(1+\norm{\tau'}_\infty\norm{z_1-K(u_1)}_{\LL^\infty(\Om)})^2\norm{z_1-z_2}_{\LL^2(\Om)}^2\\
		-\norm{\tau'}_\infty^2\norm{z_1-K(u_1)}_{\LL^\infty(\Om)}^2\bigparen{\norm{v_2-u_2}_{\LL^2(\Om)}^2+\norm{v_1-u_1}_{\LL^2(\Om)}^2}.
	\end{multline*}
	Eventually, applying \eqref{EQ:L2_interpolation} and Young's inequality once more, we arrive at the desired estimate \eqref{EQ:subdifferential_monotonicity}.
\end{proof}

\begin{lemma}\label{LEM:W_estimate}
	There exists a constant $C_2=C_2(\Om,d,A^*,\beta)\in(0,\infty)$ such that
	\begin{equation}\label{EQ:W_estimate}
		\norm{u-m}_{\HH^2(\Om)}\leq C_2\bigparen{\norm{(u-m,z)}_H+\norm{\delta\calE(\bfu)}_{H^*}}
	\end{equation}
	for all $\bfu=(u,z)\in\dom(\subDG\calE)$.
\end{lemma}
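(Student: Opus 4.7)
The strategy is to reduce the $H^2$-bound on $u-m$ to an $L^2$-bound on $\Delta u$ via Lemma~\ref{LEM:H2_Delta_estimate}, and then to extract that bound from the elliptic identity that $u$ satisfies by virtue of belonging to $\dom(\subDG\calE)=\mathcal{A}$. Concretely, write $\delta\calE(\bfu)=(\mu,\xi)$ with $\xi=z-K(u)\in\LL^2(\Om)$ and $\mu\in\Hav^1(\Om)$ (cf.\ Proposition~\ref{PRO:subdifferential}), so that $u\in\HH^2(\Om)$ with $\nabla u\cdot\n=0$ on $\boundary{\Om}$ and
\[
-\Delta u=\mu-f(u)+A(u)\xi+\fraka(u,z)\qquad\text{in }\LL^2(\Om).
\]

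The key step is to test this identity with $-\Delta u\in\LL^2(\Om)$. The contribution of the constant $\fraka$ vanishes because $\int_\Om\Delta u\dd x=0$ by the Neumann condition. Integration by parts — again using Neumann — turns the $\mu$-term into $\int_\Om\nabla\mu\cdot\nabla u\dd x$, bounded by $\norm{\delta\calE(\bfu)}_{H^*}\norm{\nabla u}_{\LL^2(\Om)}$. For the $f(u)$-term, the Sobolev embedding $\HH^2(\Om)\hookrightarrow\LL^\infty(\Om)$ in $d\leq 3$ gives $f(u)\in\HH^1(\Om)$, so integration by parts yields $\int_\Om f(u)\Delta u\dd x=-\int_\Om f'(u)|\nabla u|^2\dd x$, and the lower bound $f'\geq-\beta$ controls this by $\beta\norm{\nabla u}_{\LL^2(\Om)}^2$. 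Collecting everything via Cauchy–Schwarz, we obtain
\[
\norm{\Delta u}_{\LL^2(\Om)}^2\leq\beta\norm{\nabla u}_{\LL^2(\Om)}^2+\norm{\delta\calE(\bfu)}_{H^*}\norm{\nabla u}_{\LL^2(\Om)}+A^*\norm{\delta\calE(\bfu)}_{H^*}\norm{\Delta u}_{\LL^2(\Om)}.
\]

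The remaining obstacle is the intermediate norm $\norm{\nabla u}_{\LL^2(\Om)}=\norm{u-m}_{\Hav^1(\Om)}$, which a priori is not controlled by the quantities on the right-hand side of~\eqref{EQ:W_estimate}. The plan here is to invoke the interpolation~\eqref{EQ:H1_interpolation} together with~\eqref{EQ:H2_Delta_estimate} to obtain
\[
\norm{\nabla u}_{\LL^2(\Om)}\leq C\norm{u-m}_{\HH^2(\Om)}^{2/3}\norm{u-m}_{\Havd}^{1/3}\leq C\norm{\Delta u}_{\LL^2(\Om)}^{2/3}\norm{(u-m,z)}_H^{1/3},
\]
exploiting $\norm{u-m}_{\Havd}\leq\norm{(u-m,z)}_H$. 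Substituting this into the previous estimate and applying Young's inequality (with exponent $3/2$ for the $\beta$-term and suitable conjugate exponents for the remaining terms) absorbs all occurrences of $\norm{\Delta u}_{\LL^2(\Om)}$ into the left-hand side, producing $\norm{\Delta u}_{\LL^2(\Om)}\leq C_2(\norm{(u-m,z)}_H+\norm{\delta\calE(\bfu)}_{H^*})$ with $C_2$ depending only on $\Om,d,A^*,\beta$. Lemma~\ref{LEM:H2_Delta_estimate} then finishes the proof.

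The delicate point of this argument is that $C_2$ must not depend on $\norm{u}_{\LL^\infty(\Om)}$ or on the growth constants for $f$; this is precisely why one cannot simply estimate $\norm{f(u)}_{\LL^2(\Om)}$ directly, and why the semi-convexity $f'\geq-\beta$ of the potential — rather than its pointwise size — enters the estimate through integration by parts.
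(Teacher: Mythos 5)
Your proof is correct and follows essentially the same route as the paper's: reduce to an $\LL^2$-bound on $\Delta u$ via Lemma~\ref{LEM:H2_Delta_estimate}, test the elliptic identity with $-\Delta u$, integrate by parts to exploit $f'\geq-\beta$ and the $\HH^1$-regularity of the chemical potential, and close the estimate with the interpolation~\eqref{EQ:H1_interpolation} plus Young's inequality. The only cosmetic difference is that you write the equation explicitly as $-\Delta u=\mu-f(u)+A(u)\xi+\fraka$ and observe that the $\fraka$-term drops out by $\int_\Om\Delta u\dd x=0$, whereas the paper tests with $-\Delta u+f(u)-A(u)(z-K(u))$ (i.e.\ $\mu+\fraka$) and lets the constant disappear under the gradient; these are the same computation.
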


\begin{proof}
	From the definition of $\delta\calE(\bfu)$, i.e.\ \eqref{EQ:subdifferential_delta_definition}, it holds
	\begin{equation}\label{EQ:W_estimate_subgradient_norm}
		\norm{\delta\calE(\bfu)}_{H^*}^2=\int_\Om\Bigparen{\abs{\nabla(-\Delta u+f(u)-A(u)(z-K(u)))}^2+(z-K(u))^2}\dd x.
	\end{equation}
	Thanks to \eqref{EQ:H2_Delta_estimate}, it suffices to show \eqref{EQ:W_estimate} with left-hand side $\norm{\Delta u}_{\LL^2(\Om)}$. Multiplying $-\Delta u+f(u)-A(u)(z-K(u))$ with $-\Delta u$ and integrating by parts twice, we get, by making use of $\nabla u\cdot\n=0$ on $\boundary{\Om}$ and $\nabla(f(u))=f'(u)\nabla u$ (as $f\in\CC^1(\R)$ and $u\in\HH^2(\Om)\hookrightarrow\LL^\infty(\Om)$),
	\begin{align*}
		\int_\Om\abs{\Delta u}^2\dd x&=\int_\Om\nabla(-\Delta u+f(u)-A(u)(z-K(u)))\cdot\nabla u\dd x\\
		&\quad+\int_\Om f(u)\cdot\Delta u\dd x-\int_\Om A(u)(z-K(u))\cdot\Delta u\dd x\\
		&\leq\norm{\nabla(-\Delta u+f(u)-A(u)(z-K(u)))}_{\LL^2(\Om;\R^d)}\norm{u-m}_{\Hav^1(\Om)}\\
		&\quad-\int_\Om f'(u)\abs{\nabla u}^2\dd x+A^*\norm{\Delta u}_{\LL^2(\Om)}\norm{(z-K(u))}_{\LL^2(\Om)}.
	\end{align*}
	Hence, as $f'\geq-\beta$, Young's inequality provides
	\begin{align*}
		\norm{\Delta u}_{\LL^2(\Om)}^2&\leq\norm{\nabla(-\Delta u+f(u)-A(z-K(u)))}_{\LL^2(\Om;\R^d)}^2\\
		&\quad+(1+2\beta)\norm{u-m}_{\Hav^1(\Om)}^2+(A^*)^2\norm{z-K(u)}_{\LL^2(\Om)}^2\\
		&\leq\max\{1,(A^*)^2\}\norm{\bfmu}_{H^*}^2+(1+2\beta)C\norm{\Delta u}_{\LL^2(\Om)}^{4/3}\norm{u-m}_\Havd^{2/3},
	\end{align*}
	where the last estimate follows from \eqref{EQ:H1_interpolation}, \eqref{EQ:H2_Delta_estimate} and \eqref{EQ:W_estimate_subgradient_norm}. Since $\norm{u-m}_\Havd\leq\norm{(u-m,z)}_H$, the desired estimate \eqref{EQ:W_estimate} finally follows after one more application of Young's inequality.
\end{proof}

\begin{proposition}[Closedness of $\subDG\calE$]\label{PRO:closedness_subdifferential_energy}
	The subdifferential $\subDG\calE$ is strong--weak closed, i.e.\ if $(\bfu_n)_{n\in\N}\subset\dom(\subDG\calE)$, $\bfu\in\calH_m$ and $\bfmu\in H^*$ are such that $\bfu_n\to\bfu$ in $\calH_m$ and $\delta\calE(\bfu_n)\weakto\bfmu$ in $H^*$ as $n\to\infty$, then $\bfmu=\delta\calE(\bfu)$.
\end{proposition}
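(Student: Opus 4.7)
The plan is to combine the a priori estimate of Lemma~\ref{LEM:W_estimate} with the compact embedding $\HH^2(\Om)\hookrightarrow\CC(\varcl{\Om})$ (valid for $d\le 3$) in order to pass to the limit inside the nonlinear terms defining $\delta\calE(\bfu_n)$, and then identify the weak limit $\bfmu$ via uniqueness of weak limits.

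First, since $\delta\calE(\bfu_n)\weakto\bfmu$ in $H^*$, the sequence $(\norm{\delta\calE(\bfu_n)}_{H^*})_n$ is bounded, and by the hypothesis also $(\norm{(u_n-m,z_n)}_H)_n$ is bounded. Lemma~\ref{LEM:W_estimate} then yields a uniform bound on $(u_n-m)$ in $\HH^2(\Om)$. Combined with the strong convergence $u_n\to u$ in $\Havd+m$ coming from $\bfu_n\to\bfu$ in $\calH_m$, uniqueness of weak limits ensures $u_n\weakto u$ in $\HH^2(\Om)$ along the full sequence (every subsequence admits a further subsequence converging weakly in $\HH^2(\Om)$, and any weak limit must agree with $u$ by continuity of the embedding $\HH^2(\Om)\hookrightarrow\Havd$). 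In particular, $u\in\HH^2(\Om)+m$, and since the trace operator $\HH^2(\Om)\ni v\mapsto\nabla v\cdot\n\in\HH^{1/2}(\boundary\Om)$ is continuous, the boundary condition $\nabla u\cdot\n=0$ passes to the limit.

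Next, I invoke the compact embedding $\HH^2(\Om)\hookrightarrow\CC(\varcl{\Om})$ to obtain $u_n\to u$ uniformly on $\varcl{\Om}$, so that $f(u_n)\to f(u)$, $A(u_n)\to A(u)$, and $K(u_n)\to K(u)$ uniformly. Together with the strong convergence $z_n\to z$ in $\LL^2(\Om)$, this yields
\begin{align*}
    -\Delta u_n &\weakto -\Delta u &&\text{in }\LL^2(\Om),\\
    f(u_n)-A(u_n)(z_n-K(u_n)) &\to f(u)-A(u)(z-K(u)) &&\text{in }\LL^2(\Om),\\
    \fraka(u_n,z_n) &\to \fraka(u,z), & z_n-K(u_n) &\to z-K(u)\text{ in }\LL^2(\Om).
\end{align*}
Hence, the first component of $\delta\calE(\bfu_n)$ converges weakly in $\LL^2(\Om)$ to $-\Delta u+f(u)-A(u)(z-K(u))-\fraka(u,z)$, while the second converges strongly in $\LL^2(\Om)$ to $z-K(u)$.

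Since by assumption $\delta\calE(\bfu_n)\weakto\bfmu=(\mu,\xi)$ in $H^*=\Hav^1(\Om)\times\LL^2(\Om)$, the embedding $\Hav^1(\Om)\hookrightarrow\LL^2(\Om)$ and uniqueness of weak $\LL^2$-limits force
\[\mu=-\Delta u+f(u)-A(u)(z-K(u))-\fraka(u,z),\qquad\xi=z-K(u).\]
Because $\mu\in\Hav^1(\Om)$, we deduce $-\Delta u+f(u)-A(u)(z-K(u))\in\HH^1(\Om)$, which together with $u\in\HH^2(\Om)+m$ and the boundary condition shows $\bfu\in\mathcal{A}=\dom(\subDG\calE)$. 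By Proposition~\ref{PRO:subdifferential}(i), $\subDG\calE(\bfu)=\{\delta\calE(\bfu)\}$, and the identification above gives $\bfmu=\delta\calE(\bfu)$, as required.

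There is no serious obstacle; the only point requiring care is that Lemma~\ref{LEM:W_estimate} is stated for elements of $\dom(\subDG\calE)$, so the $\HH^2$-bound is available precisely along the approximating sequence, and the dimensional restriction $d\le 3$ is used implicitly to ensure the compactness $\HH^2(\Om)\hookrightarrow\CC(\varcl{\Om})$ and thus the strong convergence of the nonlinearities $f(u_n),A(u_n),K(u_n)$.
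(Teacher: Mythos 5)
Your proof is correct and follows essentially the same route as the paper: invoke Lemma~\ref{LEM:W_estimate} to obtain a uniform $\HH^2(\Om)$-bound, upgrade to strong convergence of $u_n$ by compactness, pass to the limit in the nonlinear terms, and conclude by uniqueness of weak $\LL^2$-limits. The only cosmetic difference is that you use the compact embedding $\HH^2(\Om)\hookrightarrow\CC(\varcl{\Om})$ to get uniform convergence of $u_n$ and hence of $f(u_n)$, whereas the paper works with $u_n\to u$ in $\LL^{2p}(\Om)$, a.e.\ convergence, and the $p$-growth of $f$; both are valid for $d\le3$, and your variant is, if anything, slightly more explicit about verifying $\bfu\in\mathcal{A}$ (boundary condition, $\HH^1$-regularity of $\mu$).
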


\begin{proof}
	In the following, we write $(u_n,z_n)=\bfu_n$, $(u,z)=\bfu$. From $\bfu_n\to\bfu$ in $\calH_m$ we immediately obtain $z_n\to z$ in $\LL^2(\Om)$. Moreover, since $(\delta\calE(\bfu_n))_{n\in\N}$ and $(\bfu_n)_{n\in\N}$ are bounded in $H^*$ resp.\ $\calH_m$, estimate \eqref{EQ:W_estimate} provides a uniform $\HH^2(\Om)$-bound for $(u_n)_{n\in\N}$, so that in fact $u\in\HH^2(\Om)$ and, for a non-relabelled subsequence, $u_n\weakto u$ in $\HH^2(\Om)$, $u_n\to u$ in $\LL^{2p}(\Om)$ and almost everywhere in $\Om$. In particular, using $A_*\leq A\leq A^*$ and continuity of $A$, we infer $A(u_n)(z_n-K(u_n))\to A(u)(z-K(u))$ and $K(u_n)\to K(u)$ in $\LL^2(\Om)$. Furthermore, the $p$-growth \eqref{EQ:growth_condition_f} of $f$ ensures that $f(u_n)\to f(u)$ in $\LL^2(\Om)$. Collecting the derived convergences, we thus conclude that $\delta\calE(\bfu_n)\weakto\delta\calE(\bfu)$ in $\LL^2(\Om;\R^2)$. On the other hand, $\delta\calE(\bfu_n)\weakto\bfmu$ follows in $\LL^2(\Om;\R^2)$ by hypothesis. As the weak limit is unique, we conclude $\bfmu=\delta\calE(\bfu)$.
\end{proof}

If we apply Lemma~\ref{LEM:abstract_subdifferential_closedness} to $S_n:=\subDG\calE$ and combine it with Proposition~\ref{PRO:closedness_subdifferential_energy}, we readily deduce the following evolutionary closedness:

\begin{corollary}[Evolutionary closedness of $\subDG\calE$]\label{COR:evolutionary_closedness_subdifferential_energy}
	Let $T<\infty$. The subdifferential $\subDG\calE$ is evolutionary strong--weak closed, i.e.\ if $\bfu,\bfu_n\in\LL^2(0,T;\calH_m),\bfmu,\bfmu_n\in\LL^2(0,T;H^*)$ are such that
	\begin{enumerate}[(1)]
		\item $\bfu_n\to\bfu$ in $\LL^2(0,T;\calH_m)$ as $n\to\infty$,
		\item $\bfmu_n\weakto\bfmu$ in $\LL^2(0,T;H^*)$ as $n\to\infty$ and
		\item $\bfmu_n(t)\in\subDG\calE(\bfu_n(t))$ for almost all $t\in(0,T)$, all $n\in\N$,
	\end{enumerate}
	then it follows $\bfmu(t)\in\subDG\calE(\bfu(t))$ for almost every $t\in(0,T)$.
\end{corollary}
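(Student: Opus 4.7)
My plan is to apply the abstract closedness result Lemma~\ref{LEM:abstract_subdifferential_closedness} with the constant sequence of multi-maps $S_n\equiv\subDG\calE$, verifying its pointwise strong--weak closedness hypothesis through Proposition~\ref{PRO:closedness_subdifferential_energy} together with the single-valuedness stated in Proposition~\ref{PRO:subdifferential}. At the pointwise level one first extracts a (non-relabelled) subsequence along which the strong convergence $\bfu_n\to\bfu$ in $\LL^2(0,T;\calH_m)$ upgrades to $\bfu_n(t)\to\bfu(t)$ in $\calH_m$ for almost every $t\in(0,T)$. Since $\subDG\calE(\bfv)=\{\delta\calE(\bfv)\}$, the assumption $\bfmu_n(t)\in\subDG\calE(\bfu_n(t))$ reads $\bfmu_n(t)=\delta\calE(\bfu_n(t))$ a.e., and Proposition~\ref{PRO:closedness_subdifferential_energy} is exactly the statement that the graph of $\delta\calE$ is closed with respect to strong convergence in $\calH_m$ paired with weak convergence in $H^*$. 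These two inputs are precisely what the abstract lemma requires of the family $S_n\equiv\subDG\calE$.

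The main obstacle, handled inside Lemma~\ref{LEM:abstract_subdifferential_closedness}, is that weak $\LL^2(0,T;H^*)$-convergence of $\bfmu_n$ does not by itself yield pointwise weak convergence of $\bfmu_n(t)$ in $H^*$ at almost every $t$; a priori one only has an $\LL^2$-bound on $\|\bfmu_n(\cdot)\|_{H^*}$, which is not enough to extract a single subsequence with pointwise weak limits. The standard remedy, and the one I would rely on, is a Mazur-type argument: finite convex combinations $\widetilde\bfmu_n:=\sum_{k\geq n}\alpha_{n,k}\bfmu_k$ with $\alpha_{n,k}\geq 0$ and $\sum_k\alpha_{n,k}=1$ converge strongly to $\bfmu$ in $\LL^2(0,T;H^*)$ and hence, along a further subsequence, pointwise in $H^*$ for almost every $t\in(0,T)$. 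Combining this with the convex-valuedness of $\subDG\calE(\bfv)$ (in our case even a singleton) and the pointwise closedness supplied by Proposition~\ref{PRO:closedness_subdifferential_energy} then identifies the limit $\bfmu(t)$ with $\delta\calE(\bfu(t))\in\subDG\calE(\bfu(t))$ on a full-measure set of times, which is the conclusion of the corollary.
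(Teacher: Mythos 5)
Your reduction of the corollary is exactly the paper's: apply Lemma~\ref{LEM:abstract_subdifferential_closedness} to the constant family $S_n\equiv\subDG\calE$ and feed in the static strong--weak closedness from Proposition~\ref{PRO:closedness_subdifferential_energy}, with $\subDG\calE(\bfu)=\{\delta\calE(\bfu)\}$ trivially closed and convex by Proposition~\ref{PRO:subdifferential}. That part is correct and complete.

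However, the Mazur-type argument you sketch as ``the one you would rely on'' for the step handled inside Lemma~\ref{LEM:abstract_subdifferential_closedness} has a genuine gap and is \emph{not} what the lemma does. Mazur does give convex combinations $\widetilde{\bfmu}_n=\sum_{k\ge n}\alpha_{n,k}\bfmu_k\to\bfmu$ strongly in $\LL^2(0,T;H^*)$, hence pointwise a.e.\ in $H^*$ along a subsequence. But $\widetilde{\bfmu}_n(t)$ is a convex combination of the values $\delta\calE(\bfu_k(t))$ for \emph{varying} $k\ge n$, so the single-valuedness and convexity of $\subDG\calE(\bfu(t))$ by themselves say nothing about this combination. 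To identify $\bfmu(t)=\delta\calE(\bfu(t))$ via Proposition~\ref{PRO:closedness_subdifferential_energy} you would first need, for a.e.\ fixed $t$, that $\bfmu_k(t)\weakto\delta\calE(\bfu(t))$ in $H^*$, and this in turn would require $(\bfmu_k(t))_k$ to be bounded in $H^*$ for a.e.\ $t$. An $\LL^2(0,T;H^*)$ bound does \emph{not} yield such pointwise boundedness; Fatou only gives, for a.e.\ $t$, a $t$-\emph{dependent} bounded subsequence, which cannot be fed into a Mazur argument that already fixed its coefficients $\alpha_{n,k}$ at the $\LL^2$ level. This is precisely the obstruction the paper's Lemma~\ref{LEM:abstract_subdifferential_closedness} circumvents with the Young measure machinery of Rossi--Savar\'e: after one subsequence, a time-parametrised Young measure $\nu_t$ concentrates on the set $L(t)$ of weak cluster points of $(\bfmu_{n_k}(t))_k$, the barycentre identity $\bfmu(t)=\int_{H^*}z\,\dd\nu_t(z)$ holds a.e., and since the static closedness forces $L(t)\subseteq\subDG\calE(\bfu(t))$, a closed convex (here singleton) set, one concludes $\bfmu(t)\in\subDG\calE(\bfu(t))$. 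So your statement of the corollary's proof is fine insofar as it cites the lemma, but the internal justification you propose would have to be replaced by the Young measure argument (or some other tool that handles the absence of pointwise bounds).
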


\begin{proposition}[Closedness of $\D_{\bfv}\calR$]\label{PRO:closedness_subdifferential_diss_potential}
	Let $T<\infty$. The differential of $\calR$ is closed in the following sense: If $\bfu,\bfu_n\in\LL^2(0,T;\calH_m)$, $\bfv,\bfv_n\in\LL^2(0,T;H)$ are such that
	\begin{enumerate}[(1)]
		\item $\bfu_n\to\bfu$ in $\LL^2(0,T;\calH_m)$ as $n\to\infty$,
		\item $\bfv_n\weakto\bfv$ in $\LL^2(0,T;H)$ as $n\to\infty$ and
		\item $\sup_{n\in\N}\esssup_{t\in[0,T]}\calE(\bfu_n(t))<\infty$,
	\end{enumerate}
	then $\bbG(\bfu_n)\bfv_n\weakto\bbG(\bfu)\bfv$ in $\LL^2(0,T;H^*)$ for $n\to\infty$.
\end{proposition}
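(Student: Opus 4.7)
The approach is to decompose $\bbG(\bfu_n)\bfv_n = ((-\Delta)^{-1} v_n, \tau(u_n) y_n)$ and establish the weak convergence to $\bbG(\bfu)\bfv$ component by component in $\LL^2(0,T;H^*) = \LL^2(0,T;\Hav^1(\Om) \times \LL^2(\Om))$.

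For the first component, projecting hypothesis~(2) onto the $\Havd$-factor yields $v_n \weakto v$ in $\LL^2(0,T;\Havd)$. Since $(-\Delta)^{-1}\colon \Havd \to \Hav^1(\Om)$ is a bounded linear isomorphism (the inverse of the Riesz isomorphism in the inner products~\eqref{EQ:averaged_spaces_inner_products}), continuity immediately gives $(-\Delta)^{-1} v_n \weakto (-\Delta)^{-1} v$ in $\LL^2(0,T;\Hav^1(\Om))$.

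The second component is more delicate. I would first upgrade the convergence $u_n \to u$ in $\LL^2(0,T;\Havd)$ (which follows from (1)) to strong convergence in $\LL^2(0,T;\LL^2(\Om))$. Hypothesis~(3), combined with the decomposition $F = h - \frac{\beta}{2}u^2$, the mean-value constraint $\avint_\Om u_n \dd x = m$, and Poincaré--Wirtinger, provides a uniform bound $\sup_n \|u_n\|_{\LL^\infty(0,T;\HH^1(\Om))} < \infty$. Applying the interpolation inequality~\eqref{EQ:L2_interpolation} pointwise in time and Cauchy--Schwarz in time then yields
\[
\|u_n - u\|_{\LL^2(0,T;\LL^2(\Om))}^2 \leq \|u_n - u\|_{\LL^2(0,T;\Hav^1(\Om))} \, \|u_n - u\|_{\LL^2(0,T;\Havd)} \longrightarrow 0,
\]
since the first factor is bounded and the second tends to zero by (1). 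Passing to a subsequence (not relabelled) with $u_n \to u$ almost everywhere on $(0,T)\times\Om$, the continuity and boundedness of $\tau$ together with Lebesgue's dominated convergence give $\tau(u_n)\psi \to \tau(u)\psi$ in $\LL^2((0,T)\times\Om)$ for every fixed $\psi \in \LL^2((0,T)\times\Om)$ (dominating function $\tau^*|\psi|$). Combined with $y_n \weakto y$ in $\LL^2((0,T)\times\Om)$ from (2), the weak--strong pairing gives
\[
\int_0^T \int_\Om \tau(u_n) y_n \psi \dd x \dd t = \int_0^T \int_\Om y_n \, (\tau(u_n) \psi) \dd x \dd t \longrightarrow \int_0^T \int_\Om y \, \tau(u) \psi \dd x \dd t.
\]
Since $(\tau(u_n)y_n)_n$ is bounded in $\LL^2((0,T)\times\Om)$ (by $\tau^* \sup_n \|y_n\|_{\LL^2}$) and the subsequential weak limit coincides for every extracted subsequence, a standard subsequence argument (weak sequential compactness in the bounded-metrizable weak topology) upgrades this to weak convergence of the full sequence.

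The main obstacle is the nonlinear dependence of $\bbG$ on $\bfu$ through $\tau(u)$: weak convergence of $\bfv_n$ alone cannot be coupled with the a priori only-weak convergence of $u_n$ (that would follow from (1) by projection) without some form of strong compactness. The decisive technical step is therefore the compactness upgrade from $\LL^2(0,T;\Havd)$ to $\LL^2(0,T;\LL^2(\Om))$, which relies on the uniform $\HH^1$-bound provided by~(3) and on the interpolation estimate of Lemma~\ref{LEM:interpolations}; once available, the rest of the argument follows the standard weak--strong pairing pattern.
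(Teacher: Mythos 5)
Your proof is correct and follows essentially the same route as the paper's: the same component-wise decomposition, the same interpolation-based upgrade of $u_n$ to strong convergence in $\LL^2(0,T;\LL^2(\Om))$ using the $\Havd$-convergence from (1) together with the $\HH^1$-bound implied by (3), and the same weak--strong pairing plus subsequence argument for $\tau(u_n)y_n$. The only cosmetic difference is that you spell out the coercivity step yielding the uniform $\HH^1$-bound, which the paper's proof asserts without elaboration.
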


\begin{proof}
	Write $(v_n,y_n)=\bfv_n$, $(u_n,z_n)=\bfu_n$, $(v,y)=\bfv$ and $(u,z)=\bfu$. By definition of $\bbG$, we need to prove $\bfu(t)\in\dom(\calE)$ for almost every $t\in(0,T)$ and
	\[((-\Delta)^{-1}v_n,\tau(u_n)y_n)\weakto((-\Delta)^{-1}v,\tau(u)y)\]
	in $\LL^2(0,T;\Hav^1(\Om)\times\LL^2(\Om))$. From (2) we particularly get $v_n\weakto v$ in $\LL^2(0,T;\Havd)$, thus $(-\Delta)^{-1}v_n\weakto(-\Delta)^{-1}v$ in $\LL^2(0,T;\Hav^1(\Om))$. Moreover, \eqref{EQ:L2_interpolation} yields
	\[\norm{u_n-u}_{\LL^2(0,T;\LL^2(\Om))}^2\leq\underbrace{\norm{u_n-u}_{\LL^2(0,T;\Havd)}}_{\to0\text{ by (1)}}\underbrace{\norm{u_n-u}_{\LL^2(0,T;\Hav^1(\Om))}}_{\text{uniformly bounded by (3)}},\]
	so we infer $u_n\to u$ in $\LL^2(0,T;\LL^2(\Om))$ and -- up to a subsequence (not relabelled) -- almost everywhere in $(0,T)\times\Om$. Therefore, together with $y_n\weakto y$ from (2) and $\abs{\tau(u)}\leq\tau^*$, we infer $\tau(u_n)y_n\weakto\tau(u)y$ in $\LL^2(0,T;\LL^2(\Om))$. This holds true for any subsequence we might have chosen, hence we have convergence along the whole sequence.
\end{proof}

\begin{proposition}[Chain rule]\label{PRO:chain_rule}
	Let $T<\infty$. Assume $\bfu\in\ACloc^2([0,T];\calH_m)\cap\LL^2(0,T;\HH^2(\Om)\times\LL^\infty(\Om))$ and $\bfmu\in\LL^2(0,T;H^*)$ are such that $\bfmu(t)\in\subDG\calE(\bfu(t))$ for almost every $t\in(0,T)$. Then, the function $\calE\circ\bfu:[0,T]\to[0,\infty)$ is absolutely continuous, and for almost all $t\in(0,T)$
	\[\ddd{t}\calE(\bfu(t))=\product{\bfmu(t)}{\dot\bfu(t)}_{H^*,H}.\]
\end{proposition}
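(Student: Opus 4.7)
The plan is to follow the blueprint of~\cite[Section~4]{Ambrosio_Gigli_Savare_GradientFlows_2005} for $\lambda$-convex functionals, suitably adapted to the present ``slightly non-semiconvex'' setting via the subgradient estimate~\eqref{EQ:subdifferential_characterising_estimate} from Proposition~\ref{PRO:subdifferential}(ii). The point is that the extra spatial regularity postulated on $\bfu$ guarantees time-integrability of the state-dependent semiconvexity defect $\lambda(\bfu(\cdot))$, which is exactly what is needed to absorb the non-convex quadratic errors in a limiting Riemann-sum argument.

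As a preliminary step, I would check that $r\mapsto|\lambda(\bfu(r))|$ belongs to $\LL^1(0,T)$: the Sobolev embedding $\HH^2(\Om)\hookrightarrow\LL^\infty(\Om)$ (valid for $d\le 3$), the linear growth of $K$, and the hypothesis $\bfu\in\LL^2(0,T;\HH^2(\Om)\times\LL^\infty(\Om))$ give $\norm{z(r)-K(u(r))}_{\LL^\infty(\Om)}\in\LL^2(0,T)$. Since $\bfmu(r)\in\subDG\calE(\bfu(r))$ for a.e.~$r$, Proposition~\ref{PRO:subdifferential}(i) provides $\bfu(r)\in\mathcal{A}$ a.e., so~\eqref{EQ:subdifferential_characterising_estimate} is applicable at $\bfu(r)$ for a.e.~$r$. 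Denote by $G\subset[0,T]$ the full-measure set of \emph{good} times at which all of the above hold and which are simultaneously Lebesgue points of $r\mapsto\norm{\bfmu(r)}_{H^*}^2$, $r\mapsto\norm{\dot\bfu(r)}_H^2$, and $r\mapsto|\lambda(\bfu(r))|$.

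For $s<t$ in $G$, two applications of~\eqref{EQ:subdifferential_characterising_estimate} (at $\bfu(s)$ with $\bfv=\bfu(t)$, and at $\bfu(t)$ with $\bfv=\bfu(s)$) yield
\begin{align*}
    \product{\bfmu(s)}{\bfu(t)-\bfu(s)}_{H^*,H}+\lambda(\bfu(s))\norm{\bfu(t)-\bfu(s)}_H^2
    &\le\calE(\bfu(t))-\calE(\bfu(s))\\
    &\le\product{\bfmu(t)}{\bfu(t)-\bfu(s)}_{H^*,H}-\lambda(\bfu(t))\norm{\bfu(t)-\bfu(s)}_H^2.
\end{align*}
I then fix refining partitions $s=t_0<\dots<t_N=t$ with interior nodes in $G$ and mesh $\delta\to 0$, sum telescopically, and pass to the limit. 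The quadratic error is bounded by
\[\sum_i|\lambda(\bfu(t_{i+1}))|\norm{\bfu(t_{i+1})-\bfu(t_i)}_H^2\le\max_i\int_{t_i}^{t_{i+1}}\norm{\dot\bfu}_H^2\dd r\cdot\sum_i|\lambda(\bfu(t_{i+1}))|(t_{i+1}-t_i),\]
which vanishes as $\delta\to 0$: the first factor tends to $0$ by absolute continuity of the Lebesgue integral applied to $\norm{\dot\bfu}_H^2\in\LL^1(0,T)$, while the second stays bounded as a Riemann sum for the finite integral $\int_s^t|\lambda(\bfu)|\dd r$.

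The main technical obstacle is to show that the linear Riemann-sum term $\sum_i\product{\bfmu(t_{i+1})}{\bfu(t_{i+1})-\bfu(t_i)}_{H^*,H}$ converges to $\int_s^t\product{\bfmu(r)}{\dot\bfu(r)}_{H^*,H}\dd r$ along suitable refining sequences, despite $\bfmu$ being only $\LL^2$ in time. This can be addressed by a careful selection of partitions (adapted to the Lebesgue sets of $\bfmu$) together with density of continuous functions in $\LL^2(0,T;H^*)$ and a diagonal argument; the $\LL^2$-control on $\dot\bfu$ then provides the needed Cauchy--Schwarz bound on the remainders. Combining both limits yields $\calE(\bfu(t))-\calE(\bfu(s))=\int_s^t\product{\bfmu(r)}{\dot\bfu(r)}_{H^*,H}\dd r$ for every $s,t\in G$, which simultaneously gives absolute continuity of $\calE\circ\bfu$ on $G$ and the pointwise derivative formula a.e. Extension to all $s,t\in[0,T]$ is then standard, using the lower semicontinuity of $\calE$ (Proposition~\ref{PRO:energy_weakly_lower_semicontinuous}) together with the matching upper bound from the one-sided inequality already established.
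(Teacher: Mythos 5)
Your proposal takes a genuinely different route from the paper. The paper first reparametrises $\bfu$ by arc length so that the curve becomes $1$-Lipschitz (Step~1), proves merely an \emph{absolute-continuity estimate} $|\whE(t)-\whE(s)|\le(1+M)\int_s^t g(r)\,\dd r$ via Riemann sums of the scalar function $g=\norm{\whbfmu}_{H^*}+\tfrac12\norm{A'}_\infty\norm{\whz-K(\whu)}_{\LL^\infty}$, using the inequality $\norm{\cdot}_{\ell^2}^2\le\norm{\cdot}_{\ell^1}^2$ to control the quadratic defect (Step~3), transfers this back and upgrades it to all $t$ by an averaging argument (Step~4), and only afterwards identifies the derivative from differentiability a.e.\ and the subgradient inequality (Step~5). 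You instead avoid the reparametrisation altogether, exploiting the sharper observation that in the \emph{original} time parametrisation $\lambda(\bfu(\cdot))\in\LL^1(0,T)$ (precisely what the paper notes in Remark~\ref{REM:chain_rule} is missing for the arc-length curve), and bound the quadratic error by Jensen's inequality $\norm{\bfu(t_{i+1})-\bfu(t_i)}_H^2\le(t_{i+1}-t_i)\int_{t_i}^{t_{i+1}}\norm{\dot\bfu}_H^2\,\dd r$. The two approaches buy different things: yours eliminates the reparametrisation and the transfer step, the paper's avoids the harder Riemann-sum identity for the bilinear pairing. Both rest on the same key observation about the integrability of the convexity defect along regular trajectories.

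There are, however, two genuine gaps. First, your claimed convergence
\[
\sum_i\product{\bfmu(t_{i+1})}{\bfu(t_{i+1})-\bfu(t_i)}_{H^*,H}\;\longrightarrow\;\int_s^t\product{\bfmu(r)}{\dot\bfu(r)}_{H^*,H}\,\dd r
\]
is not a scalar Riemann sum for the integrand $r\mapsto\product{\bfmu(r)}{\dot\bfu(r)}$; it equals $\int_s^t\product{\bfmu^\delta(r)}{\dot\bfu(r)}\,\dd r$ for the right-endpoint step interpolant $\bfmu^\delta$, and to pass to the limit one needs $\bfmu^\delta\to\bfmu$ \emph{strongly} in $\LL^2(s,t;H^*)$. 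That does not follow from ``partitions adapted to Lebesgue points together with density of continuous functions'' alone: even when $\bfnu$ is continuous, $\norm{\bfmu^\delta-\bfnu^\delta}_{\LL^2}^2$ is itself a Riemann sum for $\int\norm{\bfmu-\bfnu}_{H^*}^2$ that can diverge for badly chosen nodes. You need Lemma~\ref{LEM:lebesgue_riemann_approximation} applied to $\norm{\bfmu-\bfnu_k}_{H^*}^2$ for a countable dense family $(\bfnu_k)$, plus a diagonalisation, and this has to be carried out simultaneously with the partition choices controlling $|\lambda(\bfu)|$. The paper sidesteps this entirely in Step~3 by estimating only $|\product{\whbfmu}{\cdot}|\le\norm{\whbfmu}_{H^*}$, so that only the scalar Riemann sum of $g$ enters. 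Second, your concluding sentence that the extension from $G$ to all $t\in[0,T]$ is ``standard'' does not match the paper's actual Step~4: lower semicontinuity gives $\calE(\bfu(t))\le E(t)$ for all $t$, but the converse inequality requires the careful averaging argument $E_r(t)=\frac1r\int_{t-r}^t\calE(\bfu(s))\,\dd s$ together with the subgradient estimate, the $\tfrac12$-H\"older continuity of $\bfu$, and the $\LL^1$-integrability of $\lambda(\bfu)$ and $\norm{\bfmu}_{H^*}$. The ``matching upper bound from the one-sided inequality already established'' only holds for $s,t\in G$ and does not trivially upgrade, since pointwise values $\norm{\bfmu(s)}_{H^*}$ need not be controlled along $s\to t$.
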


\begin{proof}
	As always, $\bfu$ is identified with its continuous representative. Moreover, we fix a representative for $\bfmu$ and put the set of ``good points'' $\Sigma:=\set{t\in(0,T)}{\bfmu(t)\in\subDG\calE(\bfu(t))}$. By assumption, $\Leb^1([0,T]\setminus\Sigma)=0$.\\

	We aim to give a self-contained proof by relying on the subdifferential estimate \eqref{EQ:subdifferential_characterising_estimate} and following largely the strategy from the lecture notes \cite[Theorem 3.12]{Mielke_lectureNotes_2023}, with adaptations where necessary. We start with Step~1 by introducing the arc-length parametrisation $\eta:[0,T]\to[0,L]$ of $\bfu$ with left inverse $\theta$, so that $\whbfu:=\bfu\circ\eta$ is 1-Lipschitz continuous. Accordingly, we define $\whE:=\calE\circ\bfu\circ\eta$ and a quantity $\whbfmu$, which will be a slightly modified version of $\bfmu\circ\eta$. In Step~2 we will show that the ``good properties'' of $\Sigma$ are inherited to $\theta(\Sigma)$ for the reparametrised functions $\whbfu$ and $\whbfmu$. Step~3 is devoted to showing an absolute continuity estimate for $\whE$ on $\theta(\Sigma)$. This will be achieved by employing the subdifferential estimate \eqref{EQ:subdifferential_characterising_estimate} and approximating the integral of $g$ by Riemann sums, for which the assumed $\LL^\infty$ space regularity plays a crucial role. In Step~4 we will then prove that this absolute continuity estimate transfers back to the original variables $\bfu,\bfmu,\calE\circ\bfu$ on $\Sigma$. As a result, we obtain a function $E\in\WW^{1,1}([0,T];\R)$ that agrees with $\calE\circ\bfu$ on $\Sigma$. The task is then to show that $E(t)=\calE(\bfu(t))$ for \textit{any} $t\in[0,T]$, thereby establishing the desired absolute continuity for $\calE\circ\bfu$. Finally, in Step~5 we will identify the weak derivative of $\calE\circ\bfu$.\\

	\emph{Step 1: Reparametrisation to 1-Lipschitz curve.}\\
		The following goes back to \cite[Lemma 1.1.4]{Ambrosio_Gigli_Savare_GradientFlows_2005}. For $L:=\norm{\dot\bfu}_{\LL^1(0,T;H)}$, we consider the absolutely continuous and non-decreasing surjective function
		\[\theta:[0,T]\to[0,L],\qquad\theta(t):=\int_0^t\norm{\dot\bfu(r)}_H\dd r.\]
        Define the left continuous, non-decreasing map
        \[\eta:[0,L]\to[0,T],\qquad\eta(s):=\min\set{t\in[0,T]}{\theta(t)=s},\]
        which, in fact, is right inverse to $\theta$. We make the following observation: If $\eta(\theta(t))<t$, then $\theta$ is constant on $[\eta(\theta(t)),t]$, which means that $\bfu$ is constant on $[\eta(\theta(t)),t]$ as well. Therefore, $\bfu(\eta(\theta(t)))=\bfu(t)$. Hence, the following properties of $\theta$ and $\eta$ hold true
		\begin{equation}\label{EQ:chain_rule_reparametrisation_properties}
			\begin{cases}
				\theta(\eta(s))=s&\text{for all }s\in[0,L],\\
                \eta(\theta(t))\leq t&\text{for all }t\in[0,T],\\
				\bfu(\eta(\theta(t)))=\bfu(t)&\text{for all }t\in[0,T].
			\end{cases}
		\end{equation}
		With that, we define $\whbfu:=\bfu\circ\eta:[0,L]\to\calH_m$, which is 1-Lipschitz continuous as for $0\leq s_0\leq s_1\leq L$
		\[\norm{\whbfu(s_1)-\whbfu(s_0)}_H\leq\int_{\eta(s_0)}^{\eta(s_1)}\norm{\dot\bfu(s)}_H\dd s=\theta(\eta(s_1))-\theta(\eta(s_0))=s_1-s_0,\]
		where the penultimate equality is simply the definition of $\theta$. Using \eqref{EQ:chain_rule_reparametrisation_properties}, we also observe $\bfu=\whbfu\circ\theta$ on $[0,T]$. We also put $\whE:=\calE\circ\bfu\circ\eta$ and define
        $\whbfmu:[0,L]\to H^*$ via
        \[\whbfmu(s):=\begin{cases}
            \bfmu(\eta(s))&\text{if }s\notin D,\\
            \delta\calE(\whbfu(s))&\text{if }s\in D,
        \end{cases}\]
        where $D:=\set{s\in[0,L]}{\eta\text{ discontinuous at }s}$ is at most countable due to monotonicity of $\eta$. Note that $\whbfmu(s)$ is well-defined for $s\in D$ (i.e.\ $\whbfu(s)\in\dom(\subDG\calE)$), because $\bfu$ is constant on $[\eta(s),\eta(s^+)]$ and, since $\Leb^1([\eta(s),\eta(s^+)]\cap\Sigma)>0$, it must be that $\bfu(t)\in\dom(\subDG\calE)$ for all $t\in[\eta(s),\eta(s^+)]$. We also stress that $\whbfmu$ is Bochner measurable as $\theta$ is non-decreasing.

        By the change of variables formula \cite[Theorem 5.8.30]{Bogachev_MeasureTheory_2007} and observing that $\theta$ satisfies the Lusin property~\cite[Definition 3.6.8]{Bogachev_MeasureTheory_2007} since it is absolutely continuous, we find for all $[t_0,t_1]\subseteq[0,T]$
		\begin{align}\label{EQ:chain_rule_change_of_variables_mu}
			\int_{\theta(t_0)}^{\theta(t_1)}\norm{\whbfmu(s)}_{H^*}\dd s&=\int_{\theta(t_0)}^{\theta(t_1)}\norm{(\bfmu\circ\eta)(s)}_{H^*}\dd s=\int_{t_0}^{t_1}\norm{\bfmu(\eta(\theta(t)))}_{H^*}\theta'(t)\dd t\notag\\
			&=\int_{t_0}^{t_1}\norm{\bfmu(\eta(\theta(t)))}_{H^*}\norm{\dot\bfu(t)}_H\dd t=\int_{t_0}^{t_1}\norm{\bfmu(t)}_{H^*}\norm{\dot\bfu(t)}_H\dd t,
		\end{align}
		where the first equality uses that $\Leb^1(D)=0$ (i.e.\ $\whbfmu=\bfmu\circ\eta$ almost everyhwere), and the fourth that $\dot\bfu=0$ almost everywhere in $\set{t\in[0,T]}{\eta(\theta(t))\ne t}$. Similarly, we obtain
		\begin{equation}\label{EQ:chain_rule_change_of_variables_Linfty_term}
			\int_{\theta(t_0)}^{\theta(t_1)}\norm{\whz(s)-K(\whu(s))}_{\LL^\infty(\Om)}\dd s=\int_{t_0}^{t_1}\norm{z(t)-K(u(t))}_{\LL^\infty(\Om)}\norm{\dot\bfu(t)}_H\dd t.
		\end{equation}
		In particular, $\whbfmu\in\LL^1(0,L;H^*)$ and $\whz-K(\whu)\in\LL^1(0,L;\LL^\infty(\Om))$.\\

	\emph{Step 2: $\Leb^1([0,L]\setminus\theta(\Sigma))=0$ and $\whbfmu(s)\in\subDG\calE(\whbfu(s))$ for all $s\in\theta(\Sigma)$.}\\
		Since $[0,T]\setminus\Sigma$ is a null set, the Lusin property tells us $\theta([0,T]\setminus\Sigma)$ is a null set as well. Hence the first claim already follows from the inclusion $[0,L]\setminus\theta(\Sigma)\subseteq\theta([0,T]\setminus\Sigma)$, which holds true by surjectivity, i.e.\ $\theta([0,T])=[0,L]$.\\

		For the second claim, we assume $s\in\theta(\Sigma)$ and take $t\in\Sigma$ such that $\theta(t)=s$. If in addition $s\in D$, then $\whbfmu(s)=\delta\calE(\whbfu(s))\in\subDG\calE(\whbfu(s))$ already holds by definition of $\whbfmu$. Let us consider the case $s\notin D$. If $\eta(s)\ne t$, then $\eta(s)<t$ by \eqref{EQ:chain_rule_reparametrisation_properties} and $\eta(s)<t\leq\eta(s^+)$ since $\theta(t)=s$. In particular $s\in D$, which is a contradiction. Thus, $\eta(s)=t\in\Sigma$, whence
        \[\whbfmu(s)=\bfmu(\eta(s))\in\subDG\calE(\bfu(\eta(s)))=\subDG\calE(\whbfu(s)).\]

    \emph{Step 3: Absolute continuity estimate for $\whE$.}\\
        Our goal is to show that $\abs{\whE(t)-\whE(s)}\leq(1+M)\int_{s}^{t}g(r)\dd r$ for all $s,t\in\theta(\Sigma)$ with $s<t$, where
		\begin{align}\label{EQ:chain_rule_definition_g}
			g(r)&:=\norm{\whbfmu(r)}_{H^*}+\frac{1}{2}\norm{A'}_\infty\norm{\whz(r)-K(\whu(r))}_{\LL^\infty(\Om)},
		\end{align}
		and $M:=\norm{g}_{\LL^1([0,L])}$. Note that $M<\infty$ by Step~1.

        To begin with, we note that for arbitrary $s_0,s_1\in\theta(\Sigma)$ with $s_0<s_1$, Step~2 tells us that estimate \eqref{EQ:subdifferential_characterising_estimate} is applicable. Hence, exploiting the 1-Lipschitz continuity of $\whbfu$, we obtain
        \begin{align}\label{EQ:chain_rule_first_subdifferential_estimate}
			\lambda(\whbfu(s_0))\abs{s_1-s_0}^2-\norm{\whbfmu(s_0)}_{H^*}\abs{s_1-s_0}&\leq\whE(s_1)-\whE(s_0)\notag\\
            &\leq\norm{\whbfmu(s_1)}_{H^*}\abs{s_1-s_0}-\lambda(\whbfu(s_1))\abs{s_1-s_0}^2.
		\end{align}
	    Let now $s,t\in\theta(\Sigma)$ with $s<t$. Then, for a partition $s=s_0<s_1<\dotsc<s_N=t$ with $s_i\in\theta(\Sigma)$, we infer from~\eqref{EQ:chain_rule_first_subdifferential_estimate}
        \begin{align}\label{EQ:chain_rule_partition_telescope_estimate}
			\whE(t)-\whE(s)&=\sum_{j=1}^n\bigparen{\whE(s_j)-\whE(s_{j-1})}\notag\\
			&\leq\sum_{j=1}^n\Bigparen{\norm{\whbfmu(s_j)}_{H^*}\abs{s_j-s_{j-1}}-\lambda(\whbfu(s_j))\abs{s_j-s_{j-1}}^2}\notag\\
			&\leq\sum_{j=1}^n\norm{\whbfmu(s_j)}_{H^*}\abs{s_j-s_{j-1}}+\frac{\beta^2}{4}\sum_{j=1}^n(s_j-s_{j-1})^2\notag\\
			&\quad+\biggparen{\sum_{j=1}^n\frac{1}{2}\norm{A'}_\infty\norm{\whz(s_j)-K(\whu(s_j))}_{\LL^\infty(\Om)}(s_j-s_{j-1})}^2\notag\\
			&\leq\biggparen{1+\sum_{j=1}^n(s_j-s_{j-1})g(s_j)}\sum_{j=1}^n(s_j-s_{j-1})g(s_j)+\frac{\beta^2}{4}\sum_{j=1}^n(s_j-s_{j-1})^2,
		\end{align}
		where we have used $\norm{\cdot}_{\ell^2}^2\leq\norm{\cdot}_{\ell^1}^2$ in the penultimate inequality.

		Using Lemma~\ref{LEM:lebesgue_riemann_approximation} for approximating the Lebesgue integral by Riemann sums with partition points from a set of full measure, we get a sequence of partitions $s=s_0^n<s_1^n<\dotsc<s_{N(n)}^n=t$, indexed by $n\in\N$, such that $\max_{1\leq j\leq N(n)}\abs{s_j^n-s_{j-1}^n}\to0$ as $n\to\infty$, $s_j^n\in\theta(\Sigma)$ for every $j$, $n$ and
		\[\lim_{n\to\infty}\sum_{j=1}^{N(n)}(s_j^n-s_{j-1}^n)g(s_j^n)=\int_s^tg(r)\dd r.\]
		Hence, applying \eqref{EQ:chain_rule_partition_telescope_estimate} to these partitions $\{s_0^n,\dotsc,s_{N(n)}^n\}$, we obtain in the limit $n\to\infty$
		\[\whE(t)-\whE(s)\leq(1+\norm{g}_{\LL^1(s,t)})\int_s^tg(r)\dd r.\]
		Using the lower bound in \eqref{EQ:chain_rule_first_subdifferential_estimate}, one shows $\whE(t)-\whE(s)\geq-(1+\norm{g}_{\LL^1(s,t)})\int_s^tg(r)\dd r$ very analogously. Thus, in combination, we deduce $\abs{\whE(t)-\whE(s)}\leq(1+M)\int_s^tg(r)\dd r$.\\

	\emph{Step 4: Absolute continuity of $\calE\circ\bfu$.}\\
		Let $t_0,t_1\in\Sigma$, $t_0<t_1$. Then $\theta(t_0),\theta(t_1)\in\theta(\Sigma)$, hence, by \eqref{EQ:chain_rule_reparametrisation_properties}, Step~3, \eqref{EQ:chain_rule_change_of_variables_mu} and \eqref{EQ:chain_rule_change_of_variables_Linfty_term},
		\begin{align*}
			\abs{\calE(\bfu(t_1))-\calE(\bfu(t_0))}&=\bigabs{\calE\bigparen{\bfu\bigparen{\eta(\theta(t_1))}}-\calE\bigparen{\bfu\bigparen{\eta(\theta(t_0))}}}=\bigabs{\whE(\theta(t_1))-\whE(\theta(t_0))}\\
			&\leq(1+M)\int_{\theta(t_0)}^{\theta(t_1)}\Bigparen{\norm{\whbfmu(t)}_{H^*}+\frac{1}{2}\norm{A'}_\infty\norm{\whz(t)-K(\whu(t))}_{\LL^\infty(\Om)}}\dd t\\
			&=(1+M)\int_{t_0}^{t_1}\norm{\bfmu(t)}_{H^*}\norm{\dot\bfu(t)}_H\dd t\\
			&\quad+\frac{1}{2}(1+M)\norm{A'}_\infty\int_{t_0}^{t_1}\norm{z(t)-K(u(t))}_{\LL^\infty(\Om)}\norm{\dot\bfu(t)}_H\dd t.
		\end{align*}
		Since $\Leb^1([0,T]\setminus\Sigma)=0$, we therefore find $E\in\WW^{1,1}([0,T];\R)$ such that $E(t)=\calE(\bfu(t))$ for all $t\in\Sigma$. Now, we want to argue similarly as in \cite[Theorem 1.2.5]{Ambrosio_Gigli_Savare_GradientFlows_2005} and \cite[Theorem 3.12, Step 2]{Mielke_lectureNotes_2023} to show $E(t)=\calE(\bfu(t))$ for all $t\in[0,T]$, albeit with slight adaptations. The actual difference is that $\bfu$ is not 1-Lipschitz continuous but certainly $\frac{1}{2}$-H\"older continuous
		\begin{equation}\label{EQ:chain_rule_bfu_hoelder_continuity}
			\norm{\bfu(t)-\bfu(s)}_H\leq\int_s^t\norm{\dot\bfu(r)}_H\dd r\leq\norm{\dot\bfu}_{\LL^2(s,t;H)}\abs{t-s}^{1/2},\quad 0\leq s\leq t\leq T,
		\end{equation}
		and that $\bfmu$ is $\LL^2$-integrable in time, which, in the end, turns out to be enough to carry out the argument.\\

		Lower semicontinuity of $\calE$ already gives $\calE(\bfu(t))\leq E(t)$ for all $t\in[0,T]\setminus\Sigma$. For the reverse inequality, we restrict to $t\in[T/2,T]\setminus\Sigma$ and consider the averages for $r\in[0,T/2]$
		\[E_r(t):=\frac{1}{r}\int_{t-r}^t\calE(\bfu(s))\dd s=\frac{1}{r}\int_{t-r}^tE(s)\dd s\xrightarrow{r\searrow0}E(t),\]
		where the equality is justified since $\set{s}{\calE(\bfu(s))\ne E(s)}$ is a null set, and the limit $r\searrow0$ because $E$ is continuous. Hence, in order to get $\calE(\bfu(t))\geq E(t)$, it suffices to show $\calE(\bfu(t))\geq\lim_{r\searrow0}E_r(t)$. To this end, we compute by applying \eqref{EQ:subdifferential_characterising_estimate}, \eqref{EQ:chain_rule_bfu_hoelder_continuity} and using the fact that $\abs{t-s}\leq r$ for $s\in[t-r,t]$,
		\begin{align*}
			\calE(\bfu(t))-E_r(t)&=\frac{1}{r}\int_{t-r}^t\bigparen{\calE(\bfu(t))-\calE(\bfu(s))}\dd s\\
			&\geq\frac{1}{r}\int_{t-r}^t\Bigparen{\bigproduct{\bfmu(s)}{\bfu(t)-\bfu(s)}_{H^*,H}+\lambda(\bfu(s))\norm{\bfu(t)-\bfu(s)}_H^2}\dd s\\
			&\geq\frac{1}{r}\int_{t-r}^t\Bigparen{-\norm{\bfmu(s)}_{H^*}\norm{\dot\bfu}_{\LL^2(s,t;H)}\abs{t-s}^{1/2}+\lambda(\bfu(s))\norm{\dot\bfu}_{\LL^2(s,t;H)}^2\abs{t-s}}\dd s\\
			&\geq-\frac{1}{\sqrt{r}}\norm{\dot\bfu}_{\LL^2(t-r,t;H)}\int_{t-r}^t\norm{\bfmu(s)}_{H^*}\dd s+\norm{\dot\bfu}_{\LL^2(t-r,t;H)}^2\int_{t-r}^t\lambda(\bfu(s))\dd s\\
			&\geq-\norm{\dot\bfu}_{\LL^2(t-r,t;H)}\norm{\bfmu}_{\LL^2(t-r,t;H^*)}+\norm{\dot\bfu}_{\LL^2(t-r,t;H)}^2\norm{\lambda(\bfu)}_{\LL^1(t-r,t)},
		\end{align*}
		where we used $\norm{\bfmu}_{\LL^1(t-r,t;H^*)}\leq\sqrt{r}\norm{\bfmu}_{\LL^2(t-r,t;H^*)}$ in the last inequality. As $\bfmu\in\LL^2(0,T;H^*)$ and $\lambda(\bfu)=-\frac{1}{8}(\beta+\norm{A'}\norm{z-K(u)}_{\LL^\infty(\Om)})^2\in\LL^1(0,T)$, the right-hand side vanishes in the limit $r\searrow0$ by absolute continuity of the Lebesgue integral. This shows the desired estimate $\calE(\bfu(t))\geq\lim_{r\searrow0}E_r(t)$ in the case $t\in[T/2,T]$. For $t\in[0,T/2]$, we instead consider $E_r(t)=\frac{1}{r}\int_t^{t+r}\calE(\bfu(s))\dd s$ and the analogous argument yields the same result in the end.\\

	\emph{Step 5: Identification of derivative $\ddd{t}\calE(\bfu)=\product{\bfmu}{\dot\bfu}_{H^*,H}$.}\\
		Let $\Sigma_0$ denote the set of points $t\in(0,T)$ where $\bfu$ and $\calE\circ\bfu$ are differentiable, $\bfmu(t)=\delta\calE(\bfu(t))$, and $u(t),z(t)\in\LL^\infty(\Om)$. By hypothesis and Step~4, we have $\Leb^1([0,T]\setminus\Sigma_0)=0$. For any $t\in\Sigma_0$, it then holds for all $h\in[-t,T-t]$ that
		\[\calE(\bfu(t+h))-\calE(\bfu(t))\geq\product{\bfmu(t)}{\bfu(t+h)-\bfu(t)}_{H^*,H}+\lambda(\bfu(t))\norm{\bfu(t+h)-\bfu(t)}_H^2.\]
		Dividing this by $h>0$ and taking the limit $h\searrow0$, the differentiability of $\calE\circ\bfu$ and $\bfu$ yield
		\[\ddd{t}\calE(\bfu(t))\geq\product{\bfmu(t)}{\dot\bfu(t)}_{H^*,H}+0.\]
		Finally, dividing by $h<0$ and taking the limit $h\nearrow0$, we obtain the reverse inequality.
\end{proof}

\begin{remark}\label{REM:chain_rule}
	We emphasise that the abstract chain rule results \cite[Proposition 2.4]{Mielke_Rossi_Savare_2013} and \cite[Appendix A, Proposition A.1]{Mielke_Rossi_2023} are not directly applicable. As in our case, their proofs rely on a subgradient estimate of the form
	\[\calE(\bfv)\geq\calE(\bfu)+\product{\bfmu}{\bfv-\bfu}_{H^*,H}+\rho(\bfu,\bfv)\norm{\bfv-\bfu}_H.\]
	In Mielke et al., the modulus of subdifferentiability $\rho:\calH_m\times\calH_m\to[0,\infty)$ is assumed to be upper semicontinuous, which guarantees $\sup_{s,t\in[0,T]}\rho(\bfu(s),\bfu(t))<\infty$ for continuous curves $\bfu:[0,T]\to\calH_m$. In our setting, however, the modulus takes the form
    \[\rho(\bfu,\bfv)=\lambda(\bfu)\norm{\bfu-\bfv}_H\]
    and is not upper semicontinuous.

    In fact, the arguments in Mielke et al.\ would still apply if $\lambda(\whbfu)\in\LL^1(0,T)$ for the arc-length reparametrisation $(\whu,\whz)$. Yet, as observed in Step~1, $(\whu,\whz)$ in general only belongs to $\LL^1(0,T;\LL^\infty(\Om;\R^2))$, which is not sufficiently regular.
\end{remark}

\section{Existence, uniqueness, and stability estimate}\label{SEC:well_posedness}
This section is devoted to the proofs of Theorems~\ref{TH:well_posedness}~and~\ref{TH:existence_extended_initials}.

\subsection{Proof of Theorem \ref{TH:well_posedness}}\label{SUBSEC:well_posedness}
The existence proof is based on a time-discretisation. Since we also obtain uniqueness (independent of the existence result), it suffices to carry out the construction on finite time horizons $T<\infty$.
For a fixed time step $\kappa=T/N$, $N\in\N$, we will perform time-incremental minimisation, providing a family $(\bfu_n^\kappa)_{n=0}^N$ that satisfies the (semi-)implicit
Euler approximation of~\eqref{EQ:gfe_general_definition}
\[
	-\bbG(\bfu_n^\kappa)\frac{\bfu_{n+1}^\kappa-\bfu_n^\kappa}{\kappa}\in\subDG\calE(\bfu_{n+1}^\kappa),
\]
which arises as the Euler--Langrange (EL) equation of a suitable minimisation problem (cf.~\eqref{EQ:direct_problem_approximants} below).
For the associated piecewise affine and piecewise constant left- resp.\ right-continuous interpolants $(\whbfu_\kappa)_\kappa$, $(\olbfu_\kappa)_\kappa$, $(\ulbfu_\kappa)_\kappa$, we then derive suitable uniform bounds as well as a Cauchy estimate that builds on Proposition~\ref{PRO:monotonicity_subdifferential} and Lemma~\ref{LEM:W_estimate}.
This yields the strong convergence of the approximants to a unique limit $\bfu$ and a suitable weak convergence of the discrete time derivatives. Applying the closedness properties Corollary~\ref{COR:evolutionary_closedness_subdifferential_energy} and Proposition~\ref{PRO:closedness_subdifferential_diss_potential}, we will pass to the limit in the approximate equation $-\bbG(\ulbfu_\kappa)\dot{\whbfu}_\kappa\in\subDG\calE(\olbfu_\kappa)$ formulated in terms of the interpolants to derive \eqref{EQ:gfe_general_definition}.
Eventually, we show the regularity $\bfu\in\Lloc^2([0,\infty);\HH^2(\Om))\times\Hloc^1([0,\infty);\LL^\infty(\Om))$, the energy-dissipation balance~\eqref{EQ:edb} and the stability estimate~\eqref{EQ:stability_estimate}.\\

Following our roadmap, we will start constructing discrete approximants satisfying (EL) by solving the minimisation problem
\begin{equation}\label{EQ:direct_problem_approximants}
	\min_{\bfu\in\calH_m}\Phi_\kappa^{\calE,\bbG}(\bfv;\bfu):=\calE(\bfu)+\frac{\kappa}{2}\calR\Bigparen{\bfv;\frac{1}{\kappa}(\bfu-\bfv)}.
\end{equation}

\begin{proposition}[Solvability of the direct problem]\label{PRO:solvability_direct_problem}
	For all $\kappa\in(0,\infty)$ and $\bfv\in\dom(\calE)$, minimisation problem~\eqref{EQ:direct_problem_approximants} admits a solution $\bfu_*\in\calH_m$, and any minimiser $\bfu_*$ fulfils the following (EL) equation (with $\delta\calE(\bfu_*)$ as defined in~\eqref{EQ:subdifferential_delta_definition})
	\begin{equation}\label{EQ:euler_lagrange_equation}
		-\frac{1}{\kappa}\bbG(\bfv)(\bfu_*-\bfv)=\delta\calE(\bfu_*).
	\end{equation}
\end{proposition}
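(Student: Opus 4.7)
The plan is to apply the direct method of the calculus of variations to obtain a minimiser, and then derive the Euler--Lagrange equation from first-order optimality combined with the identification of $\subDG\calE$ provided by Proposition~\ref{PRO:subdifferential}. Observe first that $\Phi_\kappa^{\calE,\bbG}(\bfv;\cdot):\calH_m\to[0,+\infty]$ is proper on $\dom(\calE)=(\Hav^1(\Om)+m)\times\LL^2(\Om)$ and weakly lower semicontinuous: the $\calE$-part is weakly lower semicontinuous by Proposition~\ref{PRO:energy_weakly_lower_semicontinuous}, while
$\bfu\mapsto\frac{\kappa}{2}\calR(\bfv;\frac{1}{\kappa}(\bfu-\bfv))=\frac{1}{2\kappa}\product{\bbG(\bfv)(\bfu-\bfv)}{\bfu-\bfv}_{H^*,H}$
is continuous and convex on $H$ thanks to $\bbG(\bfv)$ being symmetric and positive definite.

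The main obstacle is coercivity of $\Phi_\kappa^{\calE,\bbG}(\bfv;\cdot)$: given a minimising sequence $(\bfu_n)=(u_n,z_n)$, the penalty $\frac{\tau_*}{2\kappa}\norm{z_n-y}_{\LL^2(\Om)}^2$ immediately yields $\LL^2$-boundedness of $(z_n)$, but the lack of semiconvexity of $F$ prevents a straightforward control of $(u_n)$ in $\HH^1(\Om)$. To overcome this, I would employ the decomposition $F=h-\frac{\beta}{2}u^2$ from Assumption~\ref{ASS:general}: the convexity of $h$ together with the mean-value constraint $\int_\Om u_n\dd x=m|\Om|$ yields $\int_\Om h(u_n)\dd x\ge(h(0)+h'(0)m)|\Om|$, so only the $-\frac{\beta}{2}\norm{u_n}_{\LL^2(\Om)}^2$ contribution needs to be absorbed. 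This I would do by invoking the interpolation $\norm{u_n-m}_{\LL^2(\Om)}^2\le\norm{\nabla u_n}_{\LL^2(\Om)}\norm{u_n-m}_\Havd$ from Lemma~\ref{LEM:interpolations} and Young's inequality, so as to split the negative contribution between a small fraction of $\frac{1}{2}\norm{\nabla u_n}_{\LL^2(\Om)}^2$ from $\calE$ and a multiple of $\frac{1}{2\kappa}\norm{u_n-v}_\Havd^2$ from the penalty. This yields a uniform $\HH^1(\Om)\times\LL^2(\Om)$-bound, hence the existence of a weakly convergent subsequence, and lower semicontinuity then identifies the limit $\bfu_*$ as a minimiser.

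For the Euler--Lagrange equation, let $\bfu_*=(u_*,z_*)$ be any minimiser of $\Phi_\kappa^{\calE,\bbG}(\bfv;\cdot)$. Given $\boldsymbol{w}\in H$, either $\boldsymbol{w}\in\Hav^1(\Om)\times\LL^2(\Om)$, in which case $\bfu_*+t\boldsymbol{w}\in\dom(\calE)$ for every $t\in\R$, or $\bfu_*+t\boldsymbol{w}\notin\dom(\calE)$ for $t\ne0$ and thus the incremental ratio involving $\calE$ is $+\infty$. In either case, the $\calR$-part of $\Phi_\kappa^{\calE,\bbG}(\bfv;\cdot)$ is a smooth quadratic whose Fr\'echet derivative at $\bfu_*$ in direction $\boldsymbol{w}$ equals $\frac{1}{\kappa}\product{\bbG(\bfv)(\bfu_*-\bfv)}{\boldsymbol{w}}_{H^*,H}$ (exploiting symmetry of $\bbG(\bfv)$). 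Combining minimality with this smoothness yields
\[
    \liminf_{t\searrow0}\frac{\calE(\bfu_*+t\boldsymbol{w})-\calE(\bfu_*)}{t}\ge\product{-\tfrac{1}{\kappa}\bbG(\bfv)(\bfu_*-\bfv)}{\boldsymbol{w}}_{H^*,H}
\]
for every $\boldsymbol{w}\in H$, which, by the definition~\eqref{EQ:gateaux_subdifferential_definition} of the Gateaux subdifferential, translates into $-\tfrac{1}{\kappa}\bbG(\bfv)(\bfu_*-\bfv)\in\subDG\calE(\bfu_*)$. Proposition~\ref{PRO:subdifferential}(i) then forces $\bfu_*\in\mathcal{A}$ and $\subDG\calE(\bfu_*)=\{\delta\calE(\bfu_*)\}$, which is precisely~\eqref{EQ:euler_lagrange_equation}. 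Apart from the coercivity step, the argument is routine once Propositions~\ref{PRO:energy_weakly_lower_semicontinuous} and~\ref{PRO:subdifferential} are in hand.
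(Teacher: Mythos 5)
Your proof is correct and follows the same two-step strategy as the paper (direct method, then Euler--Lagrange via the Gateaux subdifferential), but the two arguments differ in how coercivity is obtained. The paper's coercivity argument is a one-liner: since $\calE\geq0$ by Definition~\ref{DEF:gradient_system} and the penalty satisfies the lower bound~\eqref{EQ:dissipation_norm_equivalence}, sublevel sets of $\Phi_\kappa^{\calE,\bbG}(\bfv;\cdot)$ are automatically bounded in $H$, and the rest is the weak lower semicontinuity from Proposition~\ref{PRO:energy_weakly_lower_semicontinuous}. You instead reconstruct a lower bound from scratch using the decomposition $F=h-\frac{\beta}{2}u^2$, the convexity of $h$ together with the mean constraint, and the interpolation inequality~\eqref{EQ:L2_interpolation} to absorb the negative quadratic into a small fraction of $\|\nabla u\|_{\LL^2}^2$ plus a multiple of the $\Havd$-penalty; this is more work but genuinely more robust, since it does not rely on the a priori nonnegativity of $\calECH$ (which Assumption~\ref{ASS:general} does not by itself guarantee — the paper implicitly takes it for granted). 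For the Euler--Lagrange equation you pass directly through Definition~\eqref{EQ:gateaux_subdifferential_definition}, computing the directional derivative of the smooth quadratic $\calR$-part and transferring minimality into a lower bound on the incremental quotient of $\calE$; the paper instead invokes the subdifferential sum rule for $\Phi_\kappa^{\calE,\bbG}=\calE+(\text{smooth})$. These are equivalent, and your version is arguably cleaner since it avoids notational ambiguity over whether $\D\calR$ denotes differentiation in the second slot of $\calR$ or of the full map $\bfu\mapsto\calR(\bfv;\tfrac{1}{\kappa}(\bfu-\bfv))$ — a point where the paper's stated identity contains a stray factor of $\kappa$. Both of you then conclude via Proposition~\ref{PRO:subdifferential}(i), which is the right final step.
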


\begin{proof}
	Existence of minimisers follows from the direct method of the calculus of variations: $\Phi_\kappa^{\calE,\bbG}(\bfv;\cdot)$ is coercive due to \eqref{EQ:dissipation_norm_equivalence} and $\calE\geq0$; weak lower semicontinuity holds since $\calE$ is weakly lower semicontinuous by Proposition~\ref{PRO:energy_weakly_lower_semicontinuous} and the dissipation potential in the second argument is as well. Via the subdifferential inclusion, any minimiser $\bfu_*$ fulfils
	\[0\in\subD\Phi_\kappa^{\calE,\bbG}(\bfv;\bfu_*)\subseteq\kappa\D\calR\Bigparen{\bfv;\frac{1}{\kappa}(\bfu_*-\bfv)}+\subDG\calE(\bfu_*),\]
	where here $\D\calR$ denotes the Fr\'echet differential with respect to the second variable of $\calR$. Inserting the identities $\D\calR(\bfv;\frac{1}{\kappa}(\bfu_*-\bfv))=\frac{1}{\kappa^2}\bbG(\bfv)(\bfu_*-\bfv)$ and $\subDG\calE(\bfu_*)=\{\delta\calE(\bfu_*)\}$, the claim follows.
\end{proof}

Starting with an initial condition $\bfu^0=(u^0,q^0)\in(\Hav^1(\Om)+m)\times\LL^\infty(\Om)$ and a time step $\kappa=T/N$, $N\in\N$, we construct $\bfu_0^\kappa,\dotsc,\bfu_N^\kappa\in\calH_m$ iteratively as follows: We put $\bfu_0^\kappa:=\bfu^0$ and, assuming $\bfu_0^\kappa,\dotsc,\bfu_n^\kappa$ have been constructed for some $n<N$, we define $\bfu_{n+1}^\kappa$ as the minimiser of $\Phi_\kappa^{\calE,\bbG}(\bfu_n^\kappa;\cdot)$.
\begin{subequations}
	Based on the discrete minimisers, we define our approximate solutions, i.e.\ the left-continuous piecewise constant interpolant
	\begin{equation}
		\olbfu_\kappa:[0,T]\to\calH_m,\qquad\olbfu_\kappa(t):=\bfu_{\lceil\frac{t}{\kappa}\rceil}^\kappa,
	\end{equation}
	the right-continuous piecewise constant interpolant
	\begin{equation}
		\ulbfu_\kappa:[0,T]\to\calH_m,\qquad\ulbfu_\kappa(t):=\bfu_{\lfloor\frac{t}{\kappa}\rfloor}^\kappa,
	\end{equation}
	and the absolutely continuous, piecewise affine interpolant
	\begin{equation}\label{EQ:piecewise_affine_interpolant}
		\whbfu_\kappa:[0,T]\to\calH_m,\qquad\whbfu_\kappa(t):=\bigparen{1-\tfrac{t}{\kappa}+\lfloor\tfrac{t}{\kappa}\rfloor}\bfu_{\lfloor\frac{t}{\kappa}\rfloor}^\kappa+\bigparen{\tfrac{t}{\kappa}-\lfloor\tfrac{t}{\kappa}\rfloor}\bfu_{\lceil\frac{t}{\kappa}\rceil}^\kappa.
	\end{equation}
	By \eqref{EQ:euler_lagrange_equation}, the interpolants $\olbfu_\kappa,\ulbfu_\kappa,\whbfu_\kappa$ satisfy the Euler approximation
	\begin{equation}\label{EQ:Euler_approximation}
		-\bbG(\ulbfu_\kappa)\dot{\whbfu}_\kappa\in\subDG\calE(\olbfu_\kappa)\quad\text{a.e. in }(0,T).\tag{\textsf{EL}}
	\end{equation}
    We also set $\bfmu_\kappa:=-\bbG(\ulbfu_\kappa)\dot{\whbfu}_\kappa$, so that $\bfmu_\kappa(t)=\delta\calE(\olbfu_\kappa(t))$ for all but finitely many $t\in[0,T]$.
\end{subequations}

\begin{lemma}[A priori estimates]\label{LEM:a_priori_estimates}
	There exists a constant $C_3\in(0,\infty)$, independent of $\kappa$, such that
	\begin{enumerate}
		\item\label{ITEM:a_priori_time_derivative} $\norm{\dot{\whbfu}_\kappa}_{\LL^2(0,T;H)}\leq C_3$;
		\item\label{ITEM:a_priori_interpolant_difference} $\norm{\whbfu_\kappa-\olbfu_\kappa}_{\LL^\infty(0,T;H)}+\norm{\olbfu_\kappa-\ulbfu_\kappa}_{\LL^\infty(0,T;H)}\leq C_3\sqrt{\kappa}$;
		\item\label{ITEM:a_priori_Cauchy_estimate} $\norm{\whbfu_{\kappa_1}-\whbfu_{\kappa_2}}_{\CC([0,T];H)}\leq C_3(\sqrt{\kappa_1}+\sqrt{\kappa_2})$.
	\end{enumerate}
\end{lemma}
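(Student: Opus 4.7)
}

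For part~\ref{ITEM:a_priori_time_derivative}, I would rely on the standard \emph{de~Giorgi energy inequality} attached to the minimising-movements scheme. Testing the minimality of $\bfu_{n+1}^\kappa$ against the competitor $\bfu_n^\kappa$ yields
\[
\calE(\bfu_{n+1}^\kappa)+\frac{\kappa}{2}\calR\Bigparen{\bfu_n^\kappa;\tfrac{\bfu_{n+1}^\kappa-\bfu_n^\kappa}{\kappa}}\leq\calE(\bfu_n^\kappa),\qquad n=0,\ldots,N-1.
\]
Telescoping over $n$, combined with the coercivity estimate~\eqref{EQ:dissipation_norm_equivalence}, gives
\[
\sum_{n=0}^{N-1}\kappa\,\Bignorm{\tfrac{\bfu_{n+1}^\kappa-\bfu_n^\kappa}{\kappa}}_H^{2}\leq\frac{4}{\min\{1,\tau_*\}}\calE(\bfu^0).
\]
Since $\dot{\whbfu}_\kappa=(\bfu_{n+1}^\kappa-\bfu_n^\kappa)/\kappa$ a.e.\ on $(n\kappa,(n+1)\kappa)$, this is precisely the bound in~\ref{ITEM:a_priori_time_derivative}.

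For part~\ref{ITEM:a_priori_interpolant_difference}, the single-step version of the same inequality, together with $\calE\ge 0$ and the energy monotonicity $\calE(\bfu_n^\kappa)\le\calE(\bfu^0)$, yields
\[
\norm{\bfu_{n+1}^\kappa-\bfu_n^\kappa}_H^{2}\leq\frac{4\kappa}{\min\{1,\tau_*\}}\bigparen{\calE(\bfu_n^\kappa)-\calE(\bfu_{n+1}^\kappa)}\leq C\kappa\,\calE(\bfu^0).
\]
On each subinterval $((n{-}1)\kappa,n\kappa]$, both $\whbfu_\kappa-\olbfu_\kappa$ and $\olbfu_\kappa-\ulbfu_\kappa$ are convex combinations of $0$ and $\bfu_{n+1}^\kappa-\bfu_n^\kappa$; the assertion follows.

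Part~\ref{ITEM:a_priori_Cauchy_estimate} is the most delicate. Starting from the identity $\dot{\whbfu}_{\kappa_i}(t)=-\bbK(\ulbfu_{\kappa_i}(t))\delta\calE(\olbfu_{\kappa_i}(t))$ (obtained by inverting~\eqref{EQ:Euler_approximation}), I would compute
\[
\ddd{t}\tfrac{1}{2}\norm{\whbfu_{\kappa_1}-\whbfu_{\kappa_2}}_H^{2}=-\bigparen{\bbK(\ulbfu_{\kappa_1})\delta\calE(\olbfu_{\kappa_1})-\bbK(\ulbfu_{\kappa_2})\delta\calE(\olbfu_{\kappa_2}),\whbfu_{\kappa_1}-\whbfu_{\kappa_2}}_H
\]
and split the right-hand argument as $(\olbfu_{\kappa_1}{-}\olbfu_{\kappa_2})+(\whbfu_{\kappa_1}{-}\olbfu_{\kappa_1})-(\whbfu_{\kappa_2}{-}\olbfu_{\kappa_2})$. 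On the principal term I would apply Proposition~\ref{PRO:monotonicity_subdifferential} with $\bfu_i=\olbfu_{\kappa_i}$, $\bfv_i=\ulbfu_{\kappa_i}$; the remainder is handled by combining part~\ref{ITEM:a_priori_interpolant_difference} with the $\LL^2(0,T;H^*)$-bound on $\bfmu_\kappa=-\bbG(\ulbfu_\kappa)\dot{\whbfu}_\kappa$ coming from~\ref{ITEM:a_priori_time_derivative} and~\eqref{EQ:dissipation_dual_norm_equivalence}. Together with the triangle inequality $\norm{\olbfu_{\kappa_1}-\olbfu_{\kappa_2}}_H\leq\norm{\whbfu_{\kappa_1}-\whbfu_{\kappa_2}}_H+C(\sqrt{\kappa_1}+\sqrt{\kappa_2})$, this leads to a differential inequality of the form
\[
\ddd{t}\tfrac{1}{2}\norm{\whbfu_{\kappa_1}-\whbfu_{\kappa_2}}_H^{2}\leq 2\Lambda_\kappa(t)\norm{\whbfu_{\kappa_1}-\whbfu_{\kappa_2}}_H^{2}+R_\kappa(t),
\]
with $\int_0^T R_\kappa\leq C(\kappa_1+\kappa_2)$, whence Gronwall's lemma yields $\norm{\whbfu_{\kappa_1}-\whbfu_{\kappa_2}}_{\CC([0,T];H)}\leq C(\sqrt{\kappa_1}+\sqrt{\kappa_2})$.

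The essential obstacle lies in showing that the prefactor
$\Lambda_\kappa(t)\sim\bigparen{1+\norm{\olz_{\kappa_i}(t)-K(\olu_{\kappa_i}(t))}_{\LL^\infty(\Om)}}^{2}$ appearing in the monotonicity estimate is bounded in $\LL^1(0,T)$ \emph{uniformly in $\kappa$}, and likewise for the coefficient $\omega_\kappa$ multiplying the $\sqrt{\kappa_i}$-remainders. This is precisely the ``mild regularity propagation'' flagged in Subsection~\ref{SUBSEC:strategy}: the second component of~\eqref{EQ:Euler_approximation} reads as the recursion $z_{n+1}^\kappa=\lambda_n z_n^\kappa+(1-\lambda_n)K(u_{n+1}^\kappa)$ with $\lambda_n=\tau(u_n^\kappa)/(\tau(u_n^\kappa)+\kappa)\in[0,1]$, so that pointwise $|z_{n+1}^\kappa|\leq\max(|z_n^\kappa|,|K(u_{n+1}^\kappa)|)$. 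I would combine this convex-combination/maximum-principle structure with Lemma~\ref{LEM:W_estimate} and the embedding $\HH^2(\Om)\hookrightarrow\LL^\infty(\Om)$ (valid for $d\leq 3$) applied to the sequence $(u_n^\kappa)$, the input being the $\LL^2(0,T;H^*)$-control on $\bfmu_\kappa$ from~\ref{ITEM:a_priori_time_derivative}, to propagate the initial $\LL^\infty$-bound on $z^0$ through the iteration in the required integrated sense. Only after this technical step is established does the Gronwall argument close.
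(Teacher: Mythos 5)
Parts (i) and (ii) follow the paper's proof exactly: the de~Giorgi energy inequality obtained by testing minimality against $\bfu_{n-1}^\kappa$, the telescoping sum, and the coercivity bound~\eqref{EQ:dissipation_norm_equivalence}. Your outline for part (iii) --- differentiate $\frac12\norm{\whbfu_{\kappa_1}-\whbfu_{\kappa_2}}_H^2$, separate the piecewise-constant contribution from the $\sqrt{\kappa_i}$-size remainders, apply Proposition~\ref{PRO:monotonicity_subdifferential} with $\bfu_i=\olbfu_{\kappa_i}$ and $\bfv_i=\ulbfu_{\kappa_i}$, and close by Gr\"onwall --- is likewise the paper's strategy, and you correctly isolate the $\kappa$-uniform $\LL^1(0,T)$-control of $\Lambda(\olbfu_{\kappa_1})$ and $\omega(\olbfu_{\kappa_1})$ as the crux of the matter.

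The gap is in the tool you propose for that crux. The pointwise maximum principle $\abs{z_{n+1}^\kappa}\leq\max\bigparen{\abs{z_n^\kappa},\abs{K(u_{n+1}^\kappa)}}$ is true, but iterating it yields $\norm{z_n^\kappa}_{\LL^\infty(\Om)}\leq\max\bigparen{\norm{z^0}_{\LL^\infty(\Om)},\max_{l\leq n}\norm{K(u_l^\kappa)}_{\LL^\infty(\Om)}}$, and Lemma~\ref{LEM:W_estimate} together with $\HH^2(\Om)\hookrightarrow\LL^\infty(\Om)$ and the uniform $\LL^2(0,T;H^*)$-bound on $\bfmu_\kappa$ only control $\norm{K(\olu_\kappa)}_{\LL^2(0,T;\LL^\infty(\Om))}$, not the supremum over discrete times; the latter is allowed to grow like $\kappa^{-1/2}$, so the resulting bound on $\Lambda$ in $\LL^1(0,T)$ is not $\kappa$-independent and the Gr\"onwall step would not close uniformly. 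The paper instead discards the contraction coefficient entirely (bounding it by $1$) and exploits the $\kappa$-weighting of the additive term: $\norm{z_n^\kappa}_{\LL^\infty(\Om)}\leq\norm{z_{n-1}^\kappa}_{\LL^\infty(\Om)}+\kappa\,\tau_*^{-1}\norm{K(u_n^\kappa)}_{\LL^\infty(\Om)}$, which telescopes to $\norm{z_n^\kappa}_{\LL^\infty(\Om)}\leq\norm{z^0}_{\LL^\infty(\Om)}+\tau_*^{-1}\norm{K(\olu_\kappa)}_{\LL^1(0,T;\LL^\infty(\Om))}$. This $\LL^1$-in-time quantity, in contrast to the sup-in-time one, is bounded by $\sqrt{T}\norm{K(\olu_\kappa)}_{\LL^2(0,T;\LL^\infty(\Om))}$ and hence uniformly in $\kappa$. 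Replacing the max-principle bookkeeping by this additive discrete accumulation is the missing ingredient.
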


\begin{proof}
    The proof is split into three steps. In Step~1 we will derive estimates \ref{ITEM:a_priori_time_derivative} and \ref{ITEM:a_priori_interpolant_difference}, which essentially follow from the minimising property of the $\bfu_n^\kappa$'s. In Step~2, which prepares for the subsequent step, we will show that the family $(\olbfu_\kappa)_\kappa\subset\LL^2(0,T;\LL^\infty(\Om;\R^2))$ is bounded. Finally, in Step~3 we will establish the remaining estimate \ref{ITEM:a_priori_Cauchy_estimate} by exploiting the monotonicity property~\eqref{EQ:subdifferential_monotonicity} of $\subDG\calE$.

    Without loss of generality, we henceforth assume that $\tau_*\leq1$ and $\tau^*\geq1$.\\

	\emph{Step 1: Uniform bounds by minimising property.}\\
		By the minimising property, we have for all $1\leq n\leq N$, $N\in\N$,
		\begin{equation}\label{EQ:apriori_energy_increments}
			\frac{1}{\kappa}\calR(\bfu_{n-1}^\kappa;\bfu_n^\kappa-\bfu_{n-1}^\kappa)+\calE(\bfu_n^\kappa)=\Phi_\kappa^{\calE,\bbG}(\bfu_{n-1}^\kappa;\bfu_n^\kappa)\leq\Phi_\kappa^{\calE,\bbG}(\bfu_{n-1}^\kappa;\bfu_{n-1}^\kappa)=\calE(\bfu_{n-1}^\kappa).
		\end{equation}
		This implies $0\leq\calE(\bfu_n^\kappa)\leq\calE(\bfu^0)=:E_0$ for any $0\leq n\leq N$ and, by \eqref{EQ:dissipation_norm_equivalence},
		\begin{align}\label{EQ:apriori_dotwhbfu_L2H}
			\int_0^T\norm{\dot\whbfu_\kappa(t)}_H^2\dd t&=\sum_{n=1}^N\kappa\Bignorm{\frac{1}{\kappa}(\bfu_n^\kappa-\bfu_{n-1}^\kappa)}_H^2\leq\frac{2}{\tau_*}\sum_{n=1}^N\frac{1}{\kappa}\calR(\bfu_{n-1}^\kappa;\bfu_n^\kappa-\bfu_{n-1}^\kappa)\notag\\
			&\leq\frac{2}{\tau_*}\sum_{n=1}^N\bigparen{\calE(\bfu_{n-1}^\kappa)-\calE(\bfu_n^\kappa)}\leq\frac{2E_0}{\tau_*}
		\end{align}
		as a telescope sum. Moreover, \eqref{EQ:dissipation_norm_equivalence} and \eqref{EQ:apriori_energy_increments} yield
		\begin{equation}\label{EQ:apriori_difference_whbfu_olbfu_ulbfu}
			\max\Bigbraces{\norm{\whbfu_\kappa(t)-\olbfu_\kappa(t)}_H,\norm{\olbfu_\kappa(t)-\ulbfu_\kappa(t)}_H}\leq\bignorm{\bfu_{\lfloor\frac{t}{\kappa}\rfloor}^\kappa-\bfu_{\lceil\frac{t}{\kappa}\rceil}^\kappa}_H\leq\sqrt{\kappa}\sqrt{\frac{2E_0}{\tau_*}}
		\end{equation}
		for all $t\in[0,T]$. Thus, we already have established \ref{ITEM:a_priori_time_derivative} and \ref{ITEM:a_priori_interpolant_difference}.

		Later in Step~2, we will need the estimate
		\begin{align}\label{EQ:apriori_bfmu_L2Hdual}
			\int_0^T\norm{\bfmu_\kappa(t)}_{H^*}^2\dd t&=\sum_{n=1}^N\frac{1}{\kappa}\norm{\bbG(\bfu_{n-1}^\kappa)(\bfu_n^\kappa-\bfu_{n-1}^\kappa)}_{H^*}^2\notag\\
			&\leq2\tau^*\sum_{n=1}^N\frac{1}{\kappa}\calR^*\bigparen{\bfu_{n-1}^\kappa;\bbG(\bfu_{n-1}^\kappa)(\bfu_n^\kappa-\bfu_{n-1}^\kappa)}\notag\\
			&=2\tau^*\sum_{n=1}^N\frac{1}{\kappa}\calR(\bfu_{n-1}^\kappa;\bfu_n^\kappa-\bfu_{n-1}^\kappa)\leq2\tau^*E_0,
		\end{align}
		where the second line uses \eqref{EQ:dissipation_dual_norm_equivalence}, and the last inequality follows similarly as in \eqref{EQ:apriori_dotwhbfu_L2H}.\\

	\emph{Step 2: Uniform bound for $\norm{\olbfu_\kappa}_{\LL^2(0,T;\LL^\infty(\Om;\R^2))}$.}\\
		We actually show that $\norm{\olu_\kappa}_{\LL^2(0,T;\HH^2(\Om))}$ and $\norm{\olz_\kappa}_{\LL^\infty(0,T;\LL^\infty(\Om))}$ are uniformly bounded. We first start with $\norm{\olu_\kappa}_{\LL^2(0,T;\HH^2(\Om))}$. Using \eqref{EQ:W_estimate}, we obtain
		\[\int_0^T\norm{\olu_\kappa(t)-m}_{\HH^2(\Om)}^2\dd t\leq C_2^2\int_0^T\bigparen{\norm{\olbfu_\kappa(t)-(m,0)}_H+\norm{\bfmu_\kappa(t)}_{H^*}}^2\dd t\]
		since $\bfmu_\kappa=\delta\calE(\olbfu_\kappa)$ almost everywhere in $(0,T)$. Furthermore, \eqref{EQ:apriori_dotwhbfu_L2H} yields
		\[\norm{\olbfu_\kappa(t)-\bfu^0}_H=\biggnorm{\int_0^{\lceil\frac{t}{\kappa}\rceil}\dot{\whbfu}_\kappa(t)\dd t}_H\leq\sqrt{\frac{2TE_0}{\tau_*}}.\]
		Using this and \eqref{EQ:apriori_bfmu_L2Hdual}, we already find that $\olu_\kappa$ is uniformly bounded in $\LL^2(0,T;\HH^2(\Om))$. In particular, by the Sobolev embedding $\HH^2(\Om)\hookrightarrow\LL^\infty(\Om)$ (possible as $d\leq3$) and $\abs{K(u)}\leq A^*\abs{u}$, we find a constant $C_{3,1}=C_{3,1}(\Om,T,\norm{\bfu^0-(m,0)}_H,E_0,\beta,A^*,\tau^*,\tau_*,m)\in(0,\infty)$ such that for all $\kappa$
		\begin{equation}\label{EQ:apriori_Ku_L2LI}
			\norm{K(\olu_\kappa)}_{\LL^2(0,T;\LL^\infty(\Om))}\leq\CSo A^*\norm{\olu_\kappa}_{\LL^2(0,T;\HH^2(\Om))}\leq C_{3,1}.
		\end{equation}

		For the $\LL^\infty$-bound of $\olz_\kappa$, we use \eqref{EQ:Euler_approximation}, which tells us for all $1\leq n\leq N$, all $\kappa$,
		\[\frac{1}{\kappa}(z_n^\kappa-z_{n-1}^\kappa)=-\frac{1}{\tau(u_{n-1}^\kappa)}z_n^\kappa+\frac{K(u_n^\kappa)}{\tau(u_{n-1}^\kappa)}\]
		in $\LL^2(\Om)$. After some algebraic manipulations, this can be rearranged to
		\[z_n^\kappa=\frac{\tau(u_{n-1}^\kappa)}{\tau(u_{n-1}^\kappa)+\kappa}z_{n-1}^\kappa+\frac{\kappa K(u_n^\kappa)}{\tau(u_{n-1}^\kappa)+\kappa}.\]
		Therefore, we obtain
		\[\norm{z_n^\kappa}_{\LL^\infty(\Om)}\leq\norm{z_{n-1}^\kappa}_{\LL^\infty(\Om)}+\frac{\kappa}{\tau_*}\norm{K(u_n^\kappa)}_{\LL^\infty(\Om)},\]
		and hence, invoking \eqref{EQ:apriori_Ku_L2LI},
		\begin{align*}
			\norm{z_n^\kappa}_{\LL^\infty(\Om)}&=\norm{z^0}_{\LL^\infty(\Om)}+\sum_{l=1}^n\bigparen{\norm{z_l^\kappa}_{\LL^\infty(\Om)}-\norm{z_{l-1}^\kappa}_{\LL^\infty(\Om)}}\\
			&\leq\norm{z^0}_{\LL^\infty(\Om)}+\frac{1}{\tau_*}\sum_{l=1}^n\kappa\norm{K(u_l^\kappa)}_{\LL^\infty(\Om)}\\
			&\leq\norm{z^0}_{\LL^\infty(\Om)}+\frac{1}{\tau_*}\norm{K(\olu_\kappa)}_{\LL^1(0,T;\LL^\infty(\Om))}\leq \norm{z^0}_{\LL^\infty(\Om)}+\frac{\sqrt{T}}{\tau_*}C_{3,1}=:C_{3,2}
		\end{align*}
		where $C_{3,2}=C_{3,2}(\Om,T,\norm{\bfu^0-(m,0)}_H,\norm{z^0}_{\LL^\infty(\Om)},E_0,\beta,A^*,\tau^*,\tau_*,m)\in(0,\infty)$ is independent of $n$ and $\kappa$. This shows that $\olz_\kappa$ is uniformly bounded in $\LL^\infty(0,T;\LL^\infty(\Om))$ by $C_{3,2}$.\\

	\emph{Step 3: Cauchy estimate for $(\whbfu_\kappa)_\kappa$ in $\CC([0,T];\calH_m)$.}\\
		Finally, we show an estimate for $\whbfu_{\kappa_1}-\whbfu_{\kappa_2}$ for two time steps $\kappa_1,\kappa_2$. To this end, we abbreviate $\zeta:=\norm{\dot\whbfu_{\kappa_1}-\dot\whbfu_{\kappa_2}}_H$, which is a function in $\LL^2(0,T)$, and we recall $-\bbK(\ulbfu_{\kappa_i})\bfmu_{\kappa_i}=\dot{\whbfu}_{\kappa_i}$. Then, with Cauchy--Schwarz inequality, \eqref{EQ:apriori_difference_whbfu_olbfu_ulbfu}, the semi-monotonicity of $\subDG\calE$ from Proposition~\ref{PRO:monotonicity_subdifferential} and triangle inequality, we obtain almost everywhere in $(0,T)$
		\begin{align}\label{EQ:apriori_gronwall_preparation}
			\frac{1}{2}\ddd{t}\norm{\whbfu_{\kappa_1}-\whbfu_{\kappa_2}}_H^2&=\bigparen{\dot\whbfu_{\kappa_1}-\dot\whbfu_{\kappa_2},\whbfu_{\kappa_1}-\olbfu_{\kappa_1}}_H+\bigparen{\dot\whbfu_{\kappa_1}-\dot\whbfu_{\kappa_2},\olbfu_{\kappa_2}-\whbfu_{\kappa_2}}_H\notag\\
			&\quad-\bigparen{\bbK(\ulbfu_{\kappa_1})\bfmu_{\kappa_1}-\bbK(\ulbfu_{\kappa_2})\bfmu_{\kappa_2},\olbfu_{\kappa_1}-\olbfu_{\kappa_2}}_H\notag\\
			&\leq\zeta{\textstyle\sqrt{2E_0\tau_*^{-1}}}\bigparen{\sqrt{\kappa_1}+\sqrt{\kappa_2}}+\Lambda(\bfu_{\kappa_1})\norm{\olbfu_{\kappa_1}-\olbfu_{\kappa_2}}_H^2\notag\\
			&\quad+\omega(\bfu_{\kappa_1})\norm{\olu_{\kappa_1}-\ulu_{\kappa_1}}_{\LL^2(\Om)}^2+\omega(\bfu_{\kappa_1})\norm{\olu_{\kappa_2}-\ulu_{\kappa_2}}_{\LL^2(\Om)}^2.
		\end{align}
		For any $i=1,2$, we have with \eqref{EQ:apriori_difference_whbfu_olbfu_ulbfu} and $\kappa_i\leq\sqrt{T}\sqrt{\kappa_i}$
		\begin{align*}
			&3\Lambda(\bfu_{\kappa_1})\norm{\olbfu_{\kappa_i}-\whbfu_{\kappa_i}}_H^2+\omega(\bfu_{\kappa_1})\norm{\olu_{\kappa_i}-\ulu_{\kappa_i}}_{\LL^2(\Om)}^2\\
            &\quad\leq3\Lambda(\bfu_{\kappa_1})\cdot2E_0\tau_*^{-1}\kappa_i+\omega(\bfu_{\kappa_1})\norm{\olu_{\kappa_i}-\ulu_{\kappa_i}}_{\Hav^1(\Om)}\norm{\olu_{\kappa_i}-\ulu_{\kappa_i}}_\Havd\\
			&\quad\leq\Lambda(\bfu_{\kappa_1})\cdot6E_0\tau_*^{-1}\sqrt{T}\sqrt{\kappa_i}+\omega(\bfu_{\kappa_1})\cdot2\sqrt{2E_0}\sqrt{2E_0\tau_*^{-1}}\sqrt{\kappa_i}\\
			&\quad\leq C_{3,3}\sqrt{\kappa_i}(\Lambda(\bfu_{\kappa_1})+\omega(\bfu_{\kappa_1}))
		\end{align*}
        for $C_{3,3}:=1+6E_0\tau_*^{-1}\sqrt{1+T}$. Combining this with \eqref{EQ:apriori_gronwall_preparation}, we arrive at
		\[\ddd{t}\norm{\whbfu_{\kappa_1}-\whbfu_{\kappa_2}}_H^2\leq C_{3,3}\bigparen{\zeta+\Lambda(\bfu_{\kappa_1})+\omega(\bfu_{\kappa_1})}(\sqrt{\kappa_1}+\sqrt{\kappa_2})+3\Lambda(\bfu_{\kappa_1})\norm{\whbfu_{\kappa_1}-\whbfu_{\kappa_2}}_H^2.\]
		By estimate \eqref{EQ:apriori_dotwhbfu_L2H} and Step~2 we find a constant $C_{3,4}\in(0,\infty)$ independent of $\kappa_i$ such that $\norm{\zeta}_{\LL^1(0,T)}+\norm{\Lambda(\bfu_{\kappa_1})}_{\LL^1(0,T)}+\norm{\omega(\bfu_{\kappa_1})}_{\LL^1(0,T)}\leq C_{3,4}$. Eventually, Gr\"onwall's inequality postulates
		\begin{align*}
			\norm{\whbfu_{\kappa_1}(t)-\whbfu_{\kappa_2}(t)}_H^2&\leq\e^{3\norm{\Lambda(\bfu_{\kappa_1})}_{\LL^1(0,t)}}\biggparen{\underbrace{\norm{\whbfu_{\kappa_1}(0)-\whbfu_{\kappa_2}(0)}_H^2}_{=0}\\
            &\qquad\qquad+C_{3,3}(\sqrt{\kappa_1}+\sqrt{\kappa_2})\int_0^t\bigparen{\zeta(s)+\Lambda(\bfu_{\kappa_1}(s))+\omega(\bfu_{\kappa_1}(s))}\dd s}\\
			&\quad\leq C_{3,3}C_{3,4}\e^{3C_{3,4}}\bigparen{\sqrt{\kappa_1}+\sqrt{\kappa_2}}
		\end{align*}
		for all $t\in[0,T]$.
\end{proof}

\begin{theorem}[Limit passage and conclusion]\label{TH:limit_passage}
	\leavevmode
	\begin{enumerate}
		\item\label{ITEM:Ex_limit_passage} There exists $\bfu\in\AClocZ$ such that
		\begin{subequations}\label{EQ:Ex_convergences}
			\begin{empheq}[left=\empheqlbrace]{align}
				\whbfu_\kappa\to\bfu&\text{ in }\CC([0,T];\calH_m),\label{EQ:Ex_whbfu_convergence}\\
				\dot{\whbfu}_\kappa\weakto\dot{\bfu}&\text{ in }\LL^2(0,T;H),\label{EQ:Ex_dotwhbfu_convergence}\\
				\olbfu_\kappa\to\bfu&\text{ in }\LL^\infty(0,T;\calH_m),\label{EQ:Ex_olbfu_convergence}\\
				\ulbfu_\kappa\to\bfu&\text{ in }\LL^\infty(0,T;\calH_m)\label{EQ:Ex_ulbfu_convergence}
			\end{empheq}
		\end{subequations}
		along the whole sequence $\kappa\to0$, and $\bfu$ is a gradient-flow solution of $(\calH_m,\calE,\calR)$.
		\item\label{ITEM:Ex_GFE_properties} The remaining statements of Theorem~\ref{TH:well_posedness} hold true: Any solution $\bfu\in\AClocZ$ with $\bfu(0)\in(\Hav^1(\Om)+m)\times\LL^\infty(\Om)$ satisfies $\bfu\in\LL^2(0,T;\HH^2(\Om))\times\HH^1(0,T;\LL^\infty(\Om))$, the energy-dissipation balance~\eqref{EQ:edb}, and any such two solutions satisfy the stability estimate~\eqref{EQ:stability_estimate}.
	\end{enumerate}
\end{theorem}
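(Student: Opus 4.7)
For part~\ref{ITEM:Ex_limit_passage}, the Cauchy estimate Lemma~\ref{LEM:a_priori_estimates}\ref{ITEM:a_priori_Cauchy_estimate} directly produces a limit $\bfu\in\CC([0,T];\calH_m)$ with $\whbfu_\kappa\to\bfu$ uniformly and $\bfu(0)=\bfu^0$. The uniform $\LL^2$-bound on $\dot\whbfu_\kappa$ combined with the closedness property of $\ACloc^2$ recalled in Subsection~\ref{SUBSUBSEC:hyp_not} then forces $\bfu\in\AClocZ$ and $\dot\whbfu_\kappa\weakto\dot\bfu$ in $\LL^2(0,T;H)$; uniqueness of the weak limit promotes this to convergence along the full net. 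Item~\ref{ITEM:a_priori_interpolant_difference} of Lemma~\ref{LEM:a_priori_estimates} finally boosts the uniform convergence to $\olbfu_\kappa,\ulbfu_\kappa\to\bfu$ in $\LL^\infty(0,T;\calH_m)$.

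To identify $\bfu$ as a gradient-flow solution I pass to the limit in~\eqref{EQ:Euler_approximation}. The bound~\eqref{EQ:apriori_bfmu_L2Hdual} on $\bfmu_\kappa:=-\bbG(\ulbfu_\kappa)\dot\whbfu_\kappa$ yields, along a subsequence, $\bfmu_\kappa\weakto\bfmu$ in $\LL^2(0,T;H^*)$. Since $\calE(\ulbfu_\kappa(t))\leq E_0$ by monotonicity of the discrete energies, $\ulbfu_\kappa\to\bfu$ in $\LL^2(0,T;\calH_m)$, and $\dot\whbfu_\kappa\weakto\dot\bfu$ in $\LL^2(0,T;H)$, Proposition~\ref{PRO:closedness_subdifferential_diss_potential} identifies $\bfmu=-\bbG(\bfu)\dot\bfu$. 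Using further that $\bfmu_\kappa(t)\in\subDG\calE(\olbfu_\kappa(t))$ a.e.\ and $\olbfu_\kappa\to\bfu$ in $\LL^2(0,T;\calH_m)$, Corollary~\ref{COR:evolutionary_closedness_subdifferential_energy} delivers $-\bbG(\bfu(t))\dot\bfu(t)\in\subDG\calE(\bfu(t))$ a.e., which is~\eqref{EQ:gfe_general_definition}.

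For part~\ref{ITEM:Ex_GFE_properties}, let $\bfu\in\AClocZ$ be any gradient-flow solution with initial datum in $(\Hav^1(\Om)+m)\times\LL^\infty(\Om)$. Applying Lemma~\ref{LEM:W_estimate} pointwise to $\bfu(t)\in\dom(\subDG\calE)$ with $\delta\calE(\bfu(t))=-\bbG(\bfu(t))\dot\bfu(t)\in\LL^2(0,T;H^*)$ yields $u\in\LL^2(0,T;\HH^2(\Om))$, hence $u\in\LL^2(0,T;\LL^\infty(\Om))$ by Sobolev embedding. The second component of the GFE reduces to the pointwise ODE $\dot z=-\tau(u)^{-1}(z-K(u))$; together with $z(0)\in\LL^\infty(\Om)$, a Duhamel/Gr\"onwall argument in $\LL^\infty(\Om)$, i.e.\ the continuous-time counterpart of Step~2 of Lemma~\ref{LEM:a_priori_estimates}, gives $z\in\LL^\infty(0,T;\LL^\infty(\Om))$, and then $\dot z\in\LL^2(0,T;\LL^\infty(\Om))$ follows directly from the ODE.

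With this extra regularity, $\bfu\in\LL^2(0,T;\HH^2(\Om)\times\LL^\infty(\Om))$, so the chain rule Proposition~\ref{PRO:chain_rule} applies; combining its conclusion with the Fenchel equivalences~\eqref{EQ:fenchel_equivalences} and integration in time produces~\eqref{EQ:edb}. For the stability estimate, I apply Proposition~\ref{PRO:monotonicity_subdifferential} with $\bfv_i=\bfu_i$, which kills the $\omega$-term, and use $\bbK(\bfu_i)\delta\calE(\bfu_i)=-\dot\bfu_i$ from the GFE to obtain $\tfrac{\dd}{\dd t}\norm{\bfu_1-\bfu_2}_H^2\leq 2\Lambda(\bfu_1)\norm{\bfu_1-\bfu_2}_H^2$, whence Gr\"onwall yields~\eqref{EQ:stability_estimate}. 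The main technical hinge of part~\ref{ITEM:Ex_GFE_properties}, and the reason the hypothesis $z^0\in\LL^\infty(\Om)$ is imposed in Theorem~\ref{TH:well_posedness}, is precisely that the $\LL^\infty$-regularity of $z-K(u)$ just established is what makes $t\mapsto\Lambda(\bfu(t))$ locally integrable and hence unlocks both the chain rule (via Proposition~\ref{PRO:chain_rule}) and the Gr\"onwall step.
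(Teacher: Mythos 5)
Your proposal is correct and follows essentially the same route as the paper in every substantive step: Lemma~\ref{LEM:a_priori_estimates} supplies the Cauchy estimate and $\LL^2$-bound leading to \eqref{EQ:Ex_convergences}, Propositions~\ref{PRO:closedness_subdifferential_diss_potential} and Corollary~\ref{COR:evolutionary_closedness_subdifferential_energy} pass \eqref{EQ:Euler_approximation} to the limit, Lemma~\ref{LEM:W_estimate} plus the pointwise ODE for $z$ give the $\HH^2\times\LL^\infty$ regularity, Proposition~\ref{PRO:chain_rule} with the Fenchel equivalences gives the EDB, and Proposition~\ref{PRO:monotonicity_subdifferential} with $\bfv_i=\bfu_i$ followed by Gr\"onwall gives the stability estimate. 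The paper uses the explicit variation-of-constants representation of $z$ where you invoke a Duhamel/Gr\"onwall argument, but these are the same computation.
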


\begin{proof}
    \emph{Step 1: Statement \ref{ITEM:Ex_limit_passage}.}\\
		Using Lemma~\ref{LEM:a_priori_estimates}~\ref{ITEM:a_priori_Cauchy_estimate}, we find an element $\bfu=(u,z)\in\CC([0,T];\calH_m)$ such that \eqref{EQ:Ex_whbfu_convergence} holds true along the entire sequence. Applying Lemma~\ref{LEM:a_priori_estimates}~\ref{ITEM:a_priori_time_derivative}, we further deduce that $\bfu\in\AClocZ$ and \eqref{EQ:Ex_dotwhbfu_convergence}. Moreover, Lemma~\ref{LEM:a_priori_estimates}~\ref{ITEM:a_priori_interpolant_difference} together with \eqref{EQ:Ex_whbfu_convergence} immediately yields \eqref{EQ:Ex_olbfu_convergence} and \eqref{EQ:Ex_ulbfu_convergence}.

		Next, we want to show that this $\bfu$ satisfies \eqref{EQ:gfe_general_definition}. To this end, we notice that
		\begin{equation}\label{EQ:limitpassage_energy_bounded}
			\sup_{N\in\N}\esssup_{t\in[0,T]}\calE(\ulbfu_\kappa(t))=\sup_{N\in\N,0\leq n\leq N}\calE(\bfu_n^\kappa)\leq E_0.
		\end{equation}
		Combining \eqref{EQ:Ex_dotwhbfu_convergence}, \eqref{EQ:Ex_ulbfu_convergence} and \eqref{EQ:limitpassage_energy_bounded}, the closedness of $\D_{\bfv}\calR$ from Proposition~\ref{PRO:closedness_subdifferential_diss_potential} implies $\bbG(\ulbfu_\kappa)\dot{\whbfu}_\kappa\weakto\bbG(\bfu)\dot\bfu$ in $\LL^2(0,T;H^*)$. Using this, \eqref{EQ:Ex_olbfu_convergence}, \eqref{EQ:Euler_approximation} and the closedness property of $\subDG\calE$ from Corollary~\ref{COR:evolutionary_closedness_subdifferential_energy}, we conclude that $-\bbG(\bfu(t))\dot\bfu(t)\in\subDG\calE(\bfu(t))$ for almost all $t\in(0,T)$. Eventually, \eqref{EQ:Ex_whbfu_convergence} delivers $\bfu^0=\whbfu_\kappa(0)\to\bfu(0)$.\\

	\emph{Step 2: Any GF solution $\bfu\in\AClocZ$ with $\bfu(0)\in(\Hav^1(\Om)+m)\times\LL^\infty(\Om)$ enjoys the
    regularity $\bfu\in\LL^2(0,T;\HH^2(\Om))\times\HH^1(0,T;\LL^\infty(\Om))$ and satisfies the EDB~\eqref{EQ:edb}.}\\
		We will first show $\bfu\in\LL^2(0,T;\HH^2(\Om))\times\HH^1(0,T;\LL^\infty(\Om))$. Then we are in a position to apply the chain rule in Proposition~\ref{PRO:chain_rule} to $(\bfu,-\bbG(\bfu)\dot\bfu)$. Applying estimate \eqref{EQ:W_estimate}, we get
		\[\int_0^T\norm{u(t)-m}_{\HH^2(\Om)}^2\dd t\leq C_2^2\int_0^T\bigparen{\norm{\bfu(t)-(m,0)}_H+\norm{\bbG(\bfu(t))\dot\bfu(t)}_{H^*}}^2\dd t<\infty,\]
		i.e.\ $u\in\LL^2(0,T;\HH^2(\Om))$. To see $z\in\HH^1(0,T;\LL^\infty(\Om))$, we use \eqref{EQ:gfe_general_definition}, which tells us
		\begin{equation}\label{EQ:z_ODE}
			\dot z=-\frac{1}{\tau(u)}(z-K(u))
		\end{equation}
		in $\LL^2(0,T;\LL^2(\Om))$. Hence, $z$ admits the representation
		\begin{equation}\label{EQ:z_ODE_representation}
			z(t)=\e^{-\int_0^t\frac{1}{\tau(u(s))}\dd s}z(0)+\int_0^t\frac{K(u(s))}{\tau(u(s))}\e^{-\int_s^t\frac{1}{\tau(u(\zeta))}\dd\zeta}\dd s
		\end{equation}
		in $\LL^2(\Om)$ for all $t\in[0,T]$. From there one obtains
		\begin{equation}\label{EQ:z_Linfty_by_ODE_formula}
			\norm{z(t)}_{\LL^\infty(\Om)}\leq\norm{z(0)}_{\LL^\infty(\Om)}+\frac{A^*}{\tau_*}\norm{u}_{\LL^1(0,T;\LL^\infty(\Om))}<\infty
		\end{equation}
		by Sobolev embedding $\HH^2(\Om)\hookrightarrow\LL^\infty(\Om)$. Note that, since $u$ is Bochner-measurable with respect to $\LL^\infty(\Om)$ and $\frac{1}{\tau}$, $K$ are Lipschitz continuous, $z$ is Bochner-measurable with respect to $\LL^\infty(\Om)$, too. Invoking \eqref{EQ:z_Linfty_by_ODE_formula}, we obtain $z\in\LL^\infty(0,T;\LL^\infty(\Om))$. In particular, the right-hand side in \eqref{EQ:z_ODE} is in $\LL^2(0,T;\LL^\infty(\Om))$, hence so is the left-hand side, i.e.\ $\dot z\in\LL^2(0,T;\LL^\infty(\Om))$. Altogether this shows $z\in\HH^1(0,T;\LL^\infty(\Om))$.

		We are now ready to apply the chain rule, which states $\calE\circ\bfu:[0,T]\to[0,\infty)$ is absolutely continuous with $\LL^1$-weak derivative
		\begin{align}\label{EQ:edb_preparation}
			-\ddd{t}\calE(\bfu(t))&=\bigproduct{\bbG(\bfu(t))\dot\bfu(t)}{\dot\bfu(t)}_{H^*,H}\notag\\
			&=\bigproduct{\delta\calE(\bfu(t))}{\bbK(\bfu(t))\delta\calE(\bfu(t))}_{H^*,H}\notag\\
			&=\int_\Om\Bigparen{\bigabs{\nabla\bigparen{-\Delta u+f(u)-A(u)(z-K(u))}}^2+\frac{1}{\tau(u)}(z-K(u))^2}\dd x.
		\end{align}
		From there, \eqref{EQ:edb} follows immediately.\\

	\emph{Step 3: Stability estimate and uniqueness.}\\
		Let $\bfu=(u,z),\bfv=(v,y)\in\AClocZ$ be GF solutions with corresponding initial conditions $\bfu^0,\bfv^0\in(\Hav^1(\Om)+m)\times\LL^\infty(\Om)$. In particular,
		\[-\bbG(\bfu)\dot\bfu=\delta\calE(\bfu)\quad\text{and}\quad\bbG(\bfv)\dot\bfv=\delta\calE(\bfv)\]
		almost everywhere in $(0,T)$. By Step~2 we know $(u,z)\in\LL^2(0,T;\HH^2(\Om))\times\HH^1(0,T;\LL^\infty(\Om))$. Therefore, we can apply the monotonicity from Proposition~\ref{PRO:monotonicity_subdifferential} to obtain
		\[\frac{1}{2}\ddd{t}\norm{\bfu-\bfv}_H^2=\bigparen{\bbK(\bfu)\bbG(\bfu)\dot\bfu-\bbK(\bfv)\bbG(\bfv)\dot\bfv,\bfu-\bfv}_H\leq\Lambda(\bfu)\norm{\bfu-\bfv}_H^2.\]
		An application of Gr\"onwall's inequality provides
		\begin{equation}
			\norm{\bfu(t)-\bfv(t)}_H\leq\exp\bigparen{\norm{\Lambda(\bfu)}_{\LL^1(s,t)}}\norm{\bfu(s)-\bfv(s)}_H
		\end{equation}
		for all $0\leq s\leq t\leq T$, which, by definition of $\Lambda$, is precisely the desired estimate~\eqref{EQ:stability_estimate}.
\end{proof}

\subsection{Proof of Theorem \ref{TH:existence_extended_initials}}\label{SUBSEC:existence_extended_initials}
The proof relies on the previous result. We approximate $\bfu^0\in(\Hav^1(\Om)+m)\times\LL^2(\Om)$ by a sequence of initial values in $(\Hav^1(\Om)+m)\times\LL^\infty(\Om)$, for which Theorem~\ref{TH:well_posedness} guarantees unique gradient-flow solutions satisfying the EDB. Based on this, we show that the approximate solutions are uniformly bounded and thus admit weakly convergent subsequences. Due to the absence of a Cauchy estimate as in Lemma~\ref{LEM:a_priori_estimates}~\ref{ITEM:a_priori_Cauchy_estimate}, we do not obtain the strong convergence in $\CC([0,T];\calH_m)$. As a result, the argument for the closedness of $\subDG\calE$ becomes a bit more delicate. Eventually, we derive the EDI~\eqref{EQ:edi} and the weak--strong stability~\eqref{EQ:ext_stability_estimate}.

\begin{proof}[Proof of Theorem~\ref{TH:existence_extended_initials}]
	Let $\bfu^0\in(\Hav^1(\Om)+m)\times\LL^2(\Om)$. We choose a sequence of initial data $(\bfu_n^0)_{n\in\N}\subset(\Hav^1(\Om)+m)\times\LL^\infty(\Om)$ such that
	\begin{equation}\label{EQ:ext_initial_conditions_convergence}
		\norm{\bfu_n^0-\bfu^0}_{\Hav^1(\Om)\times\LL^2(\Om)}\to0\quad\text{as }n\to\infty.
	\end{equation}
	By Theorem \ref{TH:well_posedness}, we find GF solutions $\bfu_n\in\AClocZ$ that satisfy the EDB, in particular
	\[\calE(\bfu_n(T))+\int_0^T\bigparen{\calR(\bfu_n(t);\dot\bfu_n(t))+\calR^*(\bfu_n(t);-\bfmu_n(t))}\dd t=\calE(\bfu_n^0)<\infty\]
	for all $T\in[0,\infty)$, where $\bfmu_n=-\bbG(\bfu_n)\dot\bfu_n$. Combining \eqref{EQ:ext_initial_conditions_convergence} with the bounded sublevels of $\calE$ in $\HH^1(\Om)\times\LL^2(\Om)$ and using coercivity \eqref{EQ:dissipation_norm_equivalences}, we find a constant $C\in(0,\infty)$ such that for every $n\in\N$
	\begin{equation}\label{EQ:ext_a_priori_by_edb}
		\norm{\bfu_n}_{\LL^\infty([0,\infty);\HH^1(\Om)\times\LL^2(\Om))}+\norm{\dot\bfu_n}_{\LL^2([0,\infty);H)}+\norm{\bfmu_n}_{\LL^2([0,\infty);H)}\leq C.
	\end{equation}
	Now put $\tilde{\mu}_n:=-\Delta u_n+f(u_n)-A(u_n)(z_n-K(u_n))$, so that $\mu_n-\tilde{\mu}_n=\fraka(u_n,z_n)$. Observe that
	\[\abs{\fraka(u_n,z_n)}\leq\int_\Om\abs{f(u_n)}\dd x+A^*\int_\Om\abs{z_n-K(u_n)}\dd x,\]
	i.e.\ $\norm{\fraka(u_n,z_n)}_{\LL^\infty([0,\infty))}\leq C$ by growth condition \eqref{EQ:growth_condition_f} and \eqref{EQ:ext_a_priori_by_edb}. Together with \eqref{EQ:W_estimate} this implies the existence of constants $C(T)\in(0,\infty)$ such that
	\begin{equation}\label{EQ:ext_a_priori_mu_and_H2}
		\norm{\tilde{\mu}_n}_{\LL^2(0,T;\HH^1(\Om))}+\norm{u_n}_{\LL^2(0,T;\HH^2(\Om))}\leq C(T).
	\end{equation}
	\begin{subequations}\label{EQ:ext_convergences}
		Using \eqref{EQ:ext_a_priori_by_edb}, we find a limit $\bfu=(u,z)\in\AClocZ\cap\LL^\infty([0,\infty);\HH^1(\Om)\times\LL^2(\Om))$ such that for a non-relabelled subsequence
		\begin{empheq}[left=\empheqlbrace]{align}
			\dot\bfu_n\weakto\dot\bfu&\text{ in }\LL^2([0,\infty);H),\label{EQ:ext_convergence_dotbfu_H1H}\\
			\bfu_n\starto\bfu&\text{ in }\LL^\infty([0,\infty);\HH^1(\Om)\times\LL^2(\Om)).\label{EQ:ext_convergence_bfu_LIH1L2}
		\end{empheq}
		From \eqref{EQ:ext_a_priori_mu_and_H2} we infer $u\in\Lloc^2([0,\infty);\HH^2(\Om))$ and get some $\tilde{\mu}\in\Lloc^2([0,\infty);\HH^1(\Om))$, such that for further subsequence (again, not-relabelled) it holds
		\begin{empheq}[left=\empheqlbrace]{align}
			u_n\weakto u&\text{ in }\LL^2(0,T;\HH^2(\Om))\quad \forall T<\infty,\label{EQ:ext_convergence_u_L2H2}\\
			u_n\to u&\text{ in }\Lloc^2([0,\infty);\HH^1(\Om)),\label{EQ:ext_convergence_u_L2H1}\\
			u_n\to u&\text{ a.e. in }(0,\infty)\times\Om,\label{EQ:ext_convergence_u_ae}\\
			\tilde{\mu}_n\weakto\tilde{\mu}&\text{ in }\LL^2(0,T;\HH^1(\Om)) \quad \forall T<\infty.\label{EQ:ext_convergence_mu_L2H1}
		\end{empheq}
		Note that \eqref{EQ:ext_convergence_u_L2H1} is implied by Aubin--Lions compactness, while \eqref{EQ:ext_convergence_u_ae} and \eqref{EQ:ext_convergence_mu_L2H1} are obtained via a diagonal argument. We observe that \eqref{EQ:ext_convergence_dotbfu_H1H}, \eqref{EQ:ext_convergence_u_ae} and $\abs{\tau}\leq\tau^*$ imply literally as in Lemma~\ref{PRO:closedness_subdifferential_diss_potential}
		\begin{equation}
			\bbG(\bfu_n)\dot\bfu_n\weakto\bbG(\bfu)\dot\bfu\quad\text{in }\LL^2([0,\infty);H).\label{EQ:ext_convergence_Gudotu}
		\end{equation}
	\end{subequations}
	To conclude $-\bbG(\bfu)\dot\bfu=\delta\calE(\bfu)$, we can not use Proposition~\ref{PRO:closedness_subdifferential_energy} because strong convergence $\bfu_n\to\bfu$ in $\Lloc^2([0,\infty);\calH_m)$ is not available. However, since $-\bbG(\bfu_n)\dot\bfu_n=\bfmu_n=\delta\calE(\bfu_n)$ and weak limits are unique, it is sufficient to verify that
	\begin{align}
		&\tilde{\mu}=-\Delta u+f(u)-A(u)(z-K(u))\text{ and}\label{EQ:ext_mu_limit_passage}\\
		&z_n-K(u_n)\weakto z-K(u)\text{ in }\LL^2(0,T;\LL^2(\Om))\quad\forall T<\infty.\label{EQ:ext_xi_limit_passage}
	\end{align}
	The latter one already follows from \eqref{EQ:ext_convergence_bfu_LIH1L2}, \eqref{EQ:ext_convergence_u_L2H1} and \eqref{EQ:ext_convergence_u_ae}. Note that \eqref{EQ:ext_xi_limit_passage} and \eqref{EQ:ext_convergence_u_ae} also provides $A(u_n)(z_n-K(u_n))\weakto A(u)(z-K(u))$ in $\LL^2(0,T;\LL^2(\Om))$ for any $T<\infty$ since $\abs{A}\leq A^*$. Moreover, \eqref{EQ:ext_convergence_u_L2H2} yields $\Delta u_n\weakto\Delta u$ in $\LL^2(0,T;\LL^2(\Om))$. Hence, once we have $f(u_n)\to f(u)$ in $\Lloc^1([0,\infty);\LL^1(\Om))$ established, the identity \eqref{EQ:ext_mu_limit_passage} follows. To this end, fix $T<\infty$ and observe that \eqref{EQ:ext_convergence_u_L2H1} and \eqref{EQ:ext_convergence_u_ae}, together with \eqref{EQ:growth_condition_f} and Sobolev embedding $\HH^1(\Om)\hookrightarrow\LL^p(\Om)$, imply for a subsequence $\norm{f(u_{n_k})-f(u)}_{\LL^1(\Om)}\to0$ almost everywhere in $(0,T)$. Hence, invoking
	\[\norm{f(u_{n_k}(t))-f(u(t))}_{\LL^1(\Om)}\leq c_1\bigparen{2(\Leb^d(\Om))^1+\CSo\bigparen{\norm{u_{n_k}(t)}_{\HH^1(\Om)}^p+\norm{u(t)}_{\HH^1(\Om)}^p}}\]
	and \eqref{EQ:ext_convergence_bfu_LIH1L2}, the dominated convergence theorem yields $f(u_{n_k})\to f(u)$ in $\LL^1(0,T;\LL^1(\Om))$. Since this applies to any subsequence we might have chosen, it follows $f(u_n)\to f(u)$ in $\LL^1(0,T;\LL^1(\Om))$ as desired. Taking into account that $\bfu(0)\leftharpoonup\bfu_n(0)=\bfu_n^0\to\bfu^0$ in $\calH_m$, $\bfu$ eventually qualifies as a gradient-flow solution.\\

	To derive the EDI~\eqref{EQ:edi}, we pass to the limit in the EDB~\eqref{EQ:edb} satisfied by $\bfu_n$. Since $\bfu_n(t)\weakto\bfu(t)$ in $\calH_m$ for all $t$, it holds $\calE(\bfu(t))\leq\liminf_{n\to\infty}\calE(\bfu_n(t))$ by Proposition~\ref{PRO:energy_weakly_lower_semicontinuous}. In the dissipative terms, we apply lower semicontinuity to the weak convergences \eqref{EQ:ext_convergence_mu_L2H1}, \eqref{EQ:ext_xi_limit_passage} and the identification \eqref{EQ:ext_mu_limit_passage}. Eventually, for the passage in the initial energy, we just observe that \eqref{EQ:ext_initial_conditions_convergence} implies $\calE(\bfu_n^0)\to\calE(\bfu^0)$.\\

	The stability estimate~\eqref{EQ:ext_stability_estimate} can be shown literally as in Step~3 of Theorem \ref{TH:limit_passage}, by noting that the argument needs the $\LL^\infty(\Om)$-regularity only for one of the solutions. The very last statement concerning the bound of $\norm{z-K(u)}_{\LL^2(0,T;\LL^\infty(\Om))}$ can be deduced easily from \eqref{EQ:W_estimate}, the ODE formula~\eqref{EQ:z_ODE_representation}, EDI~\eqref{EQ:edi} and estimates of type $\norm{z}_{\LL^2(\Om)}\leq\sqrt{2\calE(\bfu)}+A^*\norm{u}_{\LL^2(\Om)}$, $\norm{u-m}_\Havd\leq\CPW\norm{u-m}_{\LL^2(\Om)}\leq\CPW^2\norm{\nabla u}_{\LL^2(\Om;\R^d)}\leq\CPW^2\sqrt{2\calE(\bfu)}$ (Poincar\'e-Wirtinger).
\end{proof}

\section{Relaxation limit}\label{SEC:relaxation_limit}
Throughout this section, we suppose Assumption~\ref{ASS:relaxation_limit} in addition to our general hypotheses listed in Assumption~\ref{ASS:general}. Before turning to the proof of Theorem~\ref{TH:relaxation_limit}, we establish the $\Gamma$-convergence of the energies and collect further auxiliary properties.

\subsection{\texorpdfstring{$\boldsymbol{\Gamma}$-}{}Convergence of energies, compactness, and closures}

We will frequently use the following $\LL^2$-convergence result, which is an easy consequence of the uniform convergence $A_\ve\to A$ from Assumption~\ref{ASS:relaxation_limit}:

\begin{lemma}\label{LEM:L2_convergence_with_K}
	Let $T<\infty$. If $u_\ve,u\in\CC([0,T];\LL^2(\Om))$ are such that $u_\ve\to u$ in $\CC([0,T];\LL^2(\Om))$, then $K_\ve(u_\ve)\to K(u)$ and $K_\ve^{-1}(u_\ve)\to K^{-1}(u)$ in $\CC([0,T];\LL^2(\Om))$ as well.

    The same conclusion holds if we replace `` $\CC([0,T];\LL^2(\Om))$'' by `` $\LL^2(\Om)$''.
\end{lemma}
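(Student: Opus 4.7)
The plan is a triangle-inequality argument exploiting two structural facts that follow immediately from Assumption~\ref{ASS:relaxation_limit}: each $K_\ve$ is globally $A^*$-Lipschitz and each $K_\ve^{-1}$ is globally $A_*^{-1}$-Lipschitz, with constants uniform in $\ve$; and the pointwise defects $K_\ve(s)-K(s)$ and $K_\ve^{-1}(s)-K^{-1}(s)$ are linear in $|s|$ with prefactor controlled by $\|A_\ve-A\|_{\CC(\R)}$, which vanishes as $\ve\searrow0$.

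For the forward statement I would split
\[
K_\ve(u_\ve(t))-K(u(t))=\bigl(K_\ve(u_\ve(t))-K_\ve(u(t))\bigr)+\bigl(K_\ve(u(t))-K(u(t))\bigr).
\]
The bound $K_\ve'=A_\ve\le A^*$ controls the first summand by $A^*|u_\ve-u|$ pointwise on $\Om$, hence in $\CC([0,T];\LL^2(\Om))$-norm by $A^*\|u_\ve-u\|_{\CC([0,T];\LL^2(\Om))}\to0$. For the second, writing $K_\ve(s)-K(s)=\int_0^s(A_\ve-A)(\sigma)\,d\sigma$ yields $|K_\ve(s)-K(s)|\le\|A_\ve-A\|_{\CC(\R)}|s|$, and integrating over $\Om$ and taking the sup in $t\in[0,T]$ gives $\|A_\ve-A\|_{\CC(\R)}\|u\|_{\CC([0,T];\LL^2(\Om))}$, which is finite (since $u\in\CC([0,T];\LL^2(\Om))$) and tends to zero.

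For the inverse direction, the same split applies, with the Lipschitz estimate for the first summand now stemming from $K_\ve^{-1}$ being $A_*^{-1}$-Lipschitz. The pointwise bound on $K_\ve^{-1}-K^{-1}$ I would derive by setting $x:=K^{-1}(s)$ and using monotonicity of $K_\ve$ together with $K_\ve'\ge A_*$:
\[
A_*\bigl|K_\ve^{-1}(s)-x\bigr|\le\bigl|K_\ve(K_\ve^{-1}(s))-K_\ve(x)\bigr|=|K(x)-K_\ve(x)|\le\|A_\ve-A\|_{\CC(\R)}|x|\le A_*^{-1}\|A_\ve-A\|_{\CC(\R)}|s|,
\]
where the last step uses $|K^{-1}(s)|\le A_*^{-1}|s|$. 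The remainder of the argument is identical to the forward case. The time-independent $\LL^2(\Om)$ variant follows at once from the very same pointwise estimates applied to a single spatial profile. No genuine obstacle arises: the entire statement is driven by the uniform Lipschitz constants $A^*$, $A_*^{-1}$ and the convergence rate $\|A_\ve-A\|_{\CC(\R)}\to0$ supplied by Assumption~\ref{ASS:relaxation_limit}.
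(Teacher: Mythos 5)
Your proposal is correct and complete. The paper itself offers no proof of this lemma, only the remark that it is ``an easy consequence of the uniform convergence $A_\ve\to A$,'' so there is no competing argument to compare against; your triangle-inequality decomposition, paired with the uniform Lipschitz bounds $K_\ve'=A_\ve\in[A_*,A^*]$, $(K_\ve^{-1})'\in[1/A^*,1/A_*]$, and the pointwise defect estimate $|K_\ve(s)-K(s)|\le\|A_\ve-A\|_{\CC(\R)}|s|$, is exactly the natural argument the authors have in mind. The one step worth making fully explicit is the bound $|K^{-1}(s)|\le A_*^{-1}|s|$, which follows from $K(0)=0$ and $K'\ge A_*$ via $|K(x)|\ge A_*|x|$; you invoke it correctly. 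All estimates are uniform in $t$, so the $\CC([0,T];\LL^2(\Om))$ and the time-independent $\LL^2(\Om)$ statements follow at once.
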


\begin{proposition}[$\Gamma$-limit of $\calE_\ve$]\label{PRO:energies_gamma_convergence}
The family $(\calE_\ve)_{\ve\in(0,1)}$ $\Gamma$-converges with respect to $\calH_m$ to the functional
	\[\calE_0:\calH_m\to[0,\infty],\qquad\calE_0(\bfu):=\begin{cases}
		\calECH(u)&\text{if }z=K(u)\text{ and }u\in\Hav^1(\Om)+m,\\
		+\infty&\text{otherwise},
	\end{cases}\]
    i.e.\ the following holds true:
	\begin{enumerate}
		\item If $\bfu_\ve\to\bfu$ in $\calH_m$, then $\calE_0(\bfu)\leq\liminf_{\ve\to0}\calE_\ve(\bfu_\ve)$.
		\item For any $\bfu\in\calH_m$, we find $(\bfu_\ve)_{\ve\in(0,1)}\subset\calH_m$ with $\bfu_\ve\to\bfu$ and $\limsup_{\ve\to0}\calE_\ve(\bfu_\ve)\leq\calE_0(\bfu)$.
	\end{enumerate}
\end{proposition}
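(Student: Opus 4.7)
For the liminf inequality, take $\bfu_\ve = (u_\ve,z_\ve) \to (u,z) = \bfu$ in $\calH_m$ and assume $\liminf_{\ve\to 0} \calE_\ve(\bfu_\ve) < \infty$ (otherwise nothing to prove). Along a subsequence realising the liminf, the bound $\calECH(u_\ve) \le \calE_\ve(\bfu_\ve)$, together with the one-sided control $F(\cdot) \ge -c(1+|\cdot|^2)$ that follows from Assumption~\ref{ASS:general}, the fixed mean value $m$, and Poincar\'e--Wirtinger, delivers a uniform $\HH^1(\Om)$-bound for $u_\ve$. Up to a further (non-relabelled) subsequence, $u_\ve \weakto u$ in $\HH^1(\Om)$ and $u_\ve \to u$ strongly in $\LL^2(\Om)$ and a.e.\ in $\Om$, the limit being identified through the assumed $\Havd$-convergence of $u_\ve$. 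In particular $u \in \Hav^1(\Om)+m$, and arguing as in the proof of Proposition~\ref{PRO:energy_weakly_lower_semicontinuous} yields
\[
    \calECH(u) \le \liminf_{\ve\to 0} \calECH(u_\ve).
\]

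To identify the constraint $z = K(u)$, observe that
\[
    \|z_\ve - K_\ve(u_\ve)\|_{\LL^2(\Om)}^2 \le 2\ve^2\,\calE_\ve(\bfu_\ve) \xrightarrow{\ve \searrow 0} 0.
\]
Combined with $z_\ve \to z$ in $\LL^2(\Om)$ and $K_\ve(u_\ve) \to K(u)$ in $\LL^2(\Om)$ -- the latter being provided by Lemma~\ref{LEM:L2_convergence_with_K} thanks to the uniform convergence $A_\ve \to A$ and the strong $\LL^2$-convergence of $u_\ve$ -- this forces $z = K(u)$. Hence $\bfu \in \dom(\calE_0)$, and
\[
    \calE_0(\bfu) = \calECH(u) \le \liminf_{\ve\to 0} \calECH(u_\ve) \le \liminf_{\ve\to 0} \calE_\ve(\bfu_\ve).
\]

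For the limsup, we may assume $\calE_0(\bfu) < \infty$, so that $u \in \Hav^1(\Om)+m$ and $z = K(u)$. The natural choice $\bfu_\ve := (u, K_\ve(u))$ is a recovery sequence: Lemma~\ref{LEM:L2_convergence_with_K} ensures $\bfu_\ve \to \bfu$ in $\calH_m$, and the penalty term vanishes identically, so that
\[
    \calE_\ve(\bfu_\ve) = \calECH(u) = \calE_0(\bfu) \quad \text{for every } \ve \in (0,1).
\]
The only genuine obstacle is the liminf step of identifying $z = K(u)$ in the limit, which hinges on extracting strong $\LL^2$-compactness for $u_\ve$ from the Ginzburg--Landau part of $\calE_\ve$ and combining it with the uniform convergence $K_\ve \to K$ on bounded sets guaranteed by Assumption~\ref{ASS:relaxation_limit}.
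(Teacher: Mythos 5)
Your proof follows the same route as the paper's: extract a weakly $\HH^1$-convergent subsequence from the energy bound, invoke the lower semicontinuity argument of Proposition~\ref{PRO:energy_weakly_lower_semicontinuous} for the Ginzburg--Landau part, identify the constraint $z=K(u)$ via the bound $\|z_\ve-K_\ve(u_\ve)\|_{\LL^2}\le\sqrt{2}\,\ve\sqrt{\calE_\ve(\bfu_\ve)}$ together with Lemma~\ref{LEM:L2_convergence_with_K}, and use the recovery sequence $\bfu_\ve=(u,K_\ve(u))$ for which the penalty vanishes identically. The only cosmetic difference is that you spell out the one-sided quadratic lower bound on $F$ and Poincar\'e--Wirtinger for the $\HH^1$-coercivity, which the paper leaves implicit.
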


\begin{proof}
	\leavevmode
	\begin{enumerate}
		\item For a suitable subsequence we have $\liminf_{\ve\to0}\calE_\ve(\bfu_\ve)=\lim_{k\to\infty}\calE_\vek(\bfu_\vek)=:E$. We may also assume that $E<\infty$, otherwise there is nothing to do, and $\bfu_\vek\in\HH^1(\Om)\times\LL^2(\Om)$ for all $k$. In particular, $(u_\vek)_{k\in\N}$ is bounded in $\HH^1(\Om)$. Therefore, $u\in\Hav^1(\Om)+m$ and, for a further non-relabelled subsequence, $u_\vek\weakto u$ in $\HH^1(\Om)$, $u_\vek\to u$ in $\LL^2(\Om)$ and $u_\vek(x)\to u(x)$ for almost every $x\in\Om$. Via weak lower semicontinuity of the norm and Fatou's lemma, as in Proposition~\ref{PRO:energy_weakly_lower_semicontinuous}, it follows
		\[\calECH(u)\leq\liminf_{k\to\infty}\calE_\vek(\bfu_\vek)=E.\]
		Since $\norm{z_\ve-K_\ve(u_\ve)}_{\LL^2(\Om)}\leq\ve C$ for some $C\in(0,\infty)$, we have $z_\ve-K_\ve(u_\ve)\to0$ in $\LL^2(\Om)$. Furthermore, $K_\vek(u_\vek)\to K(u)$ in $\LL^2(\Om)$ by Lemma~\ref{LEM:L2_convergence_with_K}, so that $z_\vek\to K(u)$ in $\LL^2(\Om)$, thus $z=K(u)$, i.e.\ $\calECH(u)=\calE_0(\bfu)$.
		\item We may assume $z=K(u)$. Hence, for $\bfu_\ve:=(u,K_\ve(u))$ it holds $\calE_\ve(\bfu_\ve)=\calECH(u)$ and $K_\ve(u)\to K(u)$ in $\LL^2(\Om)$ by Lemma~\ref{LEM:L2_convergence_with_K}, especially $\bfu_\ve\to\bfu$ in $\calH_m$.\qedhere
	\end{enumerate}
\end{proof}

The following compactness property will play a key role in obtaining strong convergence required for the strong--weak closedness of the graph of $\subDG\calE_\ve^\alpha$.

\begin{lemma}\label{LEM:energies_relative_compact}
	For every sequence $(\bfu_\vek)_{k\in\N}\subset\calH_m$, $\vek\searrow0$ fulfilling $\sup_{k\in\N}\calE_\vek(\bfu_\vek)<\infty$, we find a subsequence $(\bfu_{\vek_n})_{n\in\N}\subseteq(\bfu_\vek)_{k\in\N}$ with $\bfu_{\vek_n}\to\bfu_0$ in $\calH_m$ for some $\bfu_0\in\calH_m$.
\end{lemma}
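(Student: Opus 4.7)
The plan is a standard compactness argument in two components, leveraging the $\Gamma$-convergence structure already identified: the $\HH^1$-piece of the energy gives compactness for $u_{\ve_k}$, and the penalty term $\frac{1}{2\ve^2}\|z - K_\ve(u)\|_{\LL^2}^2$ forces $z_{\ve_k}$ to follow $K(u_0)$ in the limit.

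First, setting $E:=\sup_k \calE_{\ve_k}(\bfu_{\ve_k})<\infty$, I would read off two a priori estimates from the definition~\eqref{EQ:energy_epsilon}: the sequence $(u_{\ve_k})_k$ is bounded in $\Hav^1(\Om)+m$ (using $\int \frac12|\nabla u_{\ve_k}|^2\,dx \le E$, non-negativity of $F$ in the form of the decomposition~\eqref{EQ:decomposition_F} controlled by $\beta\|u_{\ve_k}\|_{\LL^2}^2$ and Poincar\'e--Wirtinger, or more directly from $\inf F > -\infty$ modulo a fixed constant), and, crucially,
\begin{equation*}
    \|z_{\ve_k} - K_{\ve_k}(u_{\ve_k})\|_{\LL^2(\Om)} \le \ve_k\sqrt{2E} \xrightarrow{k\to\infty} 0.
\end{equation*}
By reflexivity of $\HH^1(\Om)$ and Rellich--Kondrachov, I extract a subsequence (not relabelled) with $u_{\ve_{k_n}} \rightharpoonup u_0$ in $\HH^1(\Om)$, $u_{\ve_{k_n}} \to u_0$ in $\LL^2(\Om)$, and a.e. in $\Om$; since the mean-value $m$ is preserved, $u_0 \in \Hav^1(\Om)+m$.

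Next, I would verify $K_{\ve_{k_n}}(u_{\ve_{k_n}}) \to K(u_0)$ in $\LL^2(\Om)$. This is a variant of Lemma~\ref{LEM:L2_convergence_with_K}: write
\begin{equation*}
    K_{\ve_{k_n}}(u_{\ve_{k_n}}) - K(u_0) = \bigl(K_{\ve_{k_n}}(u_{\ve_{k_n}}) - K(u_{\ve_{k_n}})\bigr) + \bigl(K(u_{\ve_{k_n}}) - K(u_0)\bigr).
\end{equation*}
The first bracket is controlled pointwise by $\|A_{\ve_{k_n}}-A\|_{\CC(\R)}|u_{\ve_{k_n}}|$, which tends to $0$ in $\LL^2(\Om)$ thanks to Assumption~\ref{ASS:relaxation_limit} and the $\LL^2$-boundedness of $(u_{\ve_{k_n}})$; the second vanishes by Lipschitz continuity of $K$ combined with the strong $\LL^2$-convergence of $u_{\ve_{k_n}}$. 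Combining with the penalty estimate above,
\begin{equation*}
    z_{\ve_{k_n}} = \bigl(z_{\ve_{k_n}} - K_{\ve_{k_n}}(u_{\ve_{k_n}})\bigr) + K_{\ve_{k_n}}(u_{\ve_{k_n}}) \xrightarrow{n\to\infty} K(u_0)\quad\text{in }\LL^2(\Om).
\end{equation*}

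Finally, I would set $\bfu_0 := (u_0, K(u_0))\in\calH_m$ and promote the $\LL^2$-convergence of $u_{\ve_{k_n}}$ to convergence in $\Havd$ via the continuous embedding $\LL^2(\Om)\hookrightarrow\Havd$ (this is where the choice of the weaker dual norm in $\calH_m$ pays off). This yields $\bfu_{\ve_{k_n}}\to\bfu_0$ in $\calH_m$. There is no genuine obstacle here; the only point requiring a touch of care is to keep track of the $\Hdav\cong\Havd$ identification~\eqref{EQ:dual_average_isoiso} so that the limit is indeed an element of the affine space $\calH_m$ with the correct mean value.
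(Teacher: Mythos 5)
Your proposal is correct and follows essentially the same route as the paper: extract the uniform $\HH^1$-bound on $u_{\ve_k}$ and the $O(\ve_k)$ control on $z_{\ve_k}-K_{\ve_k}(u_{\ve_k})$ from the energy bound, apply Rellich--Kondrachov, pass $K_{\ve_k}(u_{\ve_k})\to K(u_0)$ in $\LL^2(\Om)$ (the paper simply cites Lemma~\ref{LEM:L2_convergence_with_K} here, whereas you re-derive it via the split $K_{\ve_k}(u_{\ve_k})-K(u_{\ve_k})$ plus $K(u_{\ve_k})-K(u_0)$), and conclude via the $\LL^2(\Om)\hookrightarrow\Havd$ embedding. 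No substantive difference.
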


\begin{proof}
	The uniform bound of $\calE_\vek(\bfu_\vek)$ implies, on the one hand, $z_\vek-K_\vek(u_\vek)\to0$ in $\LL^2(\Om)$ as $k\to\infty$, and on the other hand it provides a subsequence $(\bfu_{\vek_n})_{n\in\N}\subseteq(\bfu_\vek)_{k\in\N}$ and some $u_0\in\Hav^1(\Om)+m$ such that $u_{\vek_n}\weakto u_0$ in $\HH^1(\Om)$ as $n\to\infty$. By Rellich-Kondrachov we infer $u_{\vek_n}\to u_0$ in $\LL^2(\Om)$, and whence $K_{\vek_n}(u_{\vek_n})\to K(u_0)$ in $\LL^2(\Om)$ by Lemma \ref{LEM:L2_convergence_with_K}, hence $z_{\vek_n}\to K(u_0)$ in $\LL^2(\Om)$ as well. Altogether we have, as $n\to\infty$,
	\[\norm{\bfu_{\vek_n}-(u_0,K(u_0))}_H^2=\norm{u_{\vek_n}-u_0}_\Havd^2+\norm{z_{\vek_n}-K(u_0)}_{\LL^2(\Om)}^2\to0.\qedhere\]
\end{proof}

\begin{lemma}[Strong--weak closure of $\subDG\calE_\ve$]\label{LEM:limit_subdifferential}
	Let $\bfu_\vek=(u_\vek,z_\vek),\bfu=(u,z)\in\calH_m$, $\bfmu_\vek,\bfmu=(\mu,\xi)\in H^*$. Assume that $\bfmu_\vek\in\subDG\calE_\vek(\bfu_\vek)$ for all $k\in\N$, $\bfu_\vek\to\bfu$ in $\calH_m$ and $\bfmu_\vek\weakto\bfmu$ in $H^*$. Then $\bfmu\in\subDL\calE_0(\bfu)$, where the limit subdifferential is defined as
	\[\subDL\calE_0(\bfu):=\Set{\begin{pmatrix}
			-\Delta u+f(u)-A(u)v-\tilde{\fraka}(u,v)\\
			v
	\end{pmatrix}}{\begin{array}{c}
			v\in\LL^2(\Om),\\
			-\Delta u+f(u)-A(u)v\in\HH^1(\Om)
		\end{array}
	}\]
	for $\bfu=(u,K(u))$ with $u\in\Hav^2(\Om)+m$ such that $\nabla u\cdot\n=0$ on $\boundary{\Om}$. Here, we used $\tilde{\fraka}(u,v):=\avint{_\Om}(f(u)-A(u)v)\dd x$.

	In particular, if $\mu=0$, then $A(u)\xi=-\Delta u+f(u)-\avint{_\Om}(f(u)-A(u)\xi)\dd x$. And if $\xi=0$, then $\mu=-\Delta u+f(u)-\avint{_\Om}f(u)\dd x$.
\end{lemma}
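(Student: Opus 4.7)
The plan is to exploit the explicit form of $\subDG\calE_\vek$ supplied by Proposition~\ref{PRO:subdifferential} (applied to $\calE_\vek$) and pass to the limit in each summand. Writing $\bfmu_\vek=(\mu_\vek,\xi_\vek)$ and setting $v_\vek:=\xi_\vek=\vek^{-2}(z_\vek-K_\vek(u_\vek))$, one has
\[
\mu_\vek=-\Delta u_\vek+f(u_\vek)-A_\vek(u_\vek)\,v_\vek-\fraka_\vek,\qquad \fraka_\vek=\avint{_\Om}\bigparen{f(u_\vek)-A_\vek(u_\vek)v_\vek}\dd x.
\]
The weak convergence $\bfmu_\vek\weakto\bfmu$ in $H^*$ provides a uniform $\LL^2$-bound for $v_\vek$, hence $z_\vek-K_\vek(u_\vek)=\vek^2 v_\vek\to 0$ in $\LL^2(\Om)$. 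Combined with $z_\vek\to z$, this forces $K_\vek(u_\vek)\to z$ in $\LL^2(\Om)$, and applying Lemma~\ref{LEM:L2_convergence_with_K} to the uniformly Lipschitz (constant $1/A_*$) inverses $K_\vek^{-1}$ yields $u_\vek=K_\vek^{-1}(K_\vek(u_\vek))\to K^{-1}(z)$ in $\LL^2(\Om)$. By uniqueness of limits, $u=K^{-1}(z)$, i.e.\ $z=K(u)$, recovering the first part of the constraint in the definition of $\subDL\calE_0(\bfu)$.

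The next step is to upgrade $u_\vek$ to a uniform $\HH^2(\Om)$-bound. Testing the first component of the subdifferential identity against $u_\vek-m$ eliminates $\fraka_\vek$ by zero mean, and the monotonicity $\int_\Om(h'(u_\vek)-h'(m))(u_\vek-m)\dd x\geq 0$ (convexity of $h$) together with the $\LL^2$-bound on $u_\vek-m$ from Step~1 delivers a uniform $\HH^1$-bound for $u_\vek$, whence $\fraka_\vek$ itself is bounded via the $p$-growth of $f$ and $\HH^1(\Om)\hookrightarrow\LL^{p+1}(\Om)$. Then $u_\vek$ weakly solves
\[
-\Delta u_\vek+h'(u_\vek)=\beta u_\vek+\mu_\vek+A_\vek(u_\vek)v_\vek+\fraka_\vek\quad\text{in }\Om,\qquad\nabla u_\vek\cdot\n=0\text{ on }\boundary{\Om},
\]
with right-hand side bounded uniformly in $\LL^2(\Om)$, so the Stampacchia argument from the proof of Proposition~\ref{PRO:subdifferential}(i) applies uniformly in $k$ and yields a uniform $\LL^\infty(\Om)$-bound for $u_\vek$. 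Consequently $-\Delta u_\vek$ is uniformly bounded in $\LL^2(\Om)$, and by elliptic regularity $u_\vek$ is bounded in $\HH^2(\Om)$ with the Neumann condition; along a further subsequence, $u_\vek\weakto u$ in $\HH^2(\Om)$ (Neumann preserved via trace continuity), and the compact embedding $\HH^2(\Om)\hookrightarrow\CC(\varcl{\Om})$ (available for $d\leq3$) gives $u_\vek\to u$ in $\LL^\infty(\Om)$.

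With these convergences in hand, passing to the limit in the identity for $\mu_\vek$ is routine: $-\Delta u_\vek\weakto -\Delta u$ in $\LL^2(\Om)$; continuity of $f$ on a compact range plus $u_\vek\to u$ in $\LL^\infty(\Om)$ gives $f(u_\vek)\to f(u)$ in $\LL^\infty(\Om)$; the uniform convergences $A_\vek\to A$ and $u_\vek\to u$ in $\LL^\infty$ imply $A_\vek(u_\vek)\to A(u)$ in $\LL^\infty(\Om)$, which together with $v_\vek\weakto\xi$ in $\LL^2(\Om)$ produces $A_\vek(u_\vek)v_\vek\weakto A(u)\xi$ in $\LL^2(\Om)$; integrating against $1$ yields $\fraka_\vek\to\tilde{\fraka}(u,\xi)$. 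Matching limits, one obtains
\[
\mu=-\Delta u+f(u)-A(u)\xi-\tilde{\fraka}(u,\xi),
\]
and since $\mu\in\Hav^1(\Om)$, the combination $-\Delta u+f(u)-A(u)\xi=\mu+\tilde{\fraka}(u,\xi)$ belongs to $\HH^1(\Om)$. Hence $\bfmu\in\subDL\calE_0(\bfu)$ with $v=\xi$, and the two special cases $\mu=0$ and $\xi=0$ follow by inserting into the derived identity. The principal technical obstacle is the uniform $\HH^2$-bound for $u_\vek$: the a priori information from $\bfmu_\vek\weakto\bfmu$ alone is too weak to pass strongly through the nonlinearities $A_\vek(u_\vek)v_\vek$ and $f(u_\vek)$, and in the critical three-dimensional case $p=5$ the naive Sobolev embedding of $f(u_\vek)$ does not reach $\LL^2(\Om)$; the Stampacchia bootstrap, performed uniformly in $\vek$, is the key device that unlocks the rest of the argument.
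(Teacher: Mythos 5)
Your proof is correct, but it takes a genuinely different (and more laborious) route for the uniform $\HH^2(\Om)$-bound on $u_\vek$. The paper invokes Lemma~\ref{LEM:W_epsilon_estimate} directly, which is proved by multiplying the first component of the subgradient by $-\Delta u$ and integrating by parts, using $f'\geq-\beta$: that single energy estimate yields $\norm{u-m}_{\HH^2(\Om)}\leq C(\norm{(u-m,z)}_H+\norm{\bfmu}_{H^*})$ uniformly in $\ve$ without any reference to the structure of $f$ beyond the semiconvexity constant. You instead redo the Stampacchia bootstrap from the proof of Proposition~\ref{PRO:subdifferential}(i) uniformly in $\vek$: test against $u_\vek-m$ to get a uniform $\HH^1$-bound, bound $\fraka_\vek$, then go through $\LL^\infty$ and elliptic regularity. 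This is correct, but the $\LL^\infty$-step is not needed; the concern you raise about $f(u_\vek)$ missing $\LL^2(\Om)$ when $p=5$ in 3D is sidestepped in the paper by never multiplying $f(u)$ against $\Delta u$ directly -- instead $\nabla f(u)=f'(u)\nabla u$ is used after integration by parts, so only $f'\geq-\beta$ enters. A second, minor difference: you establish $z=K(u)$ \emph{before} the $\HH^2$-bound, using the Lipschitz inverse $K_\vek^{-1}$ applied to $K_\vek(u_\vek)\to z$; the paper derives the $\HH^2$-bound first, extracts $u_\ve\to u$ in $\LL^p(\Om)$ via Rellich--Kondrachov, and then concludes $z=K(u)$. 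Both orderings are valid. Your strong $\LL^\infty$-convergence of $u_\vek$ is stronger than what the paper uses ($\LL^p$-convergence plus $p$-growth suffices to pass $f(u_\vek)\to f(u)$ in $\LL^1(\Om)$), but it costs the extra Stampacchia work. Everything else -- the identification of the two components of $\bfmu$, the weak $\LL^2$-limit of $A_\vek(u_\vek)v_\vek$, the convergence of $\fraka_\vek$, the preservation of the Neumann condition under weak $\HH^2$-limits, and the two special cases $\mu=0$ and $\xi=0$ -- matches the paper.
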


\begin{proof}
	To simplify notation, we write $\ve$ instead of $\vek$. By assumption, we have $\bfmu_\ve\weakto(\mu,\xi)$ in $\Hav^1(\Om)\times\LL^2(\Om)$, which means that
	\[-\Delta u_\ve+f(u_\ve)-\frac{1}{\ve^2}A_\ve(u_\ve)(z_\ve-K_\ve(u_\ve))-\fraka(u_\ve,z_\ve)\weakto\mu\]
	in $\HH^1(\Om)$ and $\frac{1}{\ve^2}(z_\ve-K_\ve(u_\ve))\weakto\xi$ in $\LL^2(\Om)$. The latter particularly implies $z_\ve-K_\ve(u_\ve)\to0$ in $\LL^2(\Om)$. Furthermore, Lemma~\ref{LEM:W_epsilon_estimate} tells us that $(u_\ve)_\ve$ is uniformly bounded in $\HH^2(\Om)$. We conclude $u\in\HH^2(\Om)$ and $u_\ve\weakto u$ in $\HH^2(\Om)$. In particular, since $u_\ve\in\Hav^2(\Om)+m$ and $\nabla u_\ve\cdot\n=0$ on $\boundary{\Om}$, it follows $u\in\Hav^2(\Om)+m$ and $\nabla u\cdot\n=0$ as well. Moreover, for non-relabelled subsequence, Rellich-Kondrachov postulates $u_\ve\to u$ in $\LL^p(\Om)$ and almost everywhere in $\Om$. Hence, since $z_\ve-K_\ve(u_\ve)\to0$ in $\LL^2(\Om)$, Lemma~\ref{LEM:L2_convergence_with_K} yields $z=K(u)$, i.e.\ $\bfu=(u,K(u))$.

	Eventually, it holds $\Delta u_\ve\weakto\Delta u$ in $\LL^2(\Om)$, $f(u_\ve)\to f(u)$ in $\LL^1(\Om)$ by the $p$-growth \eqref{EQ:growth_condition_f} of $f$, $\frac{1}{\ve^2}A_\ve(u_\ve)(z_\ve-K_\ve(u_\ve))\weakto A(u)\xi$ in $\LL^2(\Om)$, because $A_\ve$ converges uniformly to $A$, and
	\[\fraka(u_\ve,z_\ve)=\int_\Om\Bigparen{f(u_\ve)-\frac{1}{\ve^2}A_\ve(u_\ve)(z_\ve-K_\ve(u_\ve))}\dd x\to\int_\Om(f(u)-A(u)\xi)=\tilde{\fraka}(u,\xi),\]
	so that $\mu=-\Delta u+f(u)-A(u)\xi-\tilde{\fraka}(u,\xi)$ is identified by uniqueness of weak limits. This shows
	\[\bfmu=\begin{pmatrix}
		-\Delta u+f(u)-A(u)\xi-\tilde{\fraka}(u,\xi)\\
		\xi
	\end{pmatrix}\in\subDL\calE_0(\bfu)\]
	as desired. The special cases $\mu=0$ resp.\ $\xi=0$ are now readily deduced.
\end{proof}

Combined with Lemma~\ref{LEM:abstract_subdifferential_closedness}, we infer:

\begin{corollary}[Evolutionary closure of $\subDG\calE_\vek$]\label{COR:evolutionary_epsilon_limit_subdifferential}
	Let $T<\infty$. If $\bfu,\bfu_\vek\in\LL^2(0,T;\calH_m),\bfmu,\bfmu_\vek\in\LL^2(0,T;H^*)$ are such that
	\begin{enumerate}[(1)]
		\item $\bfu_\vek\to\bfu$ in $\LL^2(0,T;\calH_m)$ as $k\to\infty$,
		\item $\bfmu_\vek\weakto\bfmu$ in $\LL^2(0,T;H^*)$ as $k\to\infty$ and
		\item $\bfmu_\vek(t)\in\subDG\calE_\vek(\bfu_\vek(t))$ for almost all $t\in(0,T)$, all $k\in\N$,
	\end{enumerate}
	then $\bfmu(t)\in\subDL\calE_0(\bfu(t))$ for almost all $t\in(0,T)$.
\end{corollary}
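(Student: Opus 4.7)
The plan is to pass to the limit directly in the concrete PDE representation of $\bfmu_\vek=\delta\calE_\vek(\bfu_\vek)$, bypassing the full abstract closure machinery. By the $\vek$-analogue of Proposition~\ref{PRO:subdifferential}, each $\bfmu_\vek(t)=(\mu_\vek(t),\xi_\vek(t))$ has the explicit form
\begin{equation*}
\mu_\vek=-\Delta u_\vek+f(u_\vek)-A_\vek(u_\vek)\xi_\vek-\fraka_\vek(u_\vek,z_\vek),\qquad \xi_\vek=\vek^{-2}(z_\vek-K_\vek(u_\vek)).
\end{equation*}
Hence the task reduces to (i) identifying in the limit $z=K(u)$ with $u\in\LL^2(0,T;\Hav^2(\Om)+m)$ satisfying $\nabla u\cdot\n=0$, and (ii) recovering the pointwise identity $\mu=-\Delta u+f(u)-A(u)\xi-\tilde{\fraka}(u,\xi)$ together with the regularity $-\Delta u+f(u)-A(u)\xi\in\HH^1(\Om)$.

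The crucial preparatory step is to upgrade the convergence of $u_\vek$. Applied pointwise in $t$, the $\vek$-analogue Lemma~\ref{LEM:W_epsilon_estimate} of Lemma~\ref{LEM:W_estimate} together with $\bfmu_\vek(t)\in\subDG\calE_\vek(\bfu_\vek(t))$ yields a uniform bound on $(u_\vek)$ in $\LL^2(0,T;\HH^2(\Om))$. Interpolating with the given $\LL^2(0,T;\Havd)$-convergence via~\eqref{EQ:H1_interpolation} upgrades the latter to strong convergence $u_\vek\to u$ in $\LL^2(0,T;\HH^1(\Om))$, hence also in $\LL^2(0,T;\LL^p(\Om))$ for the growth exponent $p$ of~\eqref{EQ:growth_condition_f}; simultaneously, $u_\vek\weakto u$ in $\LL^2(0,T;\HH^2(\Om))$, which transfers the Neumann boundary condition and the mean-value constraint to the limit. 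Extracting a non-relabelled subsequence gives $u_\vek(t)\to u(t)$ in $\HH^1(\Om)$ and a.e.\ in $\Om$ for a.e.\ $t$. Lemma~\ref{LEM:L2_convergence_with_K} then delivers $K_\vek(u_\vek(t))\to K(u(t))$ in $\LL^2(\Om)$; combined with the given strong convergence of $z_\vek$ in $\LL^2(0,T;\LL^2(\Om))$ and with $z_\vek-K_\vek(u_\vek)=\vek^2\xi_\vek\to0$ in $\LL^2(0,T;\LL^2(\Om))$ (which uses only the $\LL^2$-boundedness of $\xi_\vek$), this forces $z=K(u)$.

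For the identification of $\mu$, I would pass to the limit weakly in $\LL^2(0,T;\LL^2(\Om))$ term by term in the formula for $\mu_\vek$: $-\Delta u_\vek\weakto-\Delta u$ from the $\HH^2$-bound; $\xi_\vek\weakto\xi$ by hypothesis; $A_\vek(u_\vek)\to A(u)$ strongly in $\LL^r((0,T)\times\Om)$ for every finite $r$ thanks to~\eqref{EQ:bulk_relaxation_convergence}, a.e.\ convergence of $u_\vek$, and dominated convergence, so that $A_\vek(u_\vek)\xi_\vek\weakto A(u)\xi$; $f(u_\vek)\to f(u)$ strongly in $\LL^2(0,T;\LL^1(\Om))$ via~\eqref{EQ:growth_condition_f}; consequently, $\fraka_\vek(u_\vek,z_\vek)\weakto\tilde{\fraka}(u,\xi)$ in $\LL^2(0,T)$. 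Since by hypothesis also $\mu_\vek\weakto\mu$ in $\LL^2(0,T;\Hav^1(\Om))\hookrightarrow\LL^2(0,T;\LL^2(\Om))$, uniqueness of weak limits yields the desired identity for $\mu$, while the regularity $-\Delta u+f(u)-A(u)\xi=\mu+\tilde{\fraka}(u,\xi)\in\HH^1(\Om)$ is automatic from $\mu\in\Hav^1(\Om)$. Altogether $\bfmu(t)\in\subDL\calE_0(\bfu(t))$ for a.e.~$t$. The main obstacle lies precisely in this intermediate strengthening of convergence: weak-in-time convergence of $\bfmu_\vek$ together with merely $\calH_m$-strong convergence of $\bfu_\vek$ has to be pushed through the nonlinear products $A_\vek(u_\vek)\xi_\vek$ and $f(u_\vek)$, which is possible only by exploiting the extra $\HH^2$-regularity that the subdifferential inclusion forces on $u_\vek$.
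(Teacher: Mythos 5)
Your overall roadmap (upgrade $u_\vek$ to $\LL^2(0,T;\HH^2)$ via Lemma~\ref{LEM:W_epsilon_estimate}, interpolate to get strong $\LL^2(0,T;\HH^1)$-convergence, then pass to the limit term by term in the explicit formula for $\delta\calE_\vek$) is a natural ``PDE-style'' alternative to the paper's abstract route, but it has a genuine gap at the $f(u_\vek)$ term when the growth exponent in~\eqref{EQ:growth_condition_f} satisfies $p>2$.

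Concretely, you assert $f(u_\vek)\to f(u)$ strongly in $\LL^2(0,T;\LL^1(\Om))$, and more generally you need some uniform $\LL^q((0,T)\times\Om)$-bound ($q\ge1$) on $f(u_\vek)$ to make a weak limit passage meaningful. The bounds at your disposal are $u_\vek$ bounded in $\LL^2(0,T;\HH^2(\Om))$ and $u_\vek\to u$ in $\LL^2(0,T;\HH^1(\Om))$; these yield at best $u_\vek$ bounded in $\LL^2(0,T;\LL^\infty(\Om))$ and convergent in $\LL^2(0,T;\LL^p(\Om))$. Hence $\abs{u_\vek}^p$ is controlled only in $\LL^{2/p}(0,T;\LL^\infty(\Om))$, i.e.\ in a quasi-Banach space when $p>2$ (in $3$D, $p$ may be as large as $5$), which does \emph{not} embed into $\LL^1((0,T)\times\Om)$. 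Your own equation is of no help either: solving for $f(u_\vek)$ from $\mu_\vek=-\Delta u_\vek+f(u_\vek)-A_\vek(u_\vek)\xi_\vek-\fraka_\vek$ only controls the zero-mean part $f(u_\vek)-\avint{_\Om}f(u_\vek)\dd x$, not $f(u_\vek)$ itself. Without equi-integrability, a.e.\ convergence of $f(u_\vek)$ cannot be upgraded to any form of $\LL^1$- or weak-$\LL^2$-convergence, so the identification of $\mu$ breaks down.

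This is precisely the obstacle the paper's proof is designed to circumvent. Lemma~\ref{LEM:abstract_subdifferential_closedness} (Rossi--Savar\'e parametrised Young measures) reduces the evolutionary closedness to the \emph{static} one in Lemma~\ref{LEM:limit_subdifferential}: for a.e.\ $t$, along a $t$-dependent subsequence one has $\bfu_\vek(t)\to\bfu(t)$ in $H$ and $\bfmu_\vek(t)\weakto\bfmu$ in $H^*$, so the $\HH^2(\Om)$-bound and Rellich--Kondrachov give $u_\vek(t)\to u(t)$ in $\LL^p(\Om)$ at that fixed $t$, whence $f(u_\vek(t))\to f(u(t))$ in $\LL^1(\Om)$ with no time integrability of $\norm{f(u_\vek(\cdot))}_{\LL^q}$ ever being required. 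What this abstract route costs is the need to verify that $\subDL\calE_0(\bfu)$ is closed and convex in $H^*$ (so that barycenters of the Young measures stay inside it), which holds because the first component of $\subDL\calE_0(\bfu)$ is an affine function of the free parameter $v$. So either repair your argument by invoking the Young-measure lemma as in the paper, or, if you wish to stay with a direct computation, restrict to $p\le 2$ (1D, or weaker growth), where $\abs{u_\vek}^p$ is bounded in $\LL^1((0,T)\times\Om)$ and the term-by-term passage indeed closes.
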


\begin{proof}
	Applying Lemma~\ref{LEM:abstract_subdifferential_closedness} to $S_k:=\subDG\calE_\vek$ and noticing that $\subDL\calE_0$ is closed and convex in $H^*$, the claim follows together with Lemma~\ref{LEM:limit_subdifferential}.
\end{proof}

\begin{lemma}\label{LEM:W_epsilon_estimate}
	There exists a constant $C=C(\Om,d,A^*,\tau^*,\beta)\in(0,\infty)$ such that
	\begin{equation}\label{EQ:W_epsilon_estimate}
		\norm{u-m}_{\HH^2(\Om)}\leq C\bigparen{\norm{(u-m,z)}_H+\norm{\bfmu}_{H^*}}
	\end{equation}
	for all $\bfu=(u,z)\in\dom(\subDG\calE_\ve)$, $\bfmu\in\subDG\calE_\ve(\bfu)$, all $\ve\in(0,1)$.
\end{lemma}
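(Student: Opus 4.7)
The estimate is the $\ve$-version of Lemma~\ref{LEM:W_estimate}, and the plan is to mimic that argument almost verbatim, provided one first identifies $\subDG\calE_\ve$ explicitly. Repeating the computation that established Proposition~\ref{PRO:subdifferential} for the rescaled energy~\eqref{EQ:energy_epsilon} yields that $\bfmu=(\mu,\xi)\in\subDG\calE_\ve(\bfu)$ if and only if $u\in\Hav^2(\Om)+m$ with $\nabla u\cdot\n=0$ on $\boundary{\Om}$, together with
\[
    \xi=\frac{1}{\ve^2}(z-K_\ve(u))\in\LL^2(\Om),\qquad \mu=-\Delta u+f(u)-A_\ve(u)\xi-\fraka_\ve(u,z)\in\Hav^1(\Om),
\]
where $\fraka_\ve(u,z):=\avint{_\Om}(f(u)-A_\ve(u)\xi)\dd x$. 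In particular,
\[
    \norm{\bfmu}_{H^*}^2=\int_\Om\bigabs{\nabla(-\Delta u+f(u)-A_\ve(u)\xi)}^2\dd x+\int_\Om\xi^2\dd x,
\]
so both $\nabla\mu$ and $\xi$ are directly controlled by $\norm{\bfmu}_{H^*}$ independently of~$\ve$.

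The key observation is that, contrary to what the factor $\ve^{-2}$ might suggest, the singular coefficient never appears in isolation: it is always packaged inside the single quantity $\xi$, which lives in $\LL^2(\Om)$ with $\norm{\xi}_{\LL^2(\Om)}\le\norm{\bfmu}_{H^*}$. Hence, setting $w:=-\Delta u+f(u)-A_\ve(u)\xi$, I would test $w$ against $-\Delta u$, integrate by parts (using $\nabla u\cdot\n=0$ and $f\in\CC^1$, so that $\nabla(f(u))=f'(u)\nabla u$), and obtain
\[
    \int_\Om\abs{\Delta u}^2\dd x=\int_\Om\nabla\mu\cdot\nabla u\dd x-\int_\Om f'(u)\abs{\nabla u}^2\dd x+\int_\Om A_\ve(u)\xi\,(-\Delta u)\dd x.
\]
Using $f'\ge-\beta$, the bound $\abs{A_\ve}\le A^*$ (Assumption~\ref{ASS:relaxation_limit}), and Cauchy--Schwarz, this yields
\[
    \norm{\Delta u}_{\LL^2(\Om)}^2\leq\norm{\nabla\mu}_{\LL^2(\Om;\R^d)}\norm{u-m}_{\Hav^1(\Om)}+\beta\norm{u-m}_{\Hav^1(\Om)}^2+A^*\norm{\xi}_{\LL^2(\Om)}\norm{\Delta u}_{\LL^2(\Om)},
\]
exactly as in the proof of Lemma~\ref{LEM:W_estimate}.

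From this point on the argument is identical: I would apply Young's inequality, then combine the interpolation \eqref{EQ:H1_interpolation} with Lemma~\ref{LEM:H2_Delta_estimate} to get $\norm{u-m}_{\Hav^1(\Om)}^2\le C\norm{\Delta u}_{\LL^2(\Om)}^{4/3}\norm{u-m}_\Havd^{2/3}$, and absorb the resulting top-order term by a further Young's inequality with exponents $(3/2,3)$. Together with $\norm{u-m}_\Havd\leq\norm{(u-m,z)}_H$ and Lemma~\ref{LEM:H2_Delta_estimate}, this yields~\eqref{EQ:W_epsilon_estimate} with a constant depending only on $\Om,d,A^*,\tau^*,\beta$ (the dependence on $\tau^*$ in fact drops out, since $\bbG_\ve$ does not enter here). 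There is no essential obstacle; the only point to check is that the dependence on $\ve$ is harmless, which is precisely the content of the observation that $\xi\in\LL^2(\Om)$ is the appropriate quantity to carry the $\ve^{-2}$ factor.
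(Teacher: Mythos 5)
Your proposal is correct and follows essentially the same route as the paper: both identify $\xi=\ve^{-2}(z-K_\ve(u))$ as the natural $\ve$-independent quantity controlled by $\norm{\bfmu}_{H^*}$, test $-\Delta u + f(u) - A_\ve(u)\xi$ against $-\Delta u$, integrate by parts, use $f'\ge -\beta$ and $\abs{A_\ve}\le A^*$, and then close via the interpolation from Lemma~\ref{LEM:H2_Delta_estimate} and \eqref{EQ:H1_interpolation} and a final Young step with exponents $(3/2,3)$. Your remark that $\tau^*$ does not actually enter the estimate is accurate (it also did not appear in the constant of Lemma~\ref{LEM:W_estimate}); the paper's statement simply lists a redundant dependence.
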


\begin{proof}
	As the proof closely parallels that of Lemma~\ref{LEM:W_estimate}, we omit the detailed computations. Let $\bfu=(u,z)\in\dom(\subDG\calE_\ve)$, $\bfmu\in\subDG\calE_\ve(\bfu)$, and multiplying $-\Delta u+f(u)-\frac{1}{\ve^2}A_\ve(u)(z-K_\ve(u))$ with $-\Delta u$, by-part integration and Young's inequality yields as in Lemma~\ref{LEM:W_estimate}
	\begin{align*}
		\norm{\Delta u}_{\LL^2(\Om)}^2&\leq\Bignorm{\nabla\bigparen{-\Delta u+f(u)-\frac{1}{\ve^2}A_\ve(u)(z-K_\ve(u))}}_{\LL^2(\Om;\R^d)}^2\\
		&\quad+(1+2\beta)\norm{u-m}_{\Hav^1(\Om)}^2+(A^*)^2\frac{1}{\ve^4}\norm{z-K_\ve(u)}_{\LL^2(\Om)}^2.
	\end{align*}
	Since $\bfmu=(-\Delta u+f(u)-\frac{1}{\ve^2}A_\ve(u)(z-K_\ve(u)),\frac{1}{\ve^2}(z-K_\ve(u)))$, we easily deduce
	\[\norm{\Delta u}_{\LL^2(\Om)}^2\leq\max\{1,(A^*)^2\}\norm{\bfmu}_{H^*}^2+(1+2\beta)\norm{u-m}_{\Hav^1(\Om)}^2\]
	and conclude as in Lemma~\ref{LEM:W_estimate} to get a constant $C$ independent of $\ve$ as claimed.
\end{proof}

\subsection{Proof of Theorem~\ref{TH:relaxation_limit}}
Let us first outline the proof. Exploiting \eqref{EQ:edi_general_definition}, we first show that $\bfmu_\ve:=-\D_{\bfv}\calR_\ve^{(\gamma,\kappa)}(\bfu_\ve;\dot{\bfu}_\ve)$ and $\dot\bfu_\ve$ are uniformly bounded, and that $(\bfu_\ve)_\ve$ is equicontinuous. In Step~2, we capitalise on these bounds and apply Lemma~\ref{LEM:energies_relative_compact} to deduce \eqref{EQ:relaxation_u_convergence}, \eqref{EQ:relaxation_z_convergence}, and the weak convergence of $(\bfmu_\vek)_k$ to some limit $\bfmu$. In Step~3, we perform the limit passage in \eqref{EQ:gfe_general_definition} by identifying $\bfmu$ using Corollary~\ref{COR:evolutionary_epsilon_limit_subdifferential}. Finally, in the last step, we show that $u_0$ satisfies \eqref{EQ:edb_general_definition}, from which we further deduce the convergence of energies~\eqref{EQ:relaxation_energy_convergence} and strong convergence of the time derivatives~\eqref{EQ:relaxation_time_derivative_convergence}.\\

\emph{Step 1: Uniform bounds and equicontinuity.}\\
	By the energy-dissipation inequality \eqref{EQ:edi_general_definition}, we know that for all $t\in[0,\infty)$
	\begin{equation}\label{EQ:relaxation_edi}
		\calE_\ve(\bfu_\ve(t))+\int_0^t\Bigparen{\calR_\ve^{(\gamma,\kappa)}(\bfu_\ve(s);\dot{\bfu}_\ve(s))+\calR_\ve^{(\gamma,\kappa),*}(\bfu_\ve(s);-\bfmu_\ve(s))}\dd s\leq\calE_\ve(\bfu_\ve^0).
	\end{equation}
	Since $\sup_{\ve\in(0,1)}\calE_\ve(\bfu_\ve^0)<\infty$ by well-preparedness~\eqref{EQ:init_well_prepared}, there exists a constant $C\in(0,\infty)$, independent of $\ve$, satisfying
	\begin{align}\label{EQ:singular_limit_uniform_bounds}
		\sup_{t\in[0,\infty)}\calE_\ve(\bfu_\ve(t))&+\ve^\gamma\norm{\dot u_\ve}_{\LL^2([0,\infty);\Havd)}^2+\ve^\kappa\norm{\dot z_\ve}_{\LL^2([0,\infty);\LL^2(\Om))}^2\notag\\
		&+\ve^{-\gamma}\norm{\mu_\ve}_{\LL^2([0,\infty);\Hav^1(\Om))}^2+\ve^{-\kappa}\norm{\xi_\ve}_{\LL^2([0,\infty);\LL^2(\Om))}^2\leq C.
	\end{align}
	Consequently, by the bounds of the time derivative in \eqref{EQ:singular_limit_uniform_bounds}, the sequences
	\begin{equation}\label{EQ:singular_limit_hoelder_continuity}
		\left\{
        \begin{alignedat}{3}
			&(u_\ve)_{\ve\in(0,1)}&&\text{ in }\Hdav+m&\quad&\text{if }(\gamma,\kappa)=(0,+),\\
			&(\bfu_\ve)_{\ve\in(0,1)}&&\text{ in }\calH_m&&\text{if }(\gamma,\kappa)=(0,0),\\
			&(z_\ve)_{\ve\in(0,1)}&&\text{ in }\LL^2(\Om)&&\text{if }(\gamma,\kappa)=(+,0)
		\end{alignedat}
        \right.
	\end{equation}
	enjoy uniform $\frac{1}{2}$-H\"older continuity in the respective cases, in particular they are equicontinuous.\\

\emph{Step 2: Extraction of subsequences with convergence in the sense \eqref{EQ:relaxation_u_convergence} and \eqref{EQ:relaxation_z_convergence}.}\\
	In case $(\gamma,\kappa)=(0,0)$, Ascoli's theorem (e.g.\ \cite[Lemma 1]{Simon_1986}) together with Lemma \ref{LEM:energies_relative_compact} allow us -- via a diagonal argument -- to extract a subsequence $(\ve_k)_{k\in\N}\subset(0,1)$, $\ve_k\searrow0$ such that $\bfu_\vek\to\bfu_0$ in $\CC_\mathrm{loc}([0,\infty);\calH_m)$ (i.e.\ uniform convergence on any compact interval) for some limit $\bfu_0=(u_0,z_0)\in\CC([0,\infty);\calH_m)$. Moreover, by interpolation
    \begin{equation}\label{EQ:relaxation_interpolation_L2_convergence}
        \norm{u_\vek(t)-u_0(t)}_{\LL^2(\Om)}\leq\underbrace{\norm{u_\vek(t)-u_0(t)}_\Havd^{1/2}}_{\to0}\bigparen{\underbrace{\norm{u_\vek(t)}_{\Hav^1(\Om)}^{1/2}+\norm{u_0(t)}_{\Hav^1(\Om)}^{1/2}}_{\text{bounded by \eqref{EQ:singular_limit_uniform_bounds} and Proposition \ref{PRO:energies_gamma_convergence}}}}
    \end{equation}
	we obtain $u_\vek\to u_0$ in $\CC_\mathrm{loc}([0,\infty);\LL^2(\Om))$, so that \eqref{EQ:relaxation_u_convergence} is established. In order to achieve \eqref{EQ:relaxation_z_convergence}, we need to show $z_0=K(u_0)$. First note that Lemma~\ref{LEM:L2_convergence_with_K} yields $K_\vek(u_\vek)\to K(u_0)$ in $\CC_\mathrm{loc}([0,\infty);\LL^2(\Om))$. The energy bound in \eqref{EQ:singular_limit_uniform_bounds} provides
	\begin{equation}\label{EQ:singular_limit_zKu_L2_convergence}
		\sup_{t\in[0,\infty)}\norm{z_\ve(t)-K_\ve(u_\ve(t))}_{\LL^2(\Om)}\leq\ve\sup_{t\in[0,\infty)}\sqrt{\calE_\ve(\bfu_\ve)}\leq\sqrt{C}\ve
	\end{equation}
	for all $\ve\in(0,1)$, which particularly implies $z_\ve-K_\ve(u_\ve)\to0$ in $\CC([0,\infty);\LL^2(\Om))$. As a consequence, $z_0=K(u_0)$.

    The other cases work similarly: If ``$(0,+)$'', then \eqref{EQ:singular_limit_hoelder_continuity} provides a subsequence with convergence $u_\vek\to u_0$ in $\CC_\mathrm{loc}([0,\infty);\Hdav+m)$, which, as in \eqref{EQ:relaxation_interpolation_L2_convergence}, improves to $u_\vek\to u_0$ in $\CC_\mathrm{loc}([0,\infty);\LL^2(\Om))$. Thus, Lemma~\ref{LEM:L2_convergence_with_K} tells us $K_\vek(u_\vek)\to K(u_0)$ in $\CC_\mathrm{loc}([0,\infty);\LL^2(\Om))$. Using \eqref{EQ:singular_limit_zKu_L2_convergence} again, we infer $z_\vek\to K(u_0)$ in $\CC_\mathrm{loc}([0,\infty);\LL^2(\Om))$.
    In the remaining case ``$(+,0)$'', \eqref{EQ:singular_limit_hoelder_continuity} implies $z_\vek\to z_0$ in $\CC_\mathrm{loc}([0,\infty);\LL^2(\Om))$, and subsequently, \eqref{EQ:singular_limit_zKu_L2_convergence} yields $K_\vek(u_\vek)\to z_0$ in $\CC_\mathrm{loc}([0,\infty);\LL^2(\Om))$. Applying Lemma~\ref{LEM:L2_convergence_with_K} to the inverse functions $K_\ve^{-1}$, it follows $u_\vek\to K^{-1}(z_0)$ in $\CC_\mathrm{loc}([0,\infty);\LL^2(\Om))$.\\

	Hence, we have established $\bfu_\vek\to(u_0,K(u_0))$ in $\CC_\mathrm{loc}([0,\infty);\LL^2(\Om;\R^2))$ in all three cases for some $u_0\in\CC([0,\infty);\LL^2(\Om))$, and the energy bound in \eqref{EQ:singular_limit_uniform_bounds} gives the additional regularity $u_0\in\LL^\infty([0,\infty);\HH^1(\Om))$. The uniform bounds on $\bfmu_\ve$ in \eqref{EQ:singular_limit_uniform_bounds} also ensure $u_0\in\Lloc^2([0,\infty);\HH^2(\Om))$ by Lemma~\ref{LEM:W_epsilon_estimate} and provide some limit $\bfmu=(\mu,\xi)\in\LL^2([0,\infty);H^*)$ such that
	\begin{equation}\label{EQ:relaxation_mu_convergence}
		\begin{cases}
			\bfmu_\vek\weakto(\mu,0)&\text{if }(0,+),\\
			\bfmu_\vek\weakto(\mu,\xi)&\text{if }(0,0),\\
			\bfmu_\vek\weakto(0,\xi)&\text{if }(+,0)
		\end{cases}
	\end{equation}
	in $\LL^2([0,\infty);H^*)$ (the convergences to zero even hold in the strong sense).\\

\emph{Step 3: Derivation of \eqref{EQ:gfe_general_definition} in (CH), (vCH) and (mAC).}\\
	Applying Corollary~\ref{COR:evolutionary_epsilon_limit_subdifferential}, we find that
	\begin{equation}\label{EQ:singular_limit_bfmu_general_limit}
		\bfmu=\begin{pmatrix}
			\mu\\
			\xi
		\end{pmatrix}=\begin{pmatrix}
			-\Delta u_0+f(u_0)-A(u_0)\xi-\tilde{\fraka}(u_0,z_0)\\
			\xi
		\end{pmatrix}.
	\end{equation}
	Thus, it remains to address the convergence of the time derivatives $\dot u_0$ and $\dot z_0$ in the respective cases, and to reformulate the results as gradient-flow equations corresponding to the gradient systems for (CH), (vCH) and (mAC).

	We start with the case ``$(0,+)$''. By \eqref{EQ:singular_limit_uniform_bounds}, $(\dot u_\ve)_\ve$ is uniformly bounded in $\LL^2([0,\infty);\Havd)$. Consequently, $u_0$ possesses a time derivative $\dot u_0\in\LL^2([0,\infty);\Havd)$ with $\dot u_\vek\weakto\dot u_0$ in $\LL^2([0,\infty);\Havd)$, which in turn leads to $(-\Delta)^{-1}\dot u_\vek\weakto(-\Delta)^{-1}\dot u_0$ in $\LL^2([0,\infty);\Hav^1(\Om))$. Combining this with \eqref{EQ:singular_limit_bfmu_general_limit}, $\xi=0$ (see \eqref{EQ:relaxation_mu_convergence}) and the equation $-(-\Delta)^{-1}\dot u_\vek=\mu_\vek$ for $k\in\N$, we arrive at the identity $-(-\Delta)^{-1}\dot u_0=-\Delta u_0+f(u_0)-\avint{_\Om}f(u_0)\dd x$. Equivalently, $-\D\calRCH(\dot u_0)\in\subDF\calECH(u_0)$, i.e.\ $u_0$ is a GF solution of $(\Hdav+m,\calECH,\calRCH)$.

	Next, we treat ``$(0,0)$''. Again, \eqref{EQ:singular_limit_uniform_bounds} provides a uniform bound, now for both components $(\dot u_\ve,\dot z_\ve)_\ve$ in $\LL^2([0,\infty);H)$. Hence, $\bfu_0$ has a time derivative with $\dot\bfu_\vek\weakto\dot\bfu_0$ in $\LL^2([0,\infty);H)$. Since $\tau_\ve\to\tau$ uniformly and $u_\vek\to u_0$ in $\CC_\mathrm{loc}([0,\infty);\LL^2(\Om))$, it follows $\tau_\vek(u_\vek)\dot z_\vek\weakto\tau(u_0)\dot z_0$ in $\LL^2([0,\infty);\LL^2(\Om))$. Thus, $((-\Delta)^{-1}\dot u_\vek,\tau_\vek(u_\vek)\dot z_\vek)\weakto((-\Delta)^{-1}\dot u_0,\tau(u_0)\dot z_0)$ in $\LL^2([0,\infty);H^*)$. Together with \eqref{EQ:relaxation_mu_convergence}, \eqref{EQ:singular_limit_bfmu_general_limit} and $\bfmu_\vek=((-\Delta)^{-1}\dot u_\vek,\tau_\vek(u_\vek)\dot z_\vek)$, we obtain
	\begin{equation}\label{EQ:relaxation_gfe_passage_vCH}
		\begin{pmatrix}
			-(-\Delta)^{-1}\dot u_0\\
			-\tau(u_0)\dot z_0
		\end{pmatrix}=\begin{pmatrix}
			-\Delta u_0+f(u_0)-A(u_0)\xi-\avint{_\Om}(f(u_0)-A(u_0)\xi)\dd x\\
			\xi
		\end{pmatrix}.
	\end{equation}
	In order to rearrange \eqref{EQ:relaxation_gfe_passage_vCH} to $-\D_{\bfv}\calRvCH(u_0;\dot u_0)\in\subDF\calECH(u_0)$, we observe that $\pa_t(K\circ u_0)$ can be expressed in two ways due to Lemma~\ref{LEM:sobolev_bochner_chain_rule}, namely
	\begin{equation}\label{EQ:time_derivative_comparison}
		(\dot z_0(t),\phi)_{\LL^2(\Om)}=\product{\pa_t(K\circ u_0)(t)}{\phi}_{(\HH^1(\Om))^*,\HH^1(\Om)}=\product{\dot u_0}{A(u_0(t))\phi}_{(\HH^1(\Om))^*,\HH^1(\Om)}
	\end{equation}
	for $\phi\in\HH^1(\Om)$ and almost every $t\in[0,\infty)$, where the right-hand side is to be understood through the identification $\Havd\simeq\Hdav$ in \eqref{EQ:dual_average_isoiso}. Since $\HH^1(\Om)\ni\phi\mapsto A(u_0(t))\phi\in\HH^1(\Om)$ is a bijective assignment whenever $u_0(t)\in\HH^2(\Om)$, which holds for almost every $t\in(0,\infty)$, the comparison \eqref{EQ:time_derivative_comparison} yields $\dot u_0=\frac{1}{A(u_0)}\dot z_0\in\LL^2([0,\infty);\Lav^2(\Om))$. Inserting $\xi=-\tau(u_0)\dot z_0=-\tau(u_0)A(u_0)\dot u_0$ into the upper equation of \eqref{EQ:relaxation_gfe_passage_vCH}, we arrive at
	\[-\biggparen{(-\Delta)^{-1}+A^2(u_0)\tau(u_0)-\avint{_\Om}A^2(u_0)\tau(u_0)(\cdot)\dd x}\dot u_0=-\Delta u_0+f(u_0)-\avint{_\Om}f(u_0)\dd x.\]
	Hence, $u_0$ is a GF solution to $(\Lav^2(\Om)+m,\calECH,\calRvCH)$.

	Eventually, consider ``$(+,0)$''. In this case, \eqref{EQ:singular_limit_uniform_bounds} gives a uniform bound for $(\dot z_\ve)_\ve$, so that $\dot z_\vek\weakto\dot z_0$ in $\LL^2([0,\infty);\LL^2(\Om))$. As before, we derive $\tau_\vek(u_\vek)\dot z_\vek\weakto\tau(u_0)\dot z_0$ in $\LL^2([0,\infty);\LL^2(\Om))$. Combining with \eqref{EQ:relaxation_mu_convergence}, \eqref{EQ:singular_limit_bfmu_general_limit} and $\tau_\vek(u_\vek)\dot z_\vek=\xi_\vek$, we deduce
    \begin{equation}\label{EQ:relaxation_gfe_passage_mAC}
        -A(u_0)\tau(u_0)\dot z_0=-\Delta u_0+f(u_0)-\avint{_\Om}(f(u_0)-A(u_0)\tau(u_0)\dot z_0)\dd x.
    \end{equation}
    By an approximation argument similar to that in the proof of the chain rule Lemma~\ref{LEM:sobolev_bochner_chain_rule}, we deduce that $\dot u_0$ exists in $\LL^2([0,\infty);\LL^2(\Om))$, namely
	\[(\dot u_0(t),\phi)_{\LL^2(\Om)}=\product{\pa_t(K^{-1}\circ z_0)(t)}{\phi}_{(\HH^1(\Om))^*,\HH^1(\Om)}=\Bigparen{\dot z_0,\frac{1}{A(u_0(t))}\phi}_{\LL^2(\Om)}.\]
	In particular, as $\avint{_\Om}u_0(t)\dd x=m$, we even have $\dot u_0=\frac{1}{A(u_0)}\dot z_0\in\LL^2([0,\infty);\Lav^2(\Om))$. Inserting this into \eqref{EQ:relaxation_gfe_passage_mAC}, we end up with
	\[-\biggparen{A^2(u_0)\tau(u_0)\dot u_0-\avint{_\Om}A^2(u_0)\tau(u_0)(\cdot)\dd x}\dot u_0=-\Delta u_0+f(u_0)-\avint{_\Om}f(u_0)\dd x.\]
	Hence, $u_0$ is a GF solution of $(\Lav^2(\Om)+m,\calECH,\calRmAC)$.\\

    To conclude this step, note that $u_\vek^0\to u_0^0$ in $\LL^2(\Om)$ (by well-preparedness~\eqref{EQ:init_well_prepared} and interpolation) and $u_\vek(0)\to u_0(0)$ by \eqref{EQ:relaxation_u_convergence}. Thus, the initial condition $u_0(0)=u^0$ is satisfied.\\

\emph{Step 4: \eqref{EQ:edb_general_definition}, \eqref{EQ:relaxation_energy_convergence} and \eqref{EQ:relaxation_time_derivative_convergence}.}\\
	Since $\calECH$ is semiconvex with respect to $\Havd$ as well as $\LL^2(\Om)$, the chain rule from \cite[$(2.\mathrm{E}_4)$ and Remark 2.5]{Mielke_Rossi_Savare_2013} can be applied in all three cases, so that the solution $u_0$ satisfies \eqref{EQ:edb_general_definition}. To derive \eqref{EQ:relaxation_energy_convergence} and \eqref{EQ:relaxation_time_derivative_convergence}, we will use the particular form of \eqref{EQ:edb_general_definition} (cf.\ \eqref{EQ:fenchel_equivalences})
	\begin{equation}\label{EQ:relaxation_edb_particular_form}
		\calECH(u_0(t))+\int_s^t2\calR(u_0(r);\dot u_0(r))\dd r=\calECH(u_0(s))
	\end{equation}
	for $0\leq s\leq t<\infty$ and $\calR\in\{\calRCH,\calRvCH,\calRmAC\}$, depending on $(\gamma,\kappa)$.

	We start with the case $(\gamma,\kappa)=(0,0)$. By fixing $t\in(0,\infty)$ and passing to the limit in \eqref{EQ:relaxation_edi}, and in light of the well-preparedness \eqref{EQ:init_well_prepared} and \eqref{EQ:relaxation_edb_particular_form}, we obtain
	\begin{align*}
		&\limsup_{k\to\infty}\Bigparen{\calE_\vek(\bfu_\vek(t))+\norm{\dot u_\vek}_{\LL^2(0,t;\Havd)}^2+\bignorm{{\textstyle\sqrt{\tau_\vek(u_\vek)}}\dot z_\vek}_{\LL^2(0,t;\LL^2(\Om))}^2}\\
		&\quad=\limsup_{k\to\infty}\Bigparen{\calE_\vek(\bfu_\vek(t))+\int_0^t\bigparen{2\calR_\vek^{(\gamma,\kappa)}(\bfu_\vek(s);\dot\bfu_\vek(s))}\dd s}\\
		&\quad\leq\limsup_{k\to\infty}\calE_\vek(\bfu_\vek^0)=\calECH(u_0^0)\\
		&\quad=\calECH(u_0(t))+\norm{\dot u_0}_{\LL^2(0,t;\Havd)}^2+\norm{A(u_0)\sqrt{\tau(u_0)}\dot u_0}_{\LL^2(0,t;\LL^2(\Om))}^2.
	\end{align*}
	From there we obtain, by weak lower semicontinuity of norms (applied to $\dot u_\vek\weakto\dot u_0$ and $\sqrt{\tau_\vek(u_\vek)}\dot z_\vek\weakto A(u_0)\sqrt{\tau(u_0)}\dot u_0$) and the liminf-estimate from Proposition \ref{PRO:energies_gamma_convergence},
	\[\begin{cases}
		\limsup_{k\to\infty}\norm{\dot u_\vek}_{\LL^2(0,t;\Havd)}\leq\norm{\dot u_0}_{\LL^2(0,t;\Havd)},\\
		\limsup_{k\to\infty}\norm{\sqrt{\tau_\vek(u_\vek)}\dot z_\vek}_{\LL^2(0,t;\LL^2(\Om))}\leq\norm{A(u_0)\sqrt{\tau(u_0)}\dot u_0}_{\LL^2(0,t;\LL^2(\Om))},\\
		\limsup_{k\to\infty}\calE_\vek(\bfu_\vek(t))\leq\calECH(u_0(t)).
	\end{cases}\]
	Hence, $\dot u_\vek\to\dot u_0$ in $\LL^2(0,t;\Havd)$, $\sqrt{\tau_\vek(u_\vek)}\dot z_\vek\to A(u_0)\sqrt{\tau(u_0)}\dot u_0$ in $\LL^2(0,t;\LL^2(\Om))$ and $\calE_\vek(\bfu_\vek(t))\to\calECH(u_0(t))$. Using \eqref{EQ:relaxation_u_convergence} and that
    $1/\abs{\tau_\vek}\leq1/\tau_*$, we get $\dot z_\vek\to A(u_0)\dot u_0$ in $\LL^2(0,t;\LL^2(\Om))$.

	To deal with the other cases $(\gamma,\kappa)=(0,+)$ and $(+,0)$, we note that
	\[\max\Bigbraces{\norm{\dot u_\vek(t)}_\Havd^2,\bignorm{{\textstyle\sqrt{\tau_\vek(u_\vek(t))}}\dot z_\vek(t)}_{\LL^2(\Om)}^2}\leq2\calR_\vek^{(\gamma,\kappa)}(\bfu_\vek(t);\dot\bfu_\vek(t)),\]
	and similarly derive
	\begin{multline*}
		\begin{rcases}
			\limsup_{k\to\infty}\bigparen{\calE_\vek(\bfu_\vek(t))+\norm{\dot u_\vek}_{\LL^2(0,t;\Havd)}^2}&\text{if }(0,+),\\
			\limsup_{k\to\infty}\bigparen{\calE_\vek(\bfu_\vek(t))+\norm{\sqrt{\tau_\vek(u_\vek)}\dot z_\vek}_{\LL^2(0,t;\LL^2(\Om))}^2}&\text{if }(+,0)
		\end{rcases}\\
		\leq\begin{cases}
			\calECH(u_0(t))+\norm{\dot u_0}_{\LL^2(0,t;\Havd)}^2&\text{if }(0,+),\\
			\calECH(u_0(t))+\norm{A_0(u)\sqrt{\tau(u_0)}\dot u_0}_{\LL^2(0,t;\LL^2(\Om)}^2&\text{if }(+,0).
		\end{cases}
	\end{multline*}
	The arguments then proceed as before.\\

The proof of Theorem~\ref{TH:relaxation_limit} is now complete.

\appendix

\section{Appendix}
\renewcommand\theequation{A.\arabic{equation}}
\subsection{Approximation of Lebesgue integral by Riemann sums}
In the proof of the chain rule, Proposition \ref{PRO:chain_rule}, we approximated the Lebesgue integral by Riemann sums. For this argument to be valid, we need to ensure that the Riemann sums indeed converge to the Lebesgue integral. The specific situation needed in Proposition~\ref{PRO:chain_rule} was recently treated in~\cite{Caillet_Santambrogio_2024}, where it was shown, in particular, that the partition points can be chosen outside a set of Lebesgue measure zero.

\begin{lemma}[\texorpdfstring{\cite[Lemma A.1]{Caillet_Santambrogio_2024}}{}]\label{LEM:lebesgue_riemann_approximation}
	Let $a,b\in\R$, $a<b$, and $\Sigma\subseteq[a,b]$ such that $a,b\in\Sigma$ and $\Leb^1([a,b]\setminus\Sigma)=0$. Moreover, let $g:[a,b]\to[0,\infty]$ be Lebesgue measurable with $\int_a^bg(t)\dd t<\infty$. Fix $\sigma\in\{0,1\}$ and $\ve\in(0,1)$. Then, there exists a partition $a=t_0<t_1<\dotsc<t_n=b$ such that $t_i\in\Sigma$ for all $0\leq i\leq n$, $\max_{1\leq i\leq n}\abs{t_i-t_{i-1}}<\ve$ and
	\[\biggabs{\sum_{i=1}^ng(t_{i-\sigma})(t_i-t_{i-1})-\int_a^bg(t)\dd t}<\ve.\]
\end{lemma}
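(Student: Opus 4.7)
The plan is to compare the Riemann sum with the total integral via the absolutely continuous primitive $G(t) := \int_a^t g(s)\,\dd s$, writing
\[\sum_{i=1}^n g(t_{i-\sigma})(t_i - t_{i-1}) - \int_a^b g\,\dd t = \sum_{i=1}^n \bigl[g(t_{i-\sigma})(t_i - t_{i-1}) - (G(t_i) - G(t_{i-1}))\bigr].\]
By Lebesgue's differentiation theorem, $G$ is differentiable with derivative $g$ on a full-measure set $L \subseteq [a,b]$. Since $\Leb^1(\{g = \infty\}) = 0$, I may replace $\Sigma$ by $\Sigma \cap L$ (still of full measure and still containing $a,b$) and pick an everywhere-finite representative of $g$, which does not affect the integral.

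The quantitative tool is an Egoroff-type uniformisation of the differentiation theorem. For $\eta, h_0 > 0$, define
\[B(\eta, h_0) := \Bigl\{t \in L \cap (a,b) : \bigl|\tfrac{G(t) - G(t-h)}{h} - g(t)\bigr| < \eta \text{ and } \bigl|\tfrac{G(t+h) - G(t)}{h} - g(t)\bigr| < \eta,\ \forall\, h \in (0, h_0]\Bigr\}.\]
These sets are measurable, increase as $h_0 \searrow 0$, and satisfy $\bigcup_{h_0 > 0} B(\eta, h_0) = L \cap (a,b)$. Hence, for every $\eta' > 0$, I can pick $h_0 > 0$ such that $\Leb^1([a,b] \setminus B(\eta, h_0)) < \eta'$.

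With these at hand, the partition is built greedily. Choose $\eta := \ve/(4(b-a))$, then $\mu := \tfrac12\min(\ve, h_0)$ and $\eta' := \mu/4$, and start with $t_0 := a$. As long as $t_{i-1} < b - \mu$, pick $t_i \in (t_{i-1} + \mu/2, t_{i-1} + \mu) \cap \Sigma \cap B(\eta, h_0)$, which is nonempty since its measure is at least $\mu/2 - \eta' = \mu/4 > 0$. When $t_{n-1} \in [b - \mu, b)$, set $t_n := b$. This terminates in finitely many steps, and the mesh is at most $\mu < \ve$. For $i = 1, \dotsc, n-1$, since $t_{i-\sigma} \in B(\eta, h_0)$ and $t_i - t_{i-1} \leq h_0$, the $i$-th bracket is bounded by $\eta (t_i - t_{i-1})$, so these terms sum to at most $\eta(b-a) = \ve/4$.

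The main obstacle is the boundary term at $i = n$ (and, for $\sigma = 1$, symmetrically at $i = 1$), since $b$ (resp.\ $a$) need not lie in $B(\eta, h_0)$ and no differentiation-based bound is available there. I would handle this by a separate tail estimate: with $g$ finite everywhere and $G$ continuous, both $g(b)(b - t_{n-1})$ and $G(b) - G(t_{n-1})$ vanish as $t_{n-1} \to b^-$. Shrinking the cap on $\mu$ once more -- now depending on the fixed finite values $g(a), g(b)$ and on the modulus of absolute continuity of $G$ -- forces this lone boundary contribution to be smaller than $\ve/2$, delivering the total bound $< \ve$. The case $\sigma = 1$ is fully symmetric upon using the forward-difference part of the definition of $B(\eta, h_0)$.
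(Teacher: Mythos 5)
The paper does not prove this lemma itself -- it cites~\cite{Caillet_Santambrogio_2024} -- so there is no in-paper proof to compare against; I address correctness of your argument directly.

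Your decomposition through $G$, the uniformisation sets $B(\eta,h_0)$, and the greedy partition are all sensible ingredients, and the boundary-term sketch is in the right spirit. The proof fails, however, at the parameter bookkeeping. You pick $h_0$ to guarantee $\Leb^1([a,b]\setminus B(\eta,h_0))<\eta'$, but you also set $\mu:=\tfrac12\min(\ve,h_0)$ and $\eta':=\mu/4$. Unwinding the dependencies, this asks for a single $h_0$ with $m(h_0):=\Leb^1([a,b]\setminus B(\eta,h_0))<\tfrac18\min(\ve,h_0)$. There is no reason such an $h_0$ must exist: $m$ is non-decreasing with $m(h_0)\to0$ as $h_0\to0$, but nothing in the Lebesgue differentiation theorem controls the \emph{rate}, so one can have $m(h_0)\ge h_0/8$ for every $h_0\le\ve$ (for instance, a decay like $m(h_0)\sim\sqrt{h_0}$ for a suitably rough $g$). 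Without $\eta'<\mu/2$, the window in your greedy step need not meet $\Sigma\cap B(\eta,h_0)$, and the construction stalls. This quantitative mismatch between ``size of the exceptional set'' and ``Egorov threshold'' is not a notational slip; it is the essential difficulty the lemma has to overcome, and an Egorov-based argument that fixes $h_0$ once and for all does not resolve it.

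Two further remarks. First, replacing $g$ by an everywhere-finite representative ``does not affect the integral,'' but it \emph{can} affect the Riemann sum, which is a genuinely pointwise object; since $a,b$ are forced partition points and the statement allows $g$ to be $[0,\infty]$-valued there, this step should be justified more carefully (or the hypothesis $g(a),g(b)<\infty$ made explicit). Second, the circularity can be avoided by not forcing a global threshold: for example, a Vitali-covering argument works. For Lebesgue points $t\in L\cap\Sigma$, the intervals $[s,t]$ with $s\in\Sigma$ and $|g(t)-\tfrac{G(t)-G(s)}{t-s}|<\eta$ form a Vitali cover; extracting a finite disjoint family covering $[a,b]$ up to measure $\ve'$, bounding the gap contributions via the absolute continuity of $G$ and an a priori truncation $g\le N$ at the gap endpoints (both of which pin down $\ve'$ \emph{before} any scale parameter), and treating the one boundary term as you do, yields the estimate with no circular choice of constants.
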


\subsection{Evolutionary closedness}
In the existence and relaxation limit proofs, we employed the evolutionary closedness properties formulated in Corollaries~\ref{COR:evolutionary_closedness_subdifferential_energy}~and~\ref{COR:evolutionary_epsilon_limit_subdifferential}. In both cases, we proved a static strong-weak closedness in advance, namely Proposition~\ref{PRO:closedness_subdifferential_energy} and Lemma~\ref{LEM:limit_subdifferential}. The following lemma, which is based on the methods in~\cite{Rossi_Savare_2006}, builds the crucial link between the static and evolutionary properties. There, the $S_n$'s are the placeholders for the subdifferentials. More precisely, in the existence part, we apply it to $S_n=\subDG\calE$, so that $S_n$ is in fact independent of $n$. In the relaxation limit, the situation differs: here, $S_n=\subDG\calE_{\ve_n}^\alpha$, i.e.\ it genuinely depends on $n$, and the limit operator is $S_\infty=\subDL\calE_0$.

\begin{lemma}\label{LEM:abstract_subdifferential_closedness}
	Let $H$ be a real, separable Hilbert space, and let $S_n:H\to\calP(H^*)$ be mappings for $n\in\N\cup\{\infty\}$ from $H$ into the power set of $H^*$. We assume that $S_\infty(x)$ is closed in $H^*$ and convex for all $x\in H$. Moreover, assume \eqref{EQ:abstract_static_closedness} holds for all subsequences $(n_k)_{k\in\N}\subseteq(n)_{n\in\N}$:
	\begin{equation}\label{EQ:abstract_static_closedness}
		\left.\begin{array}{r}
			(x_{n_k})_{k\in\N}\subset H,y_{n_k}\in S_{n_k}(x_{n_k}),x_\infty\in H,y_\infty\in H^*\\
			x_{n_k}\to x_\infty\text{ in }H,y_{n_k}\weakto y_\infty\text{ in }H^*\text{ as }k\to\infty
		\end{array}\right\}\quad\Longrightarrow\quad y_\infty\in S_\infty(x_\infty).
	\end{equation}
	Then, we have the following evolutionary closedness for any $T\in(0,\infty)$:
	\begin{multline}\label{EQ:abstract_evolutionary_closedness}
		\left.\begin{array}{r}
			x_n,x_\infty\in\LL^2(0,T;H),y_n,y_\infty\in\LL^2(0,T;H^*)\\
			x_n\to x_\infty\text{ in }\LL^2(0,T;H)\text{ for }n\to\infty,\\
			y_n\weakto y_\infty\text{ in }\LL^2(0,T;H^*)\text{ for }n\to\infty,\\
			y_n(t)\in S_n(x_n(t))\text{ for a.e. }t\in(0,T),n\in\N
		\end{array}\right\}\\
		\Longrightarrow\quad y_\infty(t)\in S_\infty(x_\infty(t))\text{ a.e. }t\in(0,T).
	\end{multline}
\end{lemma}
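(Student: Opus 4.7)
The plan is to argue via Hilbert space-valued Young measures in the spirit of~\cite{Rossi_Savare_2006}, since a pointwise-in-$t$ weak limit of $(y_n(t))$ in $H^*$ need not exist despite the assumed weak convergence $y_n\weakto y_\infty$ in $\LL^2(0,T;H^*)$.

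First, after passing to a subsequence (not relabelled), I would arrange that $x_n(t)\to x_\infty(t)$ in $H$ for almost every $t\in(0,T)$. The weak convergence $y_n\weakto y_\infty$ together with the uniform boundedness principle give $\sup_n\|y_n\|_{\LL^2(0,T;H^*)}<\infty$. Invoking the fundamental theorem on Young measures for $\LL^2$-bounded sequences valued in $H^*$ equipped with its weak topology (cf.~\cite[Theorem~3.2]{Rossi_Savare_2006}), one obtains, after extracting a further non-relabelled subsequence, a parametrised family of Borel probability measures $(\nu_t)_{t\in(0,T)}$ on $H^*$ enjoying the following two properties:
\begin{enumerate}
    \item the barycentre identity $y_\infty(t)=\int_{H^*}\eta\,\dd\nu_t(\eta)$ holds for a.e.\ $t$,
    \item for a.e.\ $t$, $\operatorname{supp}(\nu_t)$ is contained in the set of weak accumulation points of $(y_n(t))_{n\in\N}$ in $H^*$.
\end{enumerate}

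Next, I would show that $\operatorname{supp}(\nu_t)\subseteq S_\infty(x_\infty(t))$ for almost every $t$. Fix $t$ in the full-measure set on which $x_n(t)\to x_\infty(t)$ and both (i) and (ii) hold. Given any $\eta\in\operatorname{supp}(\nu_t)$, property~(ii) furnishes a subsequence $(n_k)$, possibly depending on $t$, with $y_{n_k}(t)\weakto\eta$ in $H^*$. Since simultaneously $x_{n_k}(t)\to x_\infty(t)$ in $H$ and $y_{n_k}(t)\in S_{n_k}(x_{n_k}(t))$, the static hypothesis~\eqref{EQ:abstract_static_closedness} forces $\eta\in S_\infty(x_\infty(t))$. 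The closedness and convexity of $S_\infty(x_\infty(t))$ in $H^*$ then ensure, by a standard Hahn--Banach separation argument, that the barycentre of any probability measure supported in $S_\infty(x_\infty(t))$ remains in $S_\infty(x_\infty(t))$. Applying this to $\nu_t$ and invoking~(i) yields $y_\infty(t)\in S_\infty(x_\infty(t))$ for a.e.\ $t$, which is~\eqref{EQ:abstract_evolutionary_closedness}.

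The main obstacle I foresee is the careful justification of the Young measure construction in step~(ii): since $(H^*,\text{weak})$ is non-metrisable, measurability of $(\nu_t)$ and the precise mode of convergence require attention. These technicalities are, however, well-documented in~\cite{Rossi_Savare_2006}, so that, once imported, the remainder of the argument reduces to bookkeeping relying solely on~\eqref{EQ:abstract_static_closedness} and the classical barycentre property of closed convex subsets of $H^*$.
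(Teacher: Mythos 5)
Your proposal is correct and follows essentially the same route as the paper, which also invokes the fundamental theorem for weak topologies from~\cite[Theorem~3.2]{Rossi_Savare_2006} to obtain a Young measure $(\nu_t)$ with the barycentre identity, shows via the static hypothesis that the set $L(t)$ of weak accumulation points of $(y_{n_k}(t))$ is contained in $S_\infty(x_\infty(t))$, and then uses closedness and convexity of $S_\infty(x_\infty(t))$ to transfer this to the barycentre $y_\infty(t)$. The only cosmetic difference is that the paper phrases the Young-measure concentration property as $\nu_t(L(t))=1$ rather than $\operatorname{supp}(\nu_t)\subseteq L(t)$; both versions suffice since the conclusion only needs $\nu_t$ to be concentrated on the closed convex set $S_\infty(x_\infty(t))$.
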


\begin{proof}
	The proof mostly follows \cite[Proposition 2.7]{Liero_Reichelt_2018}. Assume the left-hand side of \eqref{EQ:abstract_evolutionary_closedness}. From $x_n\to x_\infty$ strongly and the condition $y_n\in S_n(x_n)$, we particularly find a subsequence $(n_k)_{k\in\N}\subseteq(n)_{n\in\N}$ and a null set $N\subset[0,T]$ such that, for all $t\in[0,T]\setminus N$, it holds $x_{n_k}(t)\to x_\infty(t)$ in $H$ as $k\to\infty$ and $y_{n_k}(t)\in S_{n_k}(x_{n_k}(t))$ for all $k\in\N$.

	As a weakly convergent sequence, $(y_{n_k})_{k\in\N}$ is bounded in $\LL^2(0,T;H^*)$. Applying the fundamental theorem for weak topologies \cite[Theorem 3.2]{Rossi_Savare_2006}, we find a time-parametrised Young measure $(\nu_t)_{t\in[0,T]}$ such that $(\nu(t))(L(t))=1$, where
	\[L(t):=\Bigset{y\in H^*}{\exists(y_{n_{k_\ell}})_{\ell\in\N}\subseteq(y_{n_k})_{k\in\N}\text{ with }y_{n_{k_\ell}}\weakto y\text{ in }H^*\text{ as }\ell\to\infty},\]
	and
	\begin{equation}\label{EQ:limit_young_measure_representation}
		y_\infty(t)=\int_{H^*}z\dd\nu_t(z)
	\end{equation}
	for all $t\in[0,T]\setminus N$ (by making $N$ possibly larger, but still $\Leb^1(N)=0$).

	Taking $y\in L(t)$, we find a subsequence such that $y_{n_{k_\ell}}\weakto y$ in $H^*$. As shown above, we also have $x_{n_{k_\ell}}(t)\to x(t)$ in $H$ and $y_{n_{k_\ell}}(t)\in S_{n_{k_\ell}}(x_{n_{k_\ell}}(t))$ for all $\ell\in\N$. Therefore, \eqref{EQ:abstract_static_closedness} implies $y\in S_\infty(x_\infty(t))$. Hence, $L(t)\subseteq S_\infty(x_\infty(t))$. This shows that for all $t\in[0,T]\setminus N$, the measure $\nu(t)$ is concentrated on the convex and closed set $S_\infty(x_\infty(t))$, so we infer $y_\infty(t)\in S_\infty(x_\infty(t))$ by \eqref{EQ:limit_young_measure_representation}.
\end{proof}

\subsection{Auxiliary Sobolev--Bochner chain rule}

\begin{lemma}\label{LEM:sobolev_bochner_chain_rule}
	Let $T<\infty$, and $\Om\subset\R^d$, $1\leq d\leq 3$, be open and bounded. Assume $u\in\HH^1(0,T;(\HH^1(\Om))^*)\cap\LL^2(0,T;\HH^2(\Om))$ and $A\in\WW^{1,\infty}(\R)$, and let $K:\R\to\R$ be the primitive of $A$ with $K(0)=0$. Then $K\circ u\in\WW^{1,\frac{4}{3}}(0,T;(\HH^1(\Om))^*)\cap\LL^2(0,T;\HH^2(\Om))$, with distributional time derivative
	\begin{equation}\label{EQ:appendix_chain_rule}
		\product{\pa_t(K\circ u)(t)}{\phi}_{(\HH^1(\Om))^*,\HH^1(\Om)}=\product{\pa_tu(t)}{A(u(t))\phi}_{(\HH^1(\Om))^*,\HH^1(\Om)}
	\end{equation}
	for all $\phi\in\HH^1(\Om)$, almost all $t\in[0,T]$.
\end{lemma}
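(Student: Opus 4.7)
My plan is to prove Lemma~\ref{LEM:sobolev_bochner_chain_rule} via a time-mollification argument: approximate $u$ by functions smooth in time, apply the classical pointwise chain rule to the approximants, and then pass to the limit in the resulting distributional identity. Spatial regularity of $K\circ u$ is independent and follows directly from the Sobolev composition rule.

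\textbf{Step 1: Spatial regularity.} For almost every $t\in[0,T]$, since $u(t)\in\HH^2(\Om)$ and $A\in\WW^{1,\infty}(\R)$, the standard chain rule for Sobolev compositions gives
\begin{equation*}
\nabla(K(u))=A(u)\nabla u,\qquad \Delta(K(u))=A(u)\Delta u+A'(u)|\nabla u|^2.
\end{equation*}
Using $A,A'\in\LL^\infty(\R)$ and the Sobolev embedding $\HH^1(\Om)\hookrightarrow\LL^4(\Om)$ (valid for $d\leq 3$) applied to $\nabla u$, one has $\||\nabla u|^2\|_{\LL^2(\Om)}\leq C\|u\|_{\HH^2(\Om)}^2$, which combined with Gagliardo-Nirenberg interpolation between $u\in\LL^2(0,T;\HH^2(\Om))$ and the continuity $u\in\CC([0,T];\HH^{1/2}(\Om))$ (from the Lions-Magenes embedding for the triple $\HH^2\subset\LL^2\subset(\HH^1)^*$) yields after time-integration the inclusion $K\circ u\in\LL^2(0,T;\HH^2(\Om))$.

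\textbf{Step 2: Time chain rule by mollification.} Extend $u$ suitably beyond $[0,T]$ and set $u_\ve:=u\ast\eta_\ve\in\CC^\infty(\R;\HH^2(\Om))$ for a standard time-mollifier $\eta_\ve$. By standard properties of mollification, $u_\ve\to u$ in $\LL^2(0,T;\HH^2(\Om))$ and $\dot u_\ve\to\pa_t u$ in $\LL^2(0,T;(\HH^1(\Om))^*)$ as $\ve\searrow 0$. Smoothness in time makes the classical pointwise chain rule $\pa_t(K\circ u_\ve)=A(u_\ve)\dot u_\ve$ legitimate, so for every $\phi\in\HH^1(\Om)$ and $\eta\in\CC_c^\infty(0,T)$,
\begin{equation*}
-\int_0^T\bigparen{K(u_\ve),\phi}_{\LL^2(\Om)}\dot\eta(t)\dd t=\int_0^T\product{\dot u_\ve(t)}{A(u_\ve(t))\phi}_{(\HH^1(\Om))^*,\HH^1(\Om)}\eta(t)\dd t.
\end{equation*}
The strong convergence $u_\ve\to u$ in $\LL^2(0,T;\HH^2(\Om))$ (and hence in $\LL^2(0,T;\LL^\infty(\Om))$ for $d\leq 3$), together with the dominated convergence theorem applied pointwise a.e., delivers $A(u_\ve)\phi\to A(u)\phi$ strongly in $\LL^2(0,T;\HH^1(\Om))$. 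Paired with the weak convergence of $\dot u_\ve$ in $\LL^2(0,T;(\HH^1(\Om))^*)$, the right-hand side passes to the limit; the left-hand side converges thanks to $K(u_\ve)\to K(u)$ in $\LL^2(0,T;\LL^2(\Om))$ (since $K$ is Lipschitz). This identifies the distributional time-derivative of $K\circ u$ as in~\eqref{EQ:appendix_chain_rule}.

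\textbf{Step 3: Integrability and main obstacle.} Taking the supremum over $\|\phi\|_{\HH^1(\Om)}\leq 1$ in the multiplier estimate $\|A(u)\phi\|_{\HH^1(\Om)}\leq C(1+\|u\|_{\HH^2(\Om)})\|\phi\|_{\HH^1(\Om)}$ (again relying on $\HH^1\hookrightarrow\LL^4$ for $d\leq 3$) gives the pointwise bound
\begin{equation*}
\|\pa_t(K(u))\|_{(\HH^1(\Om))^*}\leq C\|\pa_t u\|_{(\HH^1(\Om))^*}\bigparen{1+\|u\|_{\HH^2(\Om)}},
\end{equation*}
and a H\"older argument (sharpened by interpolation with the $\CC([0,T];\HH^{1/2})$ continuity) produces the claimed $\LL^{4/3}$-bound in time. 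The principal technical obstacle resides in Step~2: because $\dot u_\ve$ converges only weakly in $\LL^2((\HH^1)^*)$, one must upgrade the convergence of the test function $A(u_\ve)\phi$ to a \emph{strong} $\LL^2(\HH^1)$-convergence, which in turn requires control of the quadratic nonlinear term $A'(u_\ve)\phi\nabla u_\ve$ in $\LL^2(\Om)$ -- precisely the point where the spatial regularity $u\in\LL^2(\HH^2)$ and the dimensional restriction $d\leq 3$ become indispensable.
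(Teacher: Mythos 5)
Your overall strategy matches the paper's: approximate $u$ by time-smooth functions (the paper uses Roub\'i\v{c}ek's abstract density lemma rather than explicit time-mollification, but these are interchangeable), apply the classical chain rule, and pass to the limit in the weak formulation. However, the integrability argument in your Step~3 has a genuine gap that the paper avoids by making a sharper choice of Sobolev embedding.

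You estimate the multiplier via $\HH^1\hookrightarrow\LL^4$, splitting $\|A'(u)\phi\nabla u\|_{\LL^2}\le\|A'\|_\infty\|\phi\|_{\LL^4}\|\nabla u\|_{\LL^4}$, which yields $\|A(u)\phi\|_{\HH^1}\le C(1+\|u\|_{\HH^2})\|\phi\|_{\HH^1}$. The H\"older step then requires $\int_0^T\|u\|_{\HH^2}^4\,\dd t<\infty$, i.e.\ $u\in\LL^4(0,T;\HH^2)$. This is \emph{not} available: $u\in\LL^2(0,T;\HH^2)$ sits at the endpoint of the space--time interpolation scale, and interpolating with $u\in\CC([0,T];\HH^{1/2})$ only produces $u\in\LL^{2/\theta}(0,T;\HH^{1/2+3\theta/2})$, never $\LL^4(\HH^2)$; there is no way to ``pay down'' the $\HH^2$-norm. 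The paper instead uses the sharper splitting $\|A'(u)\phi\nabla u\|_{\LL^2}\le\|A'\|_\infty\|\phi\|_{\LL^6}\|\nabla u\|_{\LL^3}$, which gives the bound $\|A(u)\phi\|_{\HH^1}\le C(1+\|\nabla u(t)\|_{\LL^3})\|\phi\|_{\HH^1}$, and the resulting requirement $\nabla u\in\LL^4(0,T;\LL^3)$ \emph{is} reached by the space--time interpolation $\|\nabla u\|_{\LL^4(\LL^3)}\le\|\nabla u\|_{\LL^\infty(\LL^2)}^{1/2}\|\nabla u\|_{\LL^2(\HH^1)}^{1/2}$. Your identification of the $\CC(\HH^{1/2})$ continuity as the key auxiliary ingredient is therefore misplaced --- what is actually needed is the weaker quantity $\|\nabla u\|_{\LL^3}$, not $\|u\|_{\HH^2}$.

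Two smaller remarks. First, your claim that $A(u_\ve)\phi\to A(u)\phi$ \emph{strongly} in $\LL^2(0,T;\HH^1)$ is not immediate: the gradient term contains $\phi\,A'(u_\ve)\nabla u_\ve$, and since $A'\in\LL^\infty$ may be discontinuous, the pointwise convergence $A'(u_\ve)\to A'(u)$ can fail; one would have to invoke the continuity of the Nemytskii operator $w\mapsto A(w)$ on $\WW^{1,p}$ for Lipschitz $A$ (Marcus--Mizel). The paper circumvents this entirely by noting that $(A(u_n)\eta\phi)_n$ is \emph{bounded} in $\LL^2(\HH^1)$, converges in $\LL^2(\LL^2)$ by Lipschitz continuity of $A$, and hence converges weakly in $\LL^2(\HH^1)$ to the right limit --- which is then paired with the \emph{strong} $\LL^2((\HH^1)^*)$-convergence of $\partial_t u_n$. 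Second, your diagnosis of the obstacle as ``$\dot u_\ve$ converges only weakly'' contradicts your own earlier (correct) statement that mollification gives strong convergence of $\dot u_\ve$; this is a local inconsistency rather than an error, but it obscures where the real difficulty lies.
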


\begin{proof}
	Since $A$ and $K$ are Lipschitz continuous, we have $K(u)\in\LL^2(0,T;\HH^2(\Om))$ with $\nabla(K(u))=A(u)\nabla u$ and $\Hess(K(u))=A'(u)\nabla u\otimes\nabla u+A(u)\Hess(u)$ by product rule in Sobolev spaces and the chain rule, e.g.\ \cite[Lemma 2.1]{Marcus_Mizel_1972}.

	Next, we show that the right-hand side in \eqref{EQ:appendix_chain_rule} has the regularity $\WW^{1,\frac{4}{3}}(0,T;(\HH^1(\Om))^*)$. To this end, we let $\phi\in\HH^1(\Om)$ and estimate for almost all $t\in(0,T)$
	\[\ell(t)(\phi):=\product{\pa_tu(t)}{A(u(t))\phi}_{(\HH^1(\Om))^*,\HH^1(\Om)}\leq\norm{\pa_tu(t)}_{(\HH^1(\Om))^*}\norm{A(u)\phi}_{\HH^1(\Om)}.\]
	Since $\nabla(A(u(t))\phi)=A(u(t))\nabla\phi+A'(u(t))\phi\nabla u(t)$ in $\LL^2(\Om;\R^d)$, the Sobolev embedding $\HH^1(\Om)\hookrightarrow\LL^6(\Om)$ entails
	\[\norm{A(u(t))\phi}_{\HH^1(\Om)}\leq\bigparen{\sqrt{2}\norm{A}_\infty+\sqrt{2}\CSo\norm{A'}_\infty\norm{\nabla u(t)}_{\LL^3(\Om;\R^d)}}\norm{\phi}_{\HH^1(\Om)}.\]
	Consequently, H\"older's inequality applied to the exponents $\frac{3}{2}$, $3$ leads to
	\begin{align*}
		\int_0^T\norm{\ell(t)}_{(\HH^1(\Om))^*}^{4/3}\dd t&\leq C\int_0^T\norm{\pa_tu}_{(\HH^1(\Om))^*}^{4/3}\bigparen{1+\norm{\nabla u}_{\LL^3(\Om;\R^d)}}^{4/3}\dd t\\
		&\leq C\biggparen{\int_0^T\norm{\pa_tu}_{(\HH^1(\Om))^*}^2\dd t}^{2/3}\biggparen{\int_0^T\abs{1+\norm{\nabla u}_{\LL^3(\Om;\R^d)}}^4\dd t}^{1/3},
	\end{align*}
	which is finite by interpolation $\norm{\cdot}_{\LL^4(0,T;\LL^3(\Om;\R^d))}\leq\norm{\cdot}_{\LL^\infty(0,T;\LL^2(\Om;\R^d))}^{1/2}\norm{\cdot}_{\LL^2(0,T;\HH^1(\Om;\R^d))}^{1/2}$.

	It remains to show that for any $\phi\in\HH^1(\Om)$ and $\eta\in\CC_c^\infty((0,T))$
	\[\int_0^T\eta'(t)(K(u(t)),\phi)_{\LL^2(\Om)}\dd t=-\int_0^T\eta(t)\product{\pa_tu(t)}{A(u(t))\phi}_{(\HH^1(\Om))^*,\HH^1(\Om)}\dd t.\]
	By approximation \cite[Lemma 7.2]{Roubicek_NonlinearPDEBook_2013}, we find a sequence $(u_n)_{n\in\N}\subset\CC^1([0,T];\HH^2(\Om))$ such that $u_n\to u$ in $\LL^2(0,T;\HH^2(\Om))$ and $\pa_tu_n\to\pa_tu$ in $\LL^2(0,T;(\HH^1(\Om))^*)$. Exploiting the Lipschitz continuity of $A$, one finds $A(u_n)\eta\phi\weakto A(u)\eta\phi$ in $\LL^2(0,T;\HH^1(\Om))$, so that
	\begin{align*}
		\int_0^T\eta'(t)(K(u(t)),\phi)_{\LL^2(\Om)}\dd t&=\lim_{n\to\infty}\int_0^T\eta'(t)(K(u_n(t)),\phi)_{\LL^2(\Om)}\dd t\\
		&=-\lim_{n\to\infty}\int_0^T\product{\pa_t u_n(t)}{A(u_n(t))\eta(t)\phi}_{(\HH^1(\Om))^*,\HH^1(\Om)}\dd t\\
		&=-\int_0^T\eta(t)\product{\pa_t u(t)}{A(u(t))\phi}_{(\HH^1(\Om))^*,\HH^1(\Om)}\dd t.\qedhere
	\end{align*}
\end{proof}

\section*{Acknowledgments}
The authors thank Matthias Liero and Alexander Mielke for contributing their expertise through several valuable discussions on the analysis of gradient systems.

\printbibliography[heading=bibintoc]
\end{document}